\theoremstyle{plain}
 \newtheorem*{mainresult}{Main Result}
\newtheorem{theorem}{Theorem}[section]
\newtheorem{lemma}[theorem]{Lemma}
\newtheorem{assumption}[theorem]{Assumption}
\theoremstyle{definition}
\newtheorem{remark}[theorem]{Remark}
\numberwithin{equation}{section}
\newcommand{\linspan}{\mathop{\rm span}\nolimits}
\newcommand{\rest}{\left.\kern-2\nulldelimiterspace\right|_}
\newcommand{\norm}[2]{\left|#1\right|_{#2}}
\newcommand{\dnorm}[2]{\left\|#1\right\|_{#2}}
\newcommand{\Id}{{\mathbf1}}
\newcommand{\indf}{1}
\newcommand{\ex}{\mathrm{e}}
\newcommand{\p}{\partial}
\newcommand{\ed}{\mathrm d}
\newcommand{\deltafun}{\bm\updelta}
\newcommand*{\Bigcdot}{\raisebox{-.25ex}{\scalebox{1.25}{$\cdot$}}}
\newcommand{\clA}{{\mathcal A}}
\newcommand{\clC}{{\mathcal C}}
\newcommand{\clD}{{\mathcal D}}
\newcommand{\clE}{{\mathcal E}}
\newcommand{\clG}{{\mathcal G}}
\newcommand{\clL}{{\mathcal L}}
\newcommand{\clM}{{\mathcal M}}
\newcommand{\clN}{{\mathcal N}}
\newcommand{\clU}{{\mathcal U}}
\newcommand{\clW}{{\mathcal W}}
\newcommand{\clX}{{\mathcal X}}
\newcommand{\clZ}{{\mathcal Z}}
\newcommand{\bbN}{{\mathbb N}}
\newcommand{\bbR}{{\mathbb R}}
\newcommand{\bbT}{{\mathbb T}}
\newcommand{\bfE}{{\mathbf E}}
\newcommand{\fkA}{{\mathfrak A}}
\newcommand{\fkE}{{\mathfrak E}}
\newcommand{\fkG}{{\mathfrak G}}
\newcommand{\fkI}{{\mathfrak I}}
\newcommand{\fkM}{{\mathfrak M}}
\newcommand{\fkN}{{\mathfrak N}}
\newcommand{\fkP}{{\mathfrak P}}
\newcommand{\fkX}{{\mathfrak X}}
\newcommand{\fkZ}{{\mathfrak Z}}
\newcommand{\rmD}{{\mathrm D}}
\newcommand{\bfy}{{\mathbf y}}
\newcommand{\rmd}{{\mathrm d}}
\newcommand{\rmf}{{\mathrm f}}
\newcommand{\fka}{{\mathfrak a}}
\newcommand{\fki}{{\mathfrak i}}
\newcommand{\fkm}{{\mathfrak m}}
\newcommand{\fkr}{{\mathfrak r}}
\newcommand{\fks}{{\mathfrak s}}
\newcommand{\fkw}{{\mathfrak w}}
\newcommand{\tte}{{\mathtt e}}
\newcommand{\ttr}{{\mathtt r}}
\newcommand{\ovlineC}[1]{\overline C_{\left[#1\right]}}
\definecolor{DarkBlue}{rgb}{0,0.08,0.45}
\definecolor{DarkRed}{rgb}{.65,0,0}
\definecolor{applegreen}{rgb}{0.55, 0.71, 0.0}
\newcounter{mymac@matlab}
\newcommand{\matlab}{MATLAB%
   \ifnum\value{mymac@matlab}<1%
   \textregistered%
   \setcounter{mymac@matlab}{1}%
   \fi%
  }
\begin{document}
\title{Dynamical observers for parabolic equations with spatial point measurements}
\author{S\'ergio S.~Rodrigues$^{\tt1,*}$}
\author{Dagmawi A. Seifu$^{\tt1}$}
\thanks{
\vspace{-1em}\newline\noindent
{\sc MSC2020}: 93C20, 93C50, 93B51, 93E10.
\newline\noindent
{\sc Keywords}: Exponential observer design, state estimation, nonautonomous semilinear parabolic equations, finite-dimensional output, delta distributions as sensors, continuous data assimilation
\newline\noindent
$^{\tt1}$ Johann Radon Institute for Computational and Applied Mathematics,
  \"OAW, Altenbergerstr. 69, 4040 Linz, Austria.
  \newline\noindent
  $^{*}$ Corresponding author.
\newline\noindent
{\sc Emails}:
{\small\tt   sergio.rodrigues@ricam.oeaw.ac.at, dagmawi.seifu@ricam.oeaw.ac.at}
 }

\begin{abstract}
An exponential Luenberger dynamical observer is proposed to estimate  the state of a general class of nonautonomous semilinear parabolic equations. The result can be applied to the case where the output is given by state measurements taken at a finite number of spatial points, that is, to the case where our sensors are a finite number of delta distributions. The output injection operator is explicit and the derivation of the main result involves the decomposition of the state space into a direct sum of two oblique components depending on the set of  sensors. Simulations are presented as an application to the Kuramoto--Sivashinsky models for flame propagation and fluid flow.
\end{abstract}

\maketitle

\pagestyle{myheadings} \thispagestyle{plain} \markboth{\sc S. S. Rodrigues and D.A. Seifu}
{\sc  Observer design for delta distribution sensors}


\section{Introduction}
We address the design of an observer for general semilinear parabolic-like equations. As an example of application, we shall consider the Kuramoto--Sivashinsky equation
\begin{subequations}\label{sys-KS-flame}
  \begin{align}
 &\tfrac{\p}{\p t}y_\ttr+\nu_2 \Delta^2 y_\ttr+\nu_1 \Delta  y_\ttr+\nu_0\tfrac{1}{2}\norm{\nabla y_\ttr}{\bbR^d}^2=f,\qquad\clG y_\ttr\rest{\p\Omega}=g,\\
 &w=\clZ_{S} y_\ttr,
\end{align}
which is a model for flame propagation. The state~$y_\ttr= y_\ttr(t,x)\in\bbR$ is defined for~$(t,x)\in(0,+\infty)\times\Omega$, in a bounded spatial domain~$\Omega\subset\bbR^d$, $d\in\{1,2,3\}$.
Above~$\nu_i>0$, $i\in\{0,1,2\}$, are positive constants; $f=f(t,x)$ and~$g=g(t,x)$ are external forces; the operator~$\clG$ imposes the boundary conditions on the boundary~$\p\Omega$ of~$\Omega$.

The initial state~$y_\ttr(0,x)\in W^{2,2}(\Omega)$ is assumed to be unknown and our goal is to obtain an estimate of the state~$y_\ttr(t,x)$, for time~$t>0$. For that we shall use the vector output~$w=\clZ_{S} y_\ttr\in \bbR^{S_\sigma}$ of a finite number~$S_\sigma$ of measurements. We are particularly interested in the case where the state measurements are taken at a finite subset of spatial points~$\{x^{S,1},x^{S,2},\dots,x^{S,S_\sigma}\}\subset\Omega$, where~$S_\sigma$ is a positive integer, that is,
\begin{equation}\label{sys-Intro-o}%
  w(t)=\clZ_{S} y_\ttr(t,\Bigcdot)\coloneqq\begin{bmatrix}
           y_\ttr (t, x^{S,1}) \\  y_\ttr(t,x^{S,2}) \\ \vdots \\ y_\ttr(t,x^{S,S_\sigma})
         \end{bmatrix}\in\bbR^{S_\sigma\times 1}.
\end{equation}
\end{subequations}
\begin{remark}
The reason why we denote the points where the measurements are taken as~$x^{S,j}$ is that the detectability result will hold for a large enough number of measurements. Thus, it will be convenient to consider a sequence of output  operators~$(\clZ_{S})_{S\in\bbN_+}$, corresponding to measurements at a set of points~$\{x^{S,j}\mid 1\le j\le S_\sigma\}$. Namely, for a fixed~$S$, we will have~$S_\sigma=\sigma(S)$ sensors, for a strictly increasing function~$\sigma\colon\bbN\to\bbN$, and the location of each sensor will be defined by the pair~$(S,j)$.
\end{remark}

To obtain a state estimate~$y_\tte$ for~$y_\ttr$ we design a Luenberger observer as
\begin{subequations}\label{sys-KS-flame-obs}
  \begin{align}
 &\tfrac{\p}{\p t}y_\tte+\nu_2 \Delta^2y_\tte+\nu_1 \Delta y_\tte+\nu_0\tfrac{1}{2}\norm{\nabla y_\tte}{\bbR^d}^2=f+\fkI_{S}^{[\lambda,\Lambda]}(\clZ_{S}y_\tte-w),\qquad\clG y_\tte\rest{\p\Omega}=g, \\ & y_\tte(0,x)= y_{\tte0}\in W^{2,2}(\Omega),
\end{align}
\end{subequations}
for a suitable output injection operator~$\fkI_{S}^{[\lambda,\Lambda]}\colon\bbR^{S_\sigma}\to L^2(\Omega)$, so that
\begin{align}\label{goal} 
 \norm{y_\tte(t,\Bigcdot)-y_\ttr(t,\Bigcdot)}{W^{2,2}(\Omega)}\le \varrho\ex^{-\mu (t-s)}\norm{y_\tte(s,\Bigcdot)-y_\ttr(s,\Bigcdot)}{W^{2,2}(\Omega)},
 \quad\mbox{for}\quad t\ge s\ge0.
\end{align}

\begin{remark}(On notation).
Note that we use  the subscript~$\ttr$ to denote the {\em reference}~$y_\ttr$  targeted  state, and we use the subscript~$\tte$ to denote its {\em estimate}~$y_\tte$ provided by the observer. A common notation in the literature is to denote the  targeted state by a simple alphabetical letter as~$y$ and its estimate by~$\widehat y$ (cf.~\cite{JadachowskiMeurerKugi15,SmyshlyaevKrstic05,AguiarHesp09,KatzFridman22}). The reason we do not use the circumflex accent~``$\widehat{\;\;}$'' for the estimate is to avoid potential confusion with some references where this accent is used to denote targeted trajectories; see~\cite{BarRodShi11,Rod21-amo} in the context of stabilizability. Though that context  is conceptually different from the context of detectability/observer design addressed in this manuscript, these two contexts are combined in output-based stabilization feedback control problem applications.  
\end{remark}

For the error~$z\coloneqq y_\tte-y_\ttr$ we find the dynamics
\begin{subequations}\label{sys-KS-flame-z} 
\begin{align}
 &\tfrac{\p}{\p t}z+\nu_2 \Delta^2z+\nu_1 \Delta z+\nu_0(\nabla y_\ttr,\nabla z)_{\bbR^d}+\nu_0\tfrac{1}{2}\norm{\nabla z}{\bbR^d}^2=\fkI_{S}^{[\lambda,\Lambda]}\clZ_{S}z,\\
&z(0)=z_0\coloneqq y_\tte(0)- y_\ttr(0),
\end{align} 
and the goal in~\eqref{goal} reads
\end{subequations}
\begin{align}\label{goal-error} 
 \norm{z(t,\Bigcdot)}{W^{2,2}(\Omega)}\le \varrho\ex^{-\mu (t-s)}\norm{z(s,\Bigcdot)}{W^{2,2}(\Omega)},
 \quad\mbox{for}\quad t\ge s\ge0.
\end{align}

Let us denote the set of points where the measurements are taken as
 \begin{equation}\label{WS-Intro}
 W_S\coloneqq\{x^{S,j}\mid 1\le j\le S_\sigma\}\subset\Omega.
  \end{equation}

When applying the main result of this manuscript to the concrete model~\eqref{sys-KS-flame}, under appropriate boundary conditions, we can take
\begin{align}\label{sys-haty-o-Inj}
 &\fkI_{S}^{[\lambda,\Lambda]}\coloneqq-\lambda \nu_2^{-1}(-\Delta+\Id)^{-2}\clZ_S^*\Lambda \quad
 \mbox{with}\quad \Lambda\in\bbR^{S_\sigma\times S_\sigma},\quad{\rm eig}(\Lambda+\Lambda^\top,1)=1,
 \end{align}
  where~$\clZ_S^*$ stands for the adjoint of~$\clZ_S$ and~${\rm eig}(\Lambda+\Lambda^\top,1)$ stands for the smallest eigenvalue of~$\Lambda+\Lambda^\top$. Then, the main result reads as follows.
\begin{mainresult}
There exists a sequence~$(W_S)_{S\in\bbN_+}$ of subsets as in~\eqref{WS-Intro} such that for any given~$R>0$, $\varrho>1$, and~$\mu>0$, there is a large
 enough~$S\in\bbN_+$, for which we can find a large enough~$\lambda>0$ such that: for all
 initial error satisfying~$\norm{z(0,\Bigcdot)}{W^{2,2}(\Omega)}\le R$, it follows that the
 corresponding solution of~\eqref{sys-KS-flame-obs},
 with the output injection operator as in~\eqref{sys-haty-o-Inj}, satisfies~\eqref{goal-error}.
 \end{mainresult}

Explicit locations for the points in~$W_S$ will be given later on.

Though the initial state~$y_\ttr(0,x)$ is unavailable, the choice of~$y_{\tte0}=y_\tte(0,x)$ is at our disposal,
for example, we can choose~$y_{\tte0}$ as an initial guess we might have for~$y_\ttr(0,x)$.

Note that Main Result is semiglobal, that is,  the error will converge exponentially to zero, for {\em arbitrary large initial errors}~$\norm{z_0}{V}\le R$, 
with an {\em arbitrarily large exponential rate}~$\mu>0$, and
{\em arbitrary small transient bound}~$\varrho>1$, provided we take a large enough number of sensors~$S_\sigma$, depending on~$(R,\mu,\varrho)$, and
a large enough~$\lambda$, depending on~$(R,\mu,\varrho,S_\sigma,\Lambda)$.
We would like to mention that, though in particular settings we can take the ``optimal'' transient bound~$\varrho=1$, this may be not  always possible. The case~$\varrho=1$
is interesting as it implies that the error norm is {\em strictly} decreasing. Also, in theory we can take an arbitrary~$\Lambda$ satisfying~${\rm eig}(\Lambda+\Lambda^\top,1)$, however in practical applications the choice of~$\Lambda$ can play a crucial role in the performance of the observer.

\subsection{A more concrete form for the injection operator}\label{sS:Inj-as-deltas}
Let~$\clZ_S$ be as in~\eqref{sys-Intro-o} and let~$v\in\bbR^{S_\sigma\times 1}$ and~$h\in W^{2,2}(\Omega)$. Then,  we find
 \[
 v^\top\clZ_Sh={\textstyle\sum\limits_{i=1}^{S_\sigma}}v_{i,1}(\clZ_Sh)_{i,1}
 ={\textstyle\sum\limits_{i=1}^{S_\sigma}}v_{i,1}h(x^{S,i})=\langle{\textstyle\sum\limits_{i=1}^{S_\sigma}}v_{i,1}\deltafun_{x^{S,i}},h\rangle_{W^{-2,2}(\Omega),W^{2,2}(\Omega)}
 \]
and we can write $\clZ_S^*v={\textstyle\sum\limits_{i=1}^{S_\sigma}}v_{i,1}\deltafun_{x^{S,i}}$, which allows us to write
the injection operator in~\eqref{sys-haty-o-Inj}, for a given output~$w\in\bbR^{S_\sigma\times 1}$ and with~$A\coloneqq\nu_2(-\Delta+\Id)^{2}$, as
\begin{align}\label{sys-haty-o-Inj-delta}
 &\fkI_{S}^{[\lambda,\Lambda]}w=-\lambda A^{-1}\clZ_S^*\Lambda w=-\lambda A^{-1}{\textstyle\sum\limits_{i=1}^{S_\sigma}} (\Lambda w)_{i,1}\deltafun_{x^{S,i}}=-\lambda {\textstyle\sum\limits_{i=1}^{S_\sigma}} (\Lambda w)_{i,1} A^{-1}\deltafun_{x^{S,i}}.
 \end{align}
That is, the forcing injected by~$\fkI_{S}^{[\lambda,\Lambda]}$ into the observer dynamics  is a linear combination of the functions~$A^{-1}\deltafun_{x^{S,i}}\in  L^2(\Omega)$, where~$\deltafun_{x^{S,i}}$ is the delta distribution located at the spatial point~$x^{S,i}\in\Omega$, that is, $\langle\deltafun_{x^{S,i}}, h\rangle_{W^{-2,2}(\Omega),W^{2,2}(\Omega)}
 \coloneqq h(x^{S,i})$.

\begin{remark}
We do not know whether both existence and uniqueness of weak solutions hold true for model~\eqref{sys-KS-flame-obs}, for external forces as delta distributions as~$\deltafun_{x^{S,i}}\in W^{-2,2}(\Omega)$. This is the reason we consider the stabilization of strong solutions, and to have such solutions we need more regular external forces, hence we take the more regular functions~$A^{-1}\deltafun_{x^{S,i}}\in L^2(\Omega)$ in~\eqref{sys-haty-o-Inj-delta}. 
\end{remark}

\subsection{Motivation and literature}\label{sS:motivation}
Recovering the state of a given system from the output of a set of measurements is an interesting subject on its own (cf. the data assimilation results in~\cite{AzouaniOlsonTiti14,OlsonTiti03,MarkowichTitiTrabelsi16}). They also play a paramount role  in control applications as in the implementation of output based stabilizing feedback controls (cf. the discussion in~\cite[sect.~1.1]{Rod21-jnls}). See also~\cite{KangFridman19,BuchotRaymondTiago15,Rod21-aut}.

We cannot expect
that the state~$y_\ttr(t,\Bigcdot)$ living in an infinite-dimensional space of functions~$W^{2,2}(\Omega)$ can be reconstructed from the
finite-dimensional vector~$w (t)=\clZ_{S}y_\ttr(t,\Bigcdot)$ at a fixed time~$t$, this is why we look for
a {\em dynamical} observer as~\eqref{sys-KS-flame-obs}, in order to construct an estimate~$y_\tte(t,\Bigcdot)$ for~$y_\ttr(t,\Bigcdot)$, 
which will be improving as time increases.

In this manuscript we consider a general class of nonatonomous semilinear dynamical systems.
We are particularly interested in the case the output is given by measurements of the state  at a finite number of spatial points. In~\cite[sect.~1.1]{Rod21-jnls} an observer is presented for an analogous class of dynamical systems  in the case the output is given by measurements of the average of the state  at a finite number of spatial subdomains.

Both average and point measurements are covered by the method in~\cite{AzouaniOlsonTiti14} corresponding to so called
determining volumes/averages and determining nodes/points. In the case of average measurements, in~\cite[Rem.~3.13]{Rod21-jnls} a difference is pointed out between a result of the approaches followed in~\cite{Rod21-jnls} and~\cite{AzouaniOlsonTiti14}, namely, concerning the (order of the) choice of the number of sensors~$S_\sigma$ and the gain parameter~$\lambda$ in the output injection operator. The approach is this manuscript is closer to that in~\cite{Rod21-jnls} and an analogue difference holds in the case of point measurements. In our approach we look for a monotone output injection operator, while in~\cite{AzouaniOlsonTiti14} the monotonicity is not required. Our approach is proven to work for a more general class of systems, in particular, the solutions of the free dynamics in~\cite{AzouaniOlsonTiti14} are assumed to be globally defined  in time, while in our approach a general  free dynamics solution is allowed to blow-up in finite-time, only the targeted solution~$y_\ttr$ is assumed to be globally defined  in time. In particular, the output injection operator should be constructed so that the solution~$y_\tte$ of the observer (or, equivalently,  the solution~$z=y_\tte-y_\ttr$ of the error dynamics) is globally defined in time (i.e., so that finite-time blow up does not occur and~\eqref{goal} makes sense for all~$t>0$).

We underline that the output injection operator we propose here is explicitly given as well as those as in~\cite{AzouaniOlsonTiti14}. Thus these observers will be able to give us an estimate of the state in real time, which can play an important role in applications, for example, in the performance of feedback stabilizing controls based on the state estimate provided by the observer. 

At this point, we refer the reader also to the works~\cite{MeurerKugi09,Meurer13,JadachowskiMeurerKugi13} for one-dimensional parabolic equations, $d=1$, by using a different popular method involving the
nontrivial backstepping and Cole-Hopf transformations.

For autonomous dynamics, we can often derive detectability results (i.e., the existence of exponential observers) from the spectral properties of the time-independent operators defining the dynamics~\cite{RamdaniTucsnakValein16}. Spectral properties are likely not appropriate to tackle the nonautonomous case~\cite{Wu74}. Thus instead of departing from spectral properties, the proof of the results departs from standard energy estimates, that is, the explicit output injection is constructed in such a way that from such estimates we are able to conclude the exponential decrease of the norm of the error.

 \subsection{Abstract formulation}\label{sS:abstr-form}
 We shall prove the results for a general class of evolution parabolic-like equations as
 \begin{align}\label{sys-y} 
 \dot y_\ttr +Ay_\ttr+A_{\rm rc}y_\ttr +\clN(y_\ttr)=f,\qquad w=\clZ_{S} y_\ttr,
\end{align} 
under general assumptions on the plant
operators~$A$, $A_{\rm rc}$, $\clN$, and on our targeted state~$y_\ttr$. The satisfiability of these assumptions shall be checked for the concrete Kuramoto--Sivashinsky model~\eqref{sys-KS-flame}. 
An estimate for the state~$y_\ttr$ of system~\eqref{sys-y} shall be given by an abstract Luenberger observer as 
 \begin{align}\label{sys-haty} 
 &\dot{y_\tte} +Ay_\tte+A_{\rm rc}y_\tte +\clN(y_\tte)=f-\lambda A^{-1}\overline\fkI_{S}(\clZ_Sy_\tte-w),\qquad y_\tte(0)=y_{\tte0},
\end{align}
for an output operator~$\clZ_S\colon \rmD(A)\to\bbR^{S_\sigma\times1}$ obtained from a finite number~${S_\sigma}$ of sensors
\begin{equation}\label{sens-V'} 
\clZ_Sv\coloneqq\begin{bmatrix}\fkw^{S,1}v\\ \fkw^{S,2}v\\ \vdots\\ \fkw^{S,S_\sigma}v\end{bmatrix},\quad \fkw^{S,j}\in \rmD(A)',\quad \fkw^{S,j}v\coloneqq\langle\fkw^{S,j},v\rangle_{\rmD(A)',\rmD(A)},\qquad 1\le j\le S_\sigma.
\end{equation}
 where~$\rmD(A)=\{h\in H\mid Ah\in H\}$ is the domain of~$A$ in a  pivot Hilbert space~$H=H'$.
 
 The result will follow for large enough~$S_\sigma$ and~$\lambda$, and for {\em an appropriate set} of sensors, which in some concrete applications/examples can be translated to {\em an appropriate location} of the sensors.

\subsection{Contents and notation}
In Section~\ref{S:assumptions} we present the assumptions we require for the
dynamics plant operators and for all the ``parameters'' involved in
the output injection operator. 
In Section~\ref{S:exp_error} we prove that under such assumptions the error norm
of the observer estimate decreases
exponentially to zero.  The satisfiability of the assumptions are shown in Section~\ref{S:KS-satisfAssumpt} for the the model~\eqref{sys-KS-flame} for flame propagation. Numerical simulations showing the exponential stability of the error
dynamics are presented in Section~\ref{S:simul-flame} for the same model. Since many results in the literature address the Kuramoto--Sivashinsky model~\eqref{sys-KS-flame} for fluid flow, in Section~\ref{S:simul-fluid} we also present simulations showing the performance of the proposed observer for this model.

\smallskip

Concerning notation, we write~$\bbR$ and~$\bbN$ for the sets of real numbers and nonnegative
integers, respectively, and we set $\bbR_+\coloneqq(0,+\infty)$,  $\overline\bbR_+\coloneqq[0,+\infty)$
and~$\bbN_+\coloneqq\bbN\setminus\{0\}$.

Given two Banach spaces~$X$ and~$Y$, if the inclusion
$X\subseteq Y$ is continuous, we write $X\xhookrightarrow{} Y$. We write
$X\xhookrightarrow{\rm d} Y$, respectively $X\xhookrightarrow{\rm c} Y$,
if the inclusion is also dense, respectively compact.

The space of continuous linear mappings from~$X$ into~$Y$ is denoted by~$\clL(X,Y)$. In case~$X=Y$ we 
write~$\clL(X)\coloneqq\clL(X,X)$.
The continuous dual of~$X$ is denoted~$X'\coloneqq\clL(X,\bbR)$.
The adjoint of an operator $L\in\clL(X,Y)$ will be denoted $L^*\in\clL(Y',X')$, and its kernel (i.e., null space)
will be denoted $\ker L\coloneqq \{v\in X\mid Lv=0\}$.

We follow the usual notation~$L^2(\Omega)$ for the Lebesgue space of square integrable functions, and~$W^{k,2}(\Omega)=\{h\in L^2(\Omega)\mid \nabla^ih\in L^2(\Omega),\; 0\le i\le k\}$ for the Sobolev spaces; where~$\nabla^ih$ stands for the partial derivatives of order~$i$, $k\in\bbN$.

The space of continuous functions from~$X$ into~$Y$ is denoted by~$\clC(X,Y)$.
The space of increasing functions in~$\clC(\overline\bbR_+,\bbR)$ vanishing at~$0$ is denoted by
\begin{equation}\notag
 \clC_{0,\iota}(\overline\bbR_+,\bbR) \coloneqq \{\fki\in \clC(\overline\bbR_+,\bbR)\mid
 \; \mathfrak i(0)=0,\quad\!\!\mbox{and}\quad\!\!
 \mathfrak i(\varkappa_2)\ge\mathfrak i(\varkappa_1)\;\mbox{ if }\; \varkappa_2\ge \varkappa_1\ge0\}.
\end{equation}

We also denote the vector subspace~$\clC_{\rm b, \iota}(X, Y)\subset \clC(X,Y)$ by  
\begin{equation}\notag
 \clC_{\rm b, \iota}(X, Y)\coloneqq
 \left\{f\in \clC(X,Y) \mid \exists\mathfrak i\in \clC_{0,\iota}(\overline{\bbR_0},\bbR)\;\forall x\in X:\;
\norm{f(x)}{Y}\le \mathfrak i (\norm{x}{X})
\right\}.
\end{equation}

The orthogonal complement to a given subset~$B\subset H$ of a Hilbert space~$H$,
with scalar product~$(\Bigcdot,\Bigcdot)_H$,  is 
denoted~$B^\perp\coloneqq\{h\in H\mid (h,s)_H=0\mbox{ for all }s\in B\}$.

Given two closed subspaces~$F\subseteq H$ and~$G\subseteq H$ of the
Hilbert space~$H$, we write~$H=F\oplus G$ in case we know that both~$H=F+ G$ and~$F\cap G=\{0\}$.
If~$H=F\oplus G$, we denote by~$P_F^G\in\clL(H,F)$
the oblique projection in~$H$ onto~$F$ along~$G$. That is, writing $h\in H$ as $h=h_F+h_G$
with~$(h_F,h_G)\in F\times G$, we have~$P_F^Gh\coloneqq h_F$.
The orthogonal projection in~$H$ onto~$F$ is denoted by~$P_F\in\clL(H,F)$.
Notice that~$P_F= P_F^{F^\perp}$.

By~$\bbR^{m\times n}$ we denote the space of matrices with~$m$ rows and~$n$ columns, and with real entries.  The entry of~$\clM\in\bbR^{m\times n}$ in the~$i$th row and~$j$th column is denoted by~$\clM_{(i,j)}\in\bbR$; we write~$\clM^\top\in\bbR^{n\times m}$ for the transpose of~$\clM$. The inverse of~$\clM\in\bbR^{m\times m}$, if it exists,  is denoted by~$\clM^{-1}\in\bbR^{m\times m}$;  for simplicity, we also denote~$\clM^{-\top}\coloneqq(\clM^{\top})^{-1}=(\clM^{-1})^{\top}$.
For a symmetric matrix~$\clM\in\bbR^{m\times m}$, $\clM=\clM^\top$,
we denote the sequence of its increasing eigenvalues as
\[
{\rm eig}(\clM,1)\le {\rm eig}(\clM,2)\le\dots\le {\rm eig}(\clM,m),\qquad {\rm eig}(\clM,i)\in\bbR,\quad 1\le i\le m.
\]
We say that a symmetric matrix~$\clM\in\bbR^{m\times m}$ is positive definite if~${\rm eig}(\clM,1)>0$.

Given~$a=(a_1,a_2,\dots,a_n)\in\overline\bbR_+^n$, $n\in\bbN_+$, we
denote~$\|a\|\coloneqq\max\limits_{1\le j\le n} a_j$.

By
$\ovlineC{a_1,a_2,\dots,a_n}$ we denote a nonnegative function that
increases in each of its nonnegative arguments~$a_i$, $1\le i\le n$. To shorten the notation, for~$a\in\overline\bbR_+^n$ and~$b\in\overline\bbR_+^m$ we shall denote~$\ovlineC{a}\coloneqq\ovlineC{a_1,a_2,\dots,a_n}$ and~$\ovlineC{a;b_{1},...,b_{m}}\coloneqq\ovlineC{a_1,\dots,a_n,b_{1},...,b_{m}}$.

Finally, $C,\,C_i$, $i=0,\,1,\,\dots$, stand for unessential positive constants.

\section{Assumptions}\label{S:assumptions}
Let an estimate for the targeted state~$y_\ttr$ of the nominal system~\eqref{sys-y} be given by a Luenberger observer as~\eqref{sys-haty}. 
Then, for the error~$z\coloneqq y_\tte-y_\ttr$ we find the dynamics
\begin{subequations}\label{sys-z} 
\begin{align}
 &\dot{z} +Az+A_{\rm rc}z +\fkN_{y_\ttr}z=-\lambda A^{-1}\overline\fkI_{S}\clZ_Sz,\qquad z(0)=z_0\coloneqq y_\tte(0)- y_\ttr(0),\\
 \mbox{with}\quad&\fkN_{y_\ttr}(z)\coloneqq \clN(t,z+y_\ttr)-\clN(t,y_\ttr).
\end{align} 
\end{subequations}

Hereafter, all Hilbert spaces are real and separable. 
We set a pivot Hilbert space~$H$, that is, identified with its continuous dual, $H'=H$, and we set another Hilbert space~$V\subset H$.
\begin{assumption}\label{A:A0sp}
 $A\in\clL(V,V')$ is symmetric and $(v,w)\mapsto\langle Av,w\rangle_{V',V}$ is a complete scalar product in~$V.$
\end{assumption}

From now on, we suppose that~$V$ is endowed with the scalar product~$(v,w)_V\coloneqq\langle Av,w\rangle_{V',V}$,
which still makes~$V$ a Hilbert space.
Necessarily, $A\colon V\to V'$ is an isometry.
\begin{assumption}\label{A:A0cdc}
The inclusion $V\subseteq H$ is dense, continuous, and compact.
\end{assumption}

Necessarily, we have that
\[
 \langle v,w\rangle_{V',V}=(v,w)_{H},\quad\mbox{for all }(v,w)\in H\times V,
\]
and also that the operator $A$ is densely defined in~$H$, with domain $\rmD(A)$ satisfying
\[
\rmD(A)\xhookrightarrow{\rm d,\,c} V\xhookrightarrow{\rm d,\,c} H\xhookrightarrow{\rm d,\,c} V'
\xhookrightarrow{\rm d,\,c}\rmD(A)'.
\]
Further,~$A$ has a compact inverse~$A^{-1}\colon H\to H$,
and we can find a nondecreasing
system of (repeated accordingly to their multiplicity) eigenvalues
$(\alpha_n)_{n\in\bbN_0}$ and a corresponding complete basis of
eigenfunctions $(e_n)_{n\in\bbN_0}$:
\[\label{eigfeigv}
0<\alpha_1\le\alpha_2\le\dots\le\alpha_n\le\alpha_{n+1}\to+\infty \quad\mbox{and}\quad Ae_n=\alpha_n e_n.
\]

We can define, for every $\xi\in\bbR$, the fractional powers~$A^\xi$, of $A$, by
\[
 y={\textstyle\sum\limits_{n=1}^{+\infty}}y_ne_n,\quad A^\xi y=A^\xi {\textstyle\sum\limits_{n=1}^{+\infty}}y_ne_n
 \coloneqq{\textstyle\sum\limits_{n=1}^{+\infty}}\alpha_n^\xi y_n e_n,
\]
and the corresponding domains~$\rmD(A^{|\xi|})\coloneqq\{y\in H\mid A^{|\xi|} y\in H\}$, and
$\rmD(A^{-|\xi|})\coloneqq \rmD(A^{|\xi|})'$.
We have that~$\rmD(A^{\xi})\xhookrightarrow{\rm d,\,c}\rmD(A^{\zeta_1})$, for all $\xi>\xi_1$,
and we can see that~$\rmD(A^{0})=H$, $\rmD(A^{1})=\rmD(A)$, $\rmD(A^{\frac{1}{2}})=V$.

For the time-dependent operator and external forcing we assume the following:

\begin{assumption}\label{A:A1}
For almost every~$t>0$ we have~$A_{\rm rc}(t)\in\clL(V, H)$,
and we have a uniform bound as $\norm{A_{\rm rc}}{L^\infty(\bbR_0,\clL(V, H))}\eqqcolon C_{\rm rc}<+\infty.$
\end{assumption}

\begin{assumption}\label{A:realy}
There exists a closed subspace~$\fkG$ of~$\rmD(A)$ such that the orthogonal component~$y_{\ttr\fkG}\coloneqq P_\fkG^{\fkG^{\perp\rmD(A)}} y_\ttr$ of the reference state~$y_\ttr$, solving~\eqref{sys-y}, is persistently uniformly bounded as follows. There are constants~$C_{y_\ttr}\ge0$ and~$\tau_{y_\ttr}>0$ such that
\begin{align}
\sup_{s\ge0}\norm{y_{\ttr\fkG}(s)}{V}\le C_{y_\ttr}\quad\mbox{and}\quad\sup_{s\ge0}\norm{y_{\ttr\fkG}}{L^2((s,s+\tau_{y_\ttr}),\rmD(A))}<C_{y_\ttr}.\notag
\end{align}
\end{assumption}

\begin{assumption}\label{A:N}
 We have~$\clN(t,\Bigcdot)\in\clC_{\rm b,\iota}(\rmD(A),H)$ 
and~$\clN(t,y)=\clN(t,P_\fkG^{\fkG^{\perp\rmD(A)}} y)$ for all~$y\in \rmD(A)$, where~$\fkG\subseteq\rmD(A)$ is as in Assumption~\ref{A:realy}. Furthermore,
there exist constants $C_\clN\ge 0$, $n\in\bbN_0$, 
$\zeta_{1j}\ge0$, $\zeta_{2j}\ge0$,
 $\delta_{1j}\ge 0$, ~$\delta_{2j}\ge 0$, 
 with~$j\in\{1,2,\dots,n\}$, such that
 for  all~$t>0$ and
 all~$(y_1,y_2)\in \fkG\times\fkG$, we have
\begin{align}
&\norm{\clN(t,y_1)-\clN(t,y_2)}{H}\le C_\clN\textstyle\sum\limits_{j=1}^{n}
  \left( \norm{y_1}{V}^{\zeta_{1j}}\norm{y_1}{\rmD(A)}^{\zeta_{2j}}+\norm{y_2}{V}^{\zeta_{1j}}
  \norm{y_2}{\rmD(A)}^{\zeta_{2j}}\right)
   \norm{d}{V}^{\delta_{1j}}\norm{d}{\rmD(A)}^{\delta_{2j}},\notag
\end{align}
with~$d\coloneqq y_1-y_2$, $\zeta_{2j}+\delta_{2j}<1$ and~$\delta_{1j}+\delta_{2j}\ge1$.
\end{assumption}

Together with the set~$W_S$ of sensors we will need an auxiliary set of functions~$\widetilde W_S$.
 \begin{assumption}\label{A:sens} 
The sequence of pairs~$(W_S,\widetilde W_S)_{S\in\bbN}$ satisfies
\begin{align}
&W_S\coloneqq\{\fkw^{S,j}\mid 1\le j\le S_\sigma\}\subseteq  \rmD(A^{-1}),&\quad\clW_S\coloneqq\linspan W_S,&\quad\dim \clW_S=S_\sigma;\notag\\
&\widetilde W_S\coloneqq\{\Psi^{S,j}\mid 1\le j\le S_\sigma\}\subseteq \rmD(A),&\quad\widetilde \clW_S\coloneqq\linspan \widetilde W_S,&\quad\dim \widetilde \clW_S=S_\sigma;\notag
\end{align}
with~$S_\sigma\coloneqq\sigma(S)$, where~$\sigma\colon\bbN_+\to\bbN_+$ is a strictly increasing function.
Furthermore, we have the direct sum~$\rmD(A)=\widetilde \clW_S\oplus\fkZ_S$, where~$\fkZ_S\coloneqq\rmD(A){\textstyle\bigcap}\ker\clZ_S$.
\end{assumption}

\begin{assumption}\label{A:Poincare}
The sequence~$(\beta_{S})_{S\in\bbN_0}$ of the Poincar\'e-like constants
\begin{align}\label{Poinc_const}
\beta_{S}\coloneqq\inf_{\Theta\in \fkZ_S\setminus\{0\}}
\tfrac{\norm{\Theta}{\rmD(A)}^2}{\norm{\Theta}{V}^2},
\end{align}
is divergent, $\lim\limits_{S\to+\infty}\beta_{S}=+\infty$. Where~$\fkZ_S=\rmD(A){\textstyle\bigcap}\ker\clZ_S$ as in Assumption~\ref{A:sens}. 
\end{assumption}

 \begin{assumption}\label{A:Inj} 
The operator~$\overline\fkI_{S}\in\clL(\bbR^{S_\sigma},\rmD(A^{-1}))$  satisfies
\[
(A^{-1}\overline\fkI_{S}\clZ_S z,Az)_H \ge \tfrac12\norm{\clZ_Sz}{\bbR^{S_\sigma}}^2, 
\quad \mbox{for all} \quad z\in D(A).
\]
\end{assumption}

\section{Exponential stability of the error dynamics}\label{S:exp_error}
Recall the auxiliary functions~$\Psi^{S,j}\in\widetilde\clW_S$ in Assumption~\ref{A:sens} and its linear span giving us the direct sum~$\rmD(A)=\widetilde \clW_S\oplus\fkZ_S$, where~$\fkZ_S\coloneqq\rmD(A){\textstyle\bigcap}\ker\clZ_S$. Let us introduce
\begin{equation}\label{normOP}
C^P_S=\sup_{h\in\rmD(A)\setminus\ker\clZ_S}\tfrac{  \norm{P_{\widetilde \clW_S}^{\fkZ_S}h}{\rmD(A)} }{ \norm{\clZ_Sh}{\bbR^{S_\sigma}}  }.
\end{equation} 
We shall show that~$C^P_S<+\infty$, for an arbitrary fixed~$S$.

The main result of this manuscript, in abstract form, is the following. 
 \begin{theorem}\label{T:main}
 Let Assumptions~\ref{A:A0sp}--\ref{A:Inj} hold true. Let us be
 given~ $R>0$, $\varrho>1$, and~$\mu>0$. Then the exists~$S_*\in\bbN_+$ such that
 the error solving~\eqref{sys-z}
satisfies
\begin{align}\label{goal-dif-error} 
 \norm{z(t)}{V}\le \varrho\ex^{-\mu (t-s)}\norm{z(s)}{V},\quad\mbox{for all}\quad t\ge s\ge0.
\end{align}
for all~$S\ge S_*$ and all~$\lambda\ge\lambda_*(S)$, where
the constants~$S_*$ and~$\lambda_*(S)$ can be taken as
\begin{align}
 S_*&=\ovlineC{a}\in\bbN_+\quad\mbox{and}\quad \lambda_*(S)=\ovlineC{a;C^P_S}\in\bbR_+,
 \label{formlam*}\\
 \mbox{with}\quad a&\coloneqq(R,\mu,\varrho,\tfrac1{\varrho^\frac12-1},
 \tfrac1{\tau_{y_\ttr}},\tau_{y_\ttr},
 \tfrac{2\dnorm{\zeta_{1}}{}}{1-\dnorm{\delta_{2}+\zeta_{2}}{}},
 \tfrac{2+\dnorm{\frac{2\zeta_{2}}{1-\delta_{2}}}{}}{2-\dnorm{\frac{2\zeta_{2}}{1-\delta_{2}}}{}}
 ,\tfrac{2\dnorm{\delta_{1}+\delta_{2}+\zeta_{1}+\zeta_{2}}{}-2}{1-\dnorm{\delta_{2}+\zeta_{2}}{}},C_{\rm rc},C_{y_\ttr}),\notag
\end{align}
where~$(C_{\rm rc},C_{y_\ttr},\tau_{y_\ttr},\delta_{1j},\delta_{2j},\zeta_{1j},\zeta_{2j})$ are as in Assumptions~\ref{A:A1}--\ref{A:N} and~$ C^P_S$  as in~\eqref{normOP}.
\end{theorem}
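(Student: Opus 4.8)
The plan is to obtain the decay directly from an energy estimate in the $V$-norm, exploiting the splitting $\rmD(A)=\widetilde\clW_S\oplus\fkZ_S$ of Assumption~\ref{A:sens} to separate the error into a finite-dimensional part seen by the output and an infinite-dimensional part controlled by the Poincar\'e constant~$\beta_S$ of \eqref{Poinc_const}. First I would record that $C^P_S<+\infty$: since $\dim\widetilde\clW_S=S_\sigma=\dim\bbR^{S_\sigma}$ and, by Assumption~\ref{A:sens}, $\widetilde\clW_S\cap\fkZ_S=\{0\}$ with $\fkZ_S=\rmD(A)\cap\ker\clZ_S$, the restriction $\clZ_S|_{\widetilde\clW_S}$ is injective, hence a bijection between finite-dimensional spaces with bounded inverse; because the $\fkZ_S$-component lies in $\ker\clZ_S$ we have $\clZ_S(P^{\fkZ_S}_{\widetilde\clW_S}h)=\clZ_Sh$, so that bound equals exactly $C^P_S$. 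Next I would test \eqref{sys-z} with $Az$ in $H$, using $\tfrac12\tfrac{\ed}{\ed t}\norm{z}{V}^2=(\dot z,Az)_H$ and Assumption~\ref{A:Inj} for the injection term, to reach
\begin{align}
\tfrac12\tfrac{\ed}{\ed t}\norm{z}{V}^2+\norm{z}{\rmD(A)}^2+\tfrac{\lambda}{2}\norm{\clZ_Sz}{\bbR^{S_\sigma}}^2\le\left|(A_{\rm rc}z,Az)_H\right|+\left|(\fkN_{y_\ttr}z,Az)_H\right|.\notag
\end{align}

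The coercivity step is the heart of the argument. Writing $z=P^{\fkZ_S}_{\widetilde\clW_S}z+P^{\widetilde\clW_S}_{\fkZ_S}z$, bounding the first summand in $\rmD(A)$ by $C^P_S\norm{\clZ_Sz}{\bbR^{S_\sigma}}$ via \eqref{normOP}, the second by the Poincar\'e inequality $\norm{\Theta}{V}^2\le\beta_S^{-1}\norm{\Theta}{\rmD(A)}^2$ on $\fkZ_S$, and using $\norm{v}{V}^2\le\alpha_1^{-1}\norm{v}{\rmD(A)}^2$ on the finite-dimensional part, I would obtain
\begin{align}
\norm{z}{V}^2\le\tfrac{4}{\beta_S}\norm{z}{\rmD(A)}^2+\left(\tfrac{2}{\alpha_1}+\tfrac{4}{\beta_S}\right)(C^P_S)^2\norm{\clZ_Sz}{\bbR^{S_\sigma}}^2.\notag
\end{align}
Reserving half of $\norm{z}{\rmD(A)}^2$ for the nonlinearity and feeding the rest, together with $\tfrac{\lambda}{2}\norm{\clZ_Sz}{\bbR^{S_\sigma}}^2$, into this inequality shows that $\tfrac12\norm{z}{\rmD(A)}^2+\tfrac{\lambda}{2}\norm{\clZ_Sz}{\bbR^{S_\sigma}}^2\ge\Gamma\norm{z}{V}^2$ with $\Gamma=\tfrac12\min\!\bigl(\tfrac{\beta_S}{4},\,\lambda\,[\,(\tfrac{2}{\alpha_1}+\tfrac{4}{\beta_S})(C^P_S)^2\,]^{-1}\bigr)$. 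This is exactly why $S_*=\ovlineC{a}$ depends only on the data (one needs $\beta_S$ large to dominate $\fkZ_S$, and by Assumption~\ref{A:Poincare} this is achieved by enlarging $S$), while $\lambda_*(S)=\ovlineC{a;C^P_S}$ depends on $C^P_S$ (one needs $\lambda$ large relative to $(C^P_S)^2$ to dominate $\widetilde\clW_S$ through the output term, $\beta_S$ entering only harmlessly once it is already large). We arrive at
\begin{align}
\tfrac{\ed}{\ed t}\norm{z}{V}^2+\tfrac12\norm{z}{\rmD(A)}^2+\Gamma\norm{z}{V}^2\le2\left|(A_{\rm rc}z,Az)_H\right|+2\left|(\fkN_{y_\ttr}z,Az)_H\right|.\notag
\end{align}

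It remains to estimate the right-hand side. The $A_{\rm rc}$-term is handled by Assumption~\ref{A:A1} and Young's inequality, absorbing $\tfrac18\norm{z}{\rmD(A)}^2$. For the nonlinearity I would invoke Assumption~\ref{A:N} with $y_1=P^{\fkG^{\perp\rmD(A)}}_\fkG z+y_{\ttr\fkG}$, $y_2=y_{\ttr\fkG}$, $d=P^{\fkG^{\perp\rmD(A)}}_\fkG z$, estimate $\left|(\fkN_{y_\ttr}z,Az)_H\right|\le\norm{\fkN_{y_\ttr}z}{H}\norm{z}{\rmD(A)}$, and expand the coefficients using the persistence bounds of Assumption~\ref{A:realy}. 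The condition $\zeta_{2j}+\delta_{2j}<1$ keeps the total $\rmD(A)$-power in every term strictly below $2$, so Young absorbs it into the reserved $\tfrac12\norm{z}{\rmD(A)}^2$; the condition $\delta_{1j}+\delta_{2j}\ge1$ keeps every term at least linear in $z$, so the residue vanishes at $z=0$. The leftover coefficients carry powers of $C_{y_\ttr}$ and of the time-dependent $\norm{y_{\ttr\fkG}}{\rmD(A)}$, which by Assumption~\ref{A:realy} is only square-integrable over windows $(s,s+\tau_{y_\ttr})$. Collecting everything yields, for $S\ge S_*$ and $\lambda\ge\lambda_*(S)$,
\begin{align}
\tfrac{\ed}{\ed t}\norm{z}{V}^2+\bigl(\Gamma-g(t)\bigr)\norm{z}{V}^2\le\Psi\!\left(\norm{z}{V}\right)\norm{z}{V}^2,\notag
\end{align}
with $g$ uniformly integrable over windows of length $\tau_{y_\ttr}$ and $\Psi$ a superlinear remainder vanishing at $0$.

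Finally, I would close the estimate semiglobally. Given $\norm{z_0}{V}\le R$, and taking $S$ (hence $\Gamma$) large enough that $\Gamma$ beats $2\mu$, the window-integral of $g$, and the part of $\Psi$ admissible under the a priori size $R$, a barrier/continuation argument keeps $\norm{z(t)}{V}$ below a fixed threshold for all $t$, which renders $\Psi(\norm{z}{V})$ subordinate to the linear coercivity. A uniform Gronwall applied over the windows of length $\tau_{y_\ttr}$ then converts the on-average coercivity into uniform exponential decay \eqref{goal-dif-error}: the within-window overshoot fixes the transient factor $\varrho$, which is where $\tfrac{1}{\varrho^{1/2}-1}$, $\tfrac1{\tau_{y_\ttr}}$ and $\tau_{y_\ttr}$ enter the vector $a$, and the admissible exponents for $\Psi$ produce the remaining ratios of $\zeta$'s and $\delta$'s in \eqref{formlam*}. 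I expect the main obstacle to be precisely this last coupling: propagating the superlinear nonlinear remainder through a global-in-time continuation while possessing only a \emph{time-averaged} $\rmD(A)$-bound on $y_{\ttr\fkG}$, so that the bookkeeping of the uniform Gronwall over windows—rather than a pointwise differential inequality—is what is forced, and it is what obliges the transient constant $\varrho>1$.
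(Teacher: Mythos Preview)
Your proposal is correct and follows essentially the same route as the paper: the energy estimate on~$\norm{z}{V}^2$ via testing with~$Az$, the Poincar\'e-type coercivity from the oblique splitting (packaged in the paper as Lemma~\ref{L:MlamPoinc}), the nonlinearity bound from Assumption~\ref{A:N} with Young absorption of the sub-quadratic~$\rmD(A)$-powers, and the semiglobal ODE/Gronwall closure handling a superlinear remainder are exactly the steps the paper takes. The only difference is packaging: the paper invokes pre-processed nonlinearity estimates (Lemma~\ref{L:fkN}, producing the explicit exponents~$\chi_1,\dots,\chi_5$ and the integrability index~$\fkr=2/\chi_2>1$ for the time-dependent coefficient) and pre-proven scalar ODE stability lemmas (Lemmas~\ref{L:ode-stab0} and~\ref{L:ode-stab}) from earlier work, which is what makes the specific constants in the vector~$a$ of~\eqref{formlam*} transparent.
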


\begin{remark}
 Recall the notation~$\|b\|=\max\limits_{1\le j\le n} b_j$, for~$b\in\overline\bbR_+^n$. For example, in Theorem~\ref{T:main}, we have
$\dnorm{\frac{2\zeta_{2}}{1-\delta_{2}}}{}=\max\limits_{1\le j\le n}\frac{2\zeta_{2j}}{1-\delta_{2j}}$.
\end{remark}

 \subsection{Auxiliary results}
We gather results we shall use in the proof of Theorem~\ref{T:main}. We start with a key inequality holding for large enough~$S$ and  large enough~$\lambda=\lambda(S)$.
\begin{lemma}\label{L:bddCPS}
Let~$\rmD(A)=\widetilde \clW_S\oplus\fkZ_S$, where~$\fkZ_S\coloneqq\rmD(A){\textstyle\bigcap}\ker\clZ_S$. Then the
constant in~\eqref{normOP} is bounded,~$C^P_S<+\infty$.
\end{lemma}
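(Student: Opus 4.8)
The plan is to reduce the supremum defining $C^P_S$ to a statement about the finite-dimensional space $\widetilde\clW_S$ and then invoke equivalence of norms. First I would fix $S$ and take any $h\in\rmD(A)\setminus\ker\clZ_S$, decomposing it along the direct sum $\rmD(A)=\widetilde\clW_S\oplus\fkZ_S$ of Assumption~\ref{A:sens} as $h=w+\Theta$ with $w\coloneqq P_{\widetilde\clW_S}^{\fkZ_S}h\in\widetilde\clW_S$ and $\Theta\in\fkZ_S$. Since $\fkZ_S=\rmD(A)\cap\ker\clZ_S$, we have $\clZ_S\Theta=0$, hence $\clZ_Sh=\clZ_Sw$. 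Thus the numerator and denominator of the quotient in~\eqref{normOP} see only the component $w$, so that
\[
\frac{\norm{P_{\widetilde\clW_S}^{\fkZ_S}h}{\rmD(A)}}{\norm{\clZ_Sh}{\bbR^{S_\sigma}}}=\frac{\norm{w}{\rmD(A)}}{\norm{\clZ_Sw}{\bbR^{S_\sigma}}},
\]
and I would note that $w\ne0$ whenever $h\notin\ker\clZ_S$ (otherwise $h=\Theta\in\ker\clZ_S$, a contradiction). Hence it suffices to bound the right-hand quotient uniformly over $w\in\widetilde\clW_S\setminus\{0\}$.

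The key point is that the restriction $\clZ_S\rest{\widetilde\clW_S}\colon\widetilde\clW_S\to\bbR^{S_\sigma}$ is injective. Indeed, its kernel is $\widetilde\clW_S\cap\ker\clZ_S=\widetilde\clW_S\cap\fkZ_S=\{0\}$, the last equality being exactly the transversality in the direct sum $\rmD(A)=\widetilde\clW_S\oplus\fkZ_S$. Since $\dim\widetilde\clW_S=S_\sigma=\dim\bbR^{S_\sigma}$, this injective linear map is in fact a bijection. Consequently $w\mapsto\norm{\clZ_Sw}{\bbR^{S_\sigma}}$ defines a norm on the finite-dimensional space $\widetilde\clW_S$, which by equivalence of all norms on a finite-dimensional vector space is comparable to $\norm{\Bigcdot}{\rmD(A)}$: there is a constant $c_S>0$ with $\norm{\clZ_Sw}{\bbR^{S_\sigma}}\ge c_S\norm{w}{\rmD(A)}$ for every $w\in\widetilde\clW_S$.

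Combining the two steps yields $C^P_S\le c_S^{-1}<+\infty$, which is the claim. The argument is essentially soft, and I do not expect a genuine obstacle in proving finiteness for a fixed $S$; the only thing to be careful about is to invoke the \emph{algebraic} direct-sum hypothesis of Assumption~\ref{A:sens} (equivalently, the injectivity of $\clZ_S$ on $\widetilde\clW_S$) rather than any quantitative control, since finiteness of $C^P_S$ for a single $S$ carries no uniformity in~$S$. The genuinely hard, uniform-in-$S$ behaviour of $C^P_S$---which enters the main theorem through $\lambda_*(S)=\ovlineC{a;C^P_S}$---is a separate matter depending on the explicit geometry of the sensors $W_S$ and on the Poincar\'e divergence of Assumption~\ref{A:Poincare}, and is not what this lemma asserts.
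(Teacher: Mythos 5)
Your proof is correct and takes essentially the same route as the paper's: both arguments rest on the observation that the transversality $\widetilde\clW_S\bigcap\fkZ_S=\{0\}$ from the direct sum, together with $\dim\widetilde\clW_S=S_\sigma$, makes the restriction $\clZ_S\rest{\widetilde\clW_S}\colon\widetilde\clW_S\to\bbR^{S_\sigma}$ a bijection, after which finiteness of $C^P_S$ is a purely finite-dimensional fact. The paper simply renders this in coordinates---it proves the matrix $\fkE_S=\clZ_S\Psi$ invertible, derives the explicit representation $P_{\widetilde\clW_S}^{\fkZ_S}h=\Psi\fkE_S^{-1}\clZ_S h$, and bounds $C^P_S\le\norm{\Psi\fkE_S^{-1}}{\clL(\bbR^{S_\sigma\times1},\rmD(A))}$, which coincides with your $c_S^{-1}$ because $\Psi\fkE_S^{-1}$ is precisely the inverse of $\clZ_S\rest{\widetilde\clW_S}$.
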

\begin{proof}
 Consider the mapping~$\fkE_S\colon\bbR^{S_\sigma\times 1}\to\bbR^{S_\sigma\times 1}$ as follows
\[
v\mapsto \fkE_S v\coloneqq\clZ_S\Psi v=\begin{bmatrix}
  \Psi^{S,j}(x^{S,i})
\end{bmatrix}v,\quad\mbox{with}\quad \Psi v\coloneqq{\textstyle\sum\limits_{j=1}^{4S}}v_{j,1}  \Psi^{S,j}, 
\]
where the~$\Psi^{S,j}$ form a basis for~$\widetilde \clW_S$, as in Assumption~\ref{A:sens}.
We show next that the matrix
$\fkE_S=\begin{bmatrix}
 \Psi^{S,j}(x^{S,i})
\end{bmatrix}
$
with entry~$\fkE_{S,(i,j)}=\Psi^{S,j}(x^{S,i})$ in the~$i$th row and~$j$th column is invertible. 
Indeed, if~$\fkE_Sv=0$ then~$\Psi v\in\widetilde \clW_S\bigcap\fkZ_S=\{0\}$, hence ~$v=0$ because the family~$\widetilde W_S$ is linearly independent. Now the invertibility of~$\fkE_S$ implies that
the oblique
projection~$P_{\widetilde \clW_S}^{\fkZ_S}$ in~$\rmD(A)$ onto~$\widetilde \clW_S$ along~$\fkZ_S$ is given by
\begin{equation}\label{obli_proj}
P_{\widetilde \clW_S}^{\fkZ_S}h=\Psi \fkE_S^{-1}\clZ_S h,
\end{equation}
because~$\clZ_S \Psi \fkE_S^{-1}\clZ_S h=\clZ_S h$ and~$\clZ_S(h- \Psi \fkE_S^{-1}\clZ_S h)=0$, that is,
\[
h= \Psi \fkE_S^{-1}\clZ_S h+(h- \Psi \fkE_S^{-1}\clZ_S h),
\]
with $ \Psi \fkE_S^{-1}\clZ_S h\in\widetilde \clW_S$ and~$h- \Psi \fkE_S^{-1}\clZ_S h\in\fkZ_S$. In particular, from the surjectivity of~$P_{\widetilde \clW_S}^{\fkZ_S}$ and~\eqref{obli_proj} it follows that~$\clZ_S$ is also surjective, which leads us to
\begin{align}
C^P_S&=\sup_{h\in\rmD(A)\setminus\ker\clZ_S}\tfrac{  \norm{P_{\widetilde \clW_S}^{\fkZ_S}h}{\rmD(A)} }{ \norm{\clZ_Sh}{\bbR^{S_\sigma}}  }
=\sup_{h\in\rmD(A)\setminus\ker\clZ_S}\tfrac{  \norm{\Psi \fkE_S^{-1}\clZ_Sh}{\rmD(A)} }{ \norm{\clZ_Sh}{\bbR^{S_\sigma}}  }
=\sup_{v\in\bbR^{S_\sigma\times 1}\setminus\{0\}}\tfrac{  \norm{\Psi \fkE_S^{-1}v}{\rmD(A)} }{ \norm{v}{\bbR^{S_\sigma}}  }.\notag
\end{align} 
Therefore, $C^P_S\le\norm{\Psi \fkE_S^{-1}}{\clL(\bbR^{S_\sigma\times 1},\rmD(A))}<+\infty$, which ends the proof.
\end{proof}

 \begin{lemma}\label{L:MlamPoinc} 
For every constant $\xi >0$ we can find $S_*=\ovlineC{\xi}\in\bbN_+$ such that
\[
\norm{z}{\rmD(A)}^2 + \lambda\norm{\clZ_Sz}{\bbR^{S_\delta}}^2 \ge \xi\norm{z}{V}^2, 
\quad \mbox{for all} \quad z\in \rmD(A),
\]
for all integer~$S\ge S_*$ and for all~$\lambda\ge\lambda_*(S)=\ovlineC{\xi,C^P_S}$, with
with~$C^P_S<+\infty$ as in~\eqref{normOP}.
\end{lemma}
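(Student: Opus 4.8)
The plan is to split an arbitrary $z\in\rmD(A)$ along the direct sum $\rmD(A)=\widetilde\clW_S\oplus\fkZ_S$ of Assumption~\ref{A:sens}, writing $z=z_{\widetilde\clW}+z_{\fkZ}$ with $z_{\widetilde\clW}\coloneqq P_{\widetilde\clW_S}^{\fkZ_S}z\in\widetilde\clW_S$ and $z_{\fkZ}\coloneqq z-z_{\widetilde\clW}\in\fkZ_S$, and then to bound $\norm{z}{V}^2\le 2\norm{z_{\widetilde\clW}}{V}^2+2\norm{z_{\fkZ}}{V}^2$ from above by a suitable combination of $\norm{z}{\rmD(A)}^2$ and $\norm{\clZ_Sz}{\bbR^{S_\sigma}}^2$. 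The two pieces are controlled by the two quantitative ingredients already at hand: the definition~\eqref{normOP} of $C^P_S$ (finite by Lemma~\ref{L:bddCPS}), which gives $\norm{z_{\widetilde\clW}}{\rmD(A)}=\norm{P_{\widetilde\clW_S}^{\fkZ_S}z}{\rmD(A)}\le C^P_S\norm{\clZ_Sz}{\bbR^{S_\sigma}}$ for every $z$ (trivially also when $\clZ_Sz=0$, since then $z_{\widetilde\clW}=0$ by~\eqref{obli_proj}), and the Poincar\'e-type constant $\beta_S$ of Assumption~\ref{A:Poincare}.

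For the component in $\widetilde\clW_S$ I would use the continuous embedding $\rmD(A)\hookrightarrow V$. From the eigenbasis one has $\norm{v}{\rmD(A)}^2\ge\sum_n\alpha_n^2v_n^2\ge\alpha_1\sum_n\alpha_nv_n^2=\alpha_1\norm{v}{V}^2$, i.e.\ $\norm{v}{V}^2\le\alpha_1^{-1}\norm{v}{\rmD(A)}^2$, so that $\norm{z_{\widetilde\clW}}{V}^2\le\alpha_1^{-1}(C^P_S)^2\norm{\clZ_Sz}{\bbR^{S_\sigma}}^2$. For the component in $\fkZ_S$, Assumption~\ref{A:Poincare} gives directly $\norm{z_{\fkZ}}{V}^2\le\beta_S^{-1}\norm{z_{\fkZ}}{\rmD(A)}^2$, while a triangle inequality combined with the $C^P_S$-bound yields $\norm{z_{\fkZ}}{\rmD(A)}^2\le 2\norm{z}{\rmD(A)}^2+2(C^P_S)^2\norm{\clZ_Sz}{\bbR^{S_\sigma}}^2$. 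Putting the two pieces together produces
\[
\norm{z}{V}^2\le \tfrac{4}{\beta_S}\norm{z}{\rmD(A)}^2+\left(2\alpha_1^{-1}+\tfrac{4}{\beta_S}\right)(C^P_S)^2\norm{\clZ_Sz}{\bbR^{S_\sigma}}^2.
\]

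To conclude I multiply by $\xi$ and absorb each term into the right-hand side of the claimed inequality. Since $\beta_S\to+\infty$ by Assumption~\ref{A:Poincare}, I may pick $S_*=\ovlineC{\xi}$ so that $\beta_S\ge 4\xi$ for all $S\ge S_*$, which forces $\tfrac{4\xi}{\beta_S}\le1$ and hence $\xi\tfrac{4}{\beta_S}\norm{z}{\rmD(A)}^2\le\norm{z}{\rmD(A)}^2$. For the observation term, the uniform lower bound $\beta_S\ge\alpha_1$ (valid for every $S$ by the same eigenbasis estimate) gives $\tfrac{4}{\beta_S}\le4\alpha_1^{-1}$, so the coefficient is $\le 6\alpha_1^{-1}\xi(C^P_S)^2$; taking $\lambda\ge\lambda_*(S)\coloneqq 6\alpha_1^{-1}\xi(C^P_S)^2=\ovlineC{\xi,C^P_S}$ finishes the estimate $\xi\norm{z}{V}^2\le\norm{z}{\rmD(A)}^2+\lambda\norm{\clZ_Sz}{\bbR^{S_\sigma}}^2$.

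The only delicate point is cosmetic rather than conceptual: one must ensure the final thresholds come out exactly in the stated form, with $S_*$ depending only on $\xi$ (through the divergence rate of $\beta_S$) and $\lambda_*$ depending only on $\xi$ and $C^P_S$. This is precisely why I separate the two roles of $\beta_S$: its divergence is used once, to swallow the $\rmD(A)$-term with a single $S$-threshold, while its uniform lower bound $\alpha_1$ is used to eliminate the residual $S$-dependence in the coefficient of the observation term, so that $\lambda_*$ can be written as an increasing function of $(\xi,C^P_S)$ alone. No blow-up occurs because $C^P_S<+\infty$ for each fixed $S$ by Lemma~\ref{L:bddCPS}.
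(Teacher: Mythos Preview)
Your proof is correct and follows essentially the same route as the paper: both split $z$ along the direct sum $\rmD(A)=\widetilde\clW_S\oplus\fkZ_S$, use the constant $C^P_S$ to control the $\widetilde\clW_S$-component via the output, and use the divergent Poincar\'e constant $\beta_S$ to control the $\fkZ_S$-component. The only cosmetic difference is that the paper works ``forwards'' by expanding $\norm{z}{\rmD(A)}^2=\norm{\Theta+\vartheta}{\rmD(A)}^2\ge\tfrac12\norm{\Theta}{\rmD(A)}^2-\norm{\vartheta}{\rmD(A)}^2$ and bounding the left side of the claim from below, whereas you work ``backwards'' by bounding $\norm{z}{V}^2$ from above; both arrive at the same threshold $\beta_S\ge4\xi$ for $S_*$, and your explicit tracking of $\alpha_1$ in $\lambda_*$ is in fact slightly more careful than the paper's use of $\norm{\vartheta}{V}^2\le\norm{\vartheta}{\rmD(A)}^2$ (which tacitly assumes $\alpha_1\ge1$).
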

\begin{proof}
Recall the sets of sensors~$W_S$ and auxiliary functions~$\widetilde W_S$, and the subspace~$\fkZ_S=\rmD(A){\textstyle\bigcap}\ker\clZ_S$  in Assumption~\ref{A:sens}, satisfying~$\rmD(A)=\widetilde \clW_S\oplus\fkZ_S$. We can write
\[
z = \Theta + \vartheta, \quad\mbox{with}\quad 
\vartheta\in \widetilde \clW_S \quad\mbox{and}\quad\Theta\in \fkZ_S,
\]
and we find that 
\begin{align}
    \norm{z}{\rmD(A)}^2 + \lambda\norm{\clZ_Sz}{\bbR^{S_\sigma}}^2 &=\norm{\Theta + \vartheta}{D(A)}^2 + \lambda\norm{\clZ_Sz}{\bbR^{S_\sigma}}^2\notag\\
    &=\norm{\Theta}{\rmD(A)}^2+2(\Theta,\vartheta)_{\rmD(A)}+\norm{\vartheta}{\rmD(A)}^2+\lambda\norm{\clZ_Sz}{\bbR^{S_\sigma}}\notag\\
    &\ge \tfrac{1}{2}\norm{\Theta}{\rmD(A)}^2 - \norm{\vartheta}{\rmD(A)}^2 + \lambda\norm{\clZ_Sz}{\bbR^{S_\sigma}}^2.\notag
\end{align}
Then, with~$C^P_S<+\infty$ as in~\eqref{normOP} (cf. Lem.~\ref{L:bddCPS}) and~$\beta_{S}$ as in Assumption~\ref{A:Poincare}, we obtain
\begin{equation}\label{pre-choicelambeta}
\norm{z}{\rmD(A)}^2+\lambda\norm{\clZ_Sz}{\bbR^{S_\sigma}}^2 \ge \tfrac{1}{2}\beta_{S}\norm{\Theta}{V}^2 + (\lambda (C^P_S)^{-2} - 1)\norm{\vartheta}{\rmD(A)}^2.
\end{equation}
Hence, for given $\xi>0$, by choosing
\begin{equation}\notag
S_*\coloneqq\inf_{\widehat S}\{\widehat S\in\bbN_+\mid \min_{S\ge \widehat S}\beta_{S} \ge 4\xi\}
\end{equation}
and, subsequently, 
$S$ and $\lambda$ so that
\begin{equation}\notag
S \ge  S_* \qquad\mbox{and}\qquad \lambda\ge (C^P_S)^2(2\xi+1)
\end{equation}
we have that~$\lambda (C^P_S)^{-2} -1 \ge 2\xi>0$ and, since~$\norm{\vartheta}{V}^2\le\norm{\vartheta}{\rmD(A)}^2$, we arrive at
\begin{equation}\label{lambeta-fin}
\norm{z}{\rmD(A)}^2 +\lambda\norm{\delta(z)}{\bbR^{S_\sigma}}^2 \ge 2\xi\left(\norm{\Theta}{V}^2 + \norm{\vartheta}{V}^2\right)\ge \xi\norm{\Theta+\vartheta}{V}^2=\xi\norm{z}{V}^2,
\end{equation}
which ends the proof.
\end{proof}

 For convenience of reader and to simplify the exposition, we recall next auxiliary results from the literature that we shall use to prove Theorem~\ref{T:main}. 
 The proofs can be found in~\cite{Rod21-jnls}.
We start with auxiliary results for the nonlinear terms~$\clN(t, y)$ and~$\fkN_{y_\ttr}(t, z)=\clN(t, z+y_\ttr)-\clN(t, y_\ttr)$. Let~$\fkG\subset\rmD(A)$ be as in Assumption~\ref{A:N}.
 \begin{lemma}[{\cite[Prop.~3.5]{Rod20-eect}; cf.~\cite[Lem.~3.4]{Rod21-jnls}}]\label{L:NN}
 Let Assumptions~\ref{A:A0sp}, \ref{A:A0cdc}, and~\ref{A:N} hold true, and let~$P\in\clL(H)$. Then, 
there exists a constant $\overline C_{\clN1}=\ovlineC{n,\frac{1}{1-\|\delta_{2}\|},C_\clN,\norm{P}{\clL(H)}}>0$
such that:
  for all~$\widehat\gamma_0>0$, all~$t>0$, 
  and all~$(y_1,y_2)\in \fkG\times \fkG$, 
  we have
\begin{align}
 &2\Bigl( P\left(\clN(t,y_1)-\clN(t,y_2)\right),A(y_1-y_2)\Bigr)_{H}\label{NNyAy}\\
 &\le \widehat\gamma_0 \norm{y_1-y_2}{\rmD(A)}^{2}
  +\!\left(\!1+\widehat\gamma_0^{-\frac{1+\|\delta_2\|}{1-\|\delta_2\|} }\right)
 \!\overline C_{\clN1}\sum\limits_{j=1}^n\norm{y_1-y_2}{V}^{\frac{2\delta_{1j}}{1-\delta_{2j}}}
 \sum\limits_{k=1}^2\norm{y_k}{V}^\frac{2\zeta_{1j}}{1-\delta_{2j}}
 \norm{y_k}{\rmD(A)}^\frac{2\zeta_{2j}}{1-\delta_{2j}}.\notag
\end{align}
 \end{lemma}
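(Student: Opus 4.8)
The plan is to reduce everything to Assumption~\ref{A:N} by Cauchy--Schwarz and then to redistribute the resulting powers by Young's inequality. First I would pass to absolute values: since $P\in\clL(H)$ and since the norm on $\rmD(A)=\rmD(A^1)$ is $\norm{\Bigcdot}{\rmD(A)}=\norm{A\Bigcdot}{H}$, Cauchy--Schwarz in the pivot space~$H$ gives
\[
2\bigl(P(\clN(t,y_1)-\clN(t,y_2)),A(y_1-y_2)\bigr)_H
\le 2\norm{P}{\clL(H)}\,\norm{\clN(t,y_1)-\clN(t,y_2)}{H}\,\norm{y_1-y_2}{\rmD(A)}.
\]
Writing $d\coloneqq y_1-y_2\in\fkG$ and inserting the bound of Assumption~\ref{A:N} for $\norm{\clN(t,y_1)-\clN(t,y_2)}{H}$, the right-hand side is dominated by a finite sum, over $1\le j\le n$ and $k\in\{1,2\}$, of terms
\[
2\norm{P}{\clL(H)}C_\clN\,\norm{y_k}{V}^{\zeta_{1j}}\norm{y_k}{\rmD(A)}^{\zeta_{2j}}\,\norm{d}{V}^{\delta_{1j}}\,\norm{d}{\rmD(A)}^{1+\delta_{2j}},
\]
where the extra unit in the exponent $1+\delta_{2j}$ is exactly the factor $\norm{d}{\rmD(A)}$ produced by Cauchy--Schwarz.

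The heart of the argument is then a single application of Young's inequality to each summand, tuned so as to isolate precisely the quadratic factor $\norm{d}{\rmD(A)}^2$. For fixed~$j$ I would group the term as $\norm{d}{\rmD(A)}^{1+\delta_{2j}}$ times the remaining factor, and apply Young with conjugate exponents
\[
p_j\coloneqq\tfrac{2}{1+\delta_{2j}}\in(1,2]\quad\mbox{and}\quad p_j'\coloneqq\tfrac{2}{1-\delta_{2j}}\in(2,+\infty),
\]
which are admissible precisely because $0\le\delta_{2j}<1$ (a consequence of $\zeta_{2j}+\delta_{2j}<1$ and $\zeta_{2j}\ge0$). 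Raising $\norm{d}{\rmD(A)}^{1+\delta_{2j}}$ to the power $p_j$ returns $\norm{d}{\rmD(A)}^2$, while raising the remaining factor $\norm{y_k}{V}^{\zeta_{1j}}\norm{y_k}{\rmD(A)}^{\zeta_{2j}}\norm{d}{V}^{\delta_{1j}}$ to the power $p_j'$ produces exactly
\[
\norm{d}{V}^{\frac{2\delta_{1j}}{1-\delta_{2j}}}\,\norm{y_k}{V}^{\frac{2\zeta_{1j}}{1-\delta_{2j}}}\,\norm{y_k}{\rmD(A)}^{\frac{2\zeta_{2j}}{1-\delta_{2j}}},
\]
which matches the structure of the asserted right-hand side. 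Introducing a free splitting parameter, the coefficient in front of $\norm{d}{\rmD(A)}^2$ can be made arbitrarily small at the price of a factor scaling like a negative power of that parameter.

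Finally I would settle the constants. As there are only $2n$ summands, I would distribute the target coefficient $\widehat\gamma_0$ among them, so that the total coefficient of $\norm{d}{\rmD(A)}^2$ is at most $\widehat\gamma_0$; the remaining coefficients then scale like $\widehat\gamma_0^{-p_j'/p_j}=\widehat\gamma_0^{-\frac{1+\delta_{2j}}{1-\delta_{2j}}}$, multiplied by the harmless factors $2\norm{P}{\clL(H)}C_\clN$, $n$, and $\tfrac{1}{p_j'}\le\tfrac12$. The only genuine subtlety---and the step I expect to require the most care---is the uniformization of these $j$-dependent powers of $\widehat\gamma_0$ into the single exponent $\tfrac{1+\|\delta_2\|}{1-\|\delta_2\|}$ of the statement: treating the cases $\widehat\gamma_0\le1$ and $\widehat\gamma_0>1$ separately, one checks $\widehat\gamma_0^{-\frac{1+\delta_{2j}}{1-\delta_{2j}}}\le 1+\widehat\gamma_0^{-\frac{1+\|\delta_2\|}{1-\|\delta_2\|}}$ for every~$j$, since $x\mapsto\frac{1+x}{1-x}$ is increasing on $[0,1)$ and $\delta_{2j}\le\|\delta_2\|$. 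Absorbing all constant factors into a single $\overline C_{\clN1}=\ovlineC{n,\frac{1}{1-\|\delta_2\|},C_\clN,\norm{P}{\clL(H)}}$ then yields~\eqref{NNyAy}.
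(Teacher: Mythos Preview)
Your argument is correct and is exactly the standard route: Cauchy--Schwarz in~$H$ to extract the factor~$\norm{d}{\rmD(A)}$, Assumption~\ref{A:N} for the nonlinear increment, then a parametrized Young inequality with exponents~$\tfrac{2}{1+\delta_{2j}}$ and~$\tfrac{2}{1-\delta_{2j}}$ to isolate~$\norm{d}{\rmD(A)}^2$, followed by the uniformization of the~$\widehat\gamma_0$-powers via the monotonicity of~$x\mapsto\tfrac{1+x}{1-x}$. The paper itself does not reprove this lemma; it quotes it from~\cite[Prop.~3.5]{Rod20-eect} and~\cite[Lem.~3.4]{Rod21-jnls}, and your proof coincides with the argument in those references. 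One cosmetic point: when~$\delta_{2j}=0$ your conjugate exponent~$p_j'$ equals~$2$, so the correct range is~$p_j'\in[2,+\infty)$ rather than~$(2,+\infty)$; this does not affect the proof.
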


\begin{lemma}[{\cite[Prop.~3.7]{Rod21-jnls}}]\label{L:fkN}
Let Assumptions~\ref{A:A0sp}, \ref{A:A0cdc}, \ref{A:N}, and~\ref{A:realy} hold true. Then, 
there exist constants $\widetilde C_{\fkN1}>0$, and~$\widetilde C_{\fkN2}>0$
such that: with~$y_{\ttr\fkG}\coloneqq P_\fkG^{\fkG^{\perp\rmD(A)}}y_\ttr$,
  for all~$\widehat\gamma_0>0$, all~$t>0$, 
  all~$(z_1,z_2)\in \fkG\times \fkG$, we have
\begin{align}
 &2\Bigl( \fkN_{y_\ttr}(t,z_1)-\fkN_{y_\ttr}(t,z_2),A(z_1-z_2)\Bigr)_{H}
 \le \widehat\gamma_0 \norm{z_1-z_2}{\rmD(A)}^{2}\label{fkNyAy}\\
 &\hspace*{1em}
   +\!\left(\!1+\widehat\gamma_0^{-\frac{1+\|\delta_2\|}{1-\|\delta_2\|} }\right)
 \!\widetilde C_{\fkN1}\sum\limits_{j=1}^n\norm{z_1-z_2}{V}^{\frac{2\delta_{1j}}{1-\delta_{2j}}}
 \sum\limits_{k=1}^2\norm{y_{\ttr\fkG}+z_k}{V}^\frac{2\zeta_{1j}}{1-\delta_{2j}}
 \norm{y_{\ttr\fkG}+z_k}{\rmD(A)}^\frac{2\zeta_{2j}}{1-\delta_{2j}}.\notag\\
&2\Bigl( \fkN_{y_\ttr}(t,z_1),Az_1\Bigr)_{H}
 \le \widehat\gamma_0 \norm{z_1}{\rmD(A)}^{2}\label{fkNyAy0}\\
 &\hspace*{.0em}  +\widetilde C_{\fkN2}\left(1+\widehat\gamma_0^{-\chi_5 }\right)\!
\left(1+\widehat\gamma_0^{-\frac{(\chi_5+1)\chi_2\chi_4}2  }\right)\!
   \left(1+\norm{y_{\ttr\fkG}}{V}^{\chi_1} \right)\!
  \left(1+\norm{y_{\ttr\fkG}}{\rmD(A)}^{\chi_2}\right)\!
  \left(1+\norm{z_1}{V}^{\chi_3}\right)\!
  \norm{z_1}{V}^2,\notag
  \end{align}
 with~$\widetilde C_{\fkN2}
  =\ovlineC{n,\widetilde C_{\clN1},\dnorm{\zeta_{1}}{},
  \dnorm{\zeta_{2}}{},\frac{1}{1-\dnorm{\delta_{2}}{}},\frac{1}{1-\dnorm{\zeta_{2}+\delta_{2}}{}}}$ and
 \begin{align}
 &\chi_1\coloneqq \tfrac{2\dnorm{\zeta_{1}}{}}{1-\dnorm{\delta_{2}+\zeta_{2}}{}}\ge0,
  \quad &\chi_2&\coloneqq\dnorm{\tfrac{2\zeta_{2}}{1-\delta_{2}}}{}\in[0,2),&&\label{chi12}\\
  &\chi_3\coloneqq
  \tfrac{2\dnorm{\delta_{1}+\delta_{2}+\zeta_{1}+\zeta_{2}}{}-2}{1-\dnorm{\delta_{2}+\zeta_{2}}{}}\ge0,
  \quad&\chi_4&\coloneqq\tfrac{1}{1-\dnorm{\delta_2+\zeta_2}{}  }>1,
  \quad&\chi_5&\coloneqq\tfrac{1+\|\delta_2\|}{1-\|\delta_2\|}>1.\label{chi345}
 \end{align}
  \end{lemma}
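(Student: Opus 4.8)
The plan is to derive both inequalities from Lemma~\ref{L:NN}, which already furnishes the analogous bound for $\clN$ itself, by exploiting the two structural features of the nonlinearity recorded in Assumption~\ref{A:N}: that $\clN(t,\Bigcdot)$ factors through the oblique projection onto $\fkG$, and that the growth exponents satisfy $\zeta_{2j}+\delta_{2j}<1$ and $\delta_{1j}+\delta_{2j}\ge1$.

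First I would reduce $\fkN_{y_\ttr}$ to $\clN$. Since Assumption~\ref{A:N} gives $\clN(t,y)=\clN(t,P_\fkG^{\fkG^{\perp\rmD(A)}}y)$ and since $y_{\ttr\fkG}=P_\fkG^{\fkG^{\perp\rmD(A)}}y_\ttr\in\fkG$, for every $z\in\fkG$ (so that $P_\fkG^{\fkG^{\perp\rmD(A)}}z=z$) one has $\fkN_{y_\ttr}(t,z)=\clN(t,z+y_{\ttr\fkG})-\clN(t,y_{\ttr\fkG})$; consequently, for $(z_1,z_2)\in\fkG\times\fkG$,
\begin{equation}\notag
\fkN_{y_\ttr}(t,z_1)-\fkN_{y_\ttr}(t,z_2)=\clN(t,y_{\ttr\fkG}+z_1)-\clN(t,y_{\ttr\fkG}+z_2),\qquad (y_{\ttr\fkG}+z_1)-(y_{\ttr\fkG}+z_2)=z_1-z_2.
\end{equation}
Inequality~\eqref{fkNyAy} then follows immediately by applying Lemma~\ref{L:NN} with $P=\Id$ and $y_k\coloneqq y_{\ttr\fkG}+z_k\in\fkG$: the left-hand side of~\eqref{NNyAy} becomes the left-hand side of~\eqref{fkNyAy}, while on the right $\norm{y_1-y_2}{V}=\norm{z_1-z_2}{V}$, $\norm{y_1-y_2}{\rmD(A)}=\norm{z_1-z_2}{\rmD(A)}$, and the norms $\norm{y_k}{V}$, $\norm{y_k}{\rmD(A)}$ turn into $\norm{y_{\ttr\fkG}+z_k}{V}$, $\norm{y_{\ttr\fkG}+z_k}{\rmD(A)}$, so that we may take $\widetilde C_{\fkN1}=\overline C_{\clN1}$.

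Second, to obtain~\eqref{fkNyAy0} I would specialize~\eqref{fkNyAy} to $z_2=0$. Because $\fkN_{y_\ttr}(t,0)=\clN(t,y_{\ttr\fkG})-\clN(t,y_{\ttr\fkG})=0$, the left-hand side reduces to $2(\fkN_{y_\ttr}(t,z_1),Az_1)_H$, while on the right the $k=2$ summand contributes $\norm{y_{\ttr\fkG}}{V}^{\frac{2\zeta_{1j}}{1-\delta_{2j}}}\norm{y_{\ttr\fkG}}{\rmD(A)}^{\frac{2\zeta_{2j}}{1-\delta_{2j}}}$ and the $k=1$ summand carries $\norm{y_{\ttr\fkG}+z_1}{V}$ and $\norm{y_{\ttr\fkG}+z_1}{\rmD(A)}$. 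I would then expand each mixed norm by $\norm{y_{\ttr\fkG}+z_1}{X}^p\le\ovlineC{p}(\norm{y_{\ttr\fkG}}{X}^p+\norm{z_1}{X}^p)$ for $X\in\{V,\rmD(A)\}$ and collect monomials in the four quantities $\norm{z_1}{V}$, $\norm{z_1}{\rmD(A)}$, $\norm{y_{\ttr\fkG}}{V}$, $\norm{y_{\ttr\fkG}}{\rmD(A)}$. The crucial observation is that every surviving power of $\norm{z_1}{\rmD(A)}$ equals $\frac{2\zeta_{2j}}{1-\delta_{2j}}$, which is strictly below $2$ precisely because $\zeta_{2j}+\delta_{2j}<1$; hence each such factor can be removed by a Young inequality of the form
\begin{equation}\notag
\norm{z_1}{\rmD(A)}^{\frac{2\zeta_{2j}}{1-\delta_{2j}}}\,M\le\eta\,\norm{z_1}{\rmD(A)}^{2}+C\,\eta^{-\frac{\zeta_{2j}}{1-\delta_{2j}-\zeta_{2j}}}M^{\frac{1-\delta_{2j}}{1-\delta_{2j}-\zeta_{2j}}},
\end{equation}
where $M$ gathers the accompanying $V$-powers of $z_1$ and the $y_{\ttr\fkG}$-factors. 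Choosing $\eta$ so that, after multiplication by the prefactor $(1+\widehat\gamma_0^{-\chi_5})\widetilde C_{\fkN1}$ inherited from~\eqref{fkNyAy}, the first term equals exactly $\widehat\gamma_0\norm{z_1}{\rmD(A)}^2$, one absorbs it into the leading term of~\eqref{fkNyAy0}; this rescaling is what compounds the exponent $\chi_5$ with the Young exponent into the displayed power $\tfrac{(\chi_5+1)\chi_2\chi_4}{2}$ of $\widehat\gamma_0$, with $\chi_4=\tfrac1{1-\dnorm{\delta_2+\zeta_2}{}}$ and $\chi_2,\chi_5$ as in~\eqref{chi12}--\eqref{chi345}.

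It then remains to read off the three norm-exponents. Raising $M$ to the power $\tfrac{1-\delta_{2j}}{1-\delta_{2j}-\zeta_{2j}}$ turns the $\norm{z_1}{V}$-content $a_j+b_j$, with $a_j=\tfrac{2\delta_{1j}}{1-\delta_{2j}}$ and $b_j=\tfrac{2\zeta_{1j}}{1-\delta_{2j}}$, into the power $\tfrac{2(\delta_{1j}+\zeta_{1j})}{1-\delta_{2j}-\zeta_{2j}}$, whose maximum over $j$ is exactly $\chi_3+2$; the $\norm{y_{\ttr\fkG}}{V}$-content is likewise raised to a power whose maximum is $\chi_1$; and the only surviving $\rmD(A)$-power of $y_{\ttr\fkG}$ is $\tfrac{2\zeta_{2j}}{1-\delta_{2j}}$, with maximum $\chi_2<2$. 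Bounding each subcritical monomial $X^\theta\le1+X^{\theta_{\max}}$ and factoring out a single $\norm{z_1}{V}^2$, these powers assemble into $(1+\norm{y_{\ttr\fkG}}{V}^{\chi_1})(1+\norm{y_{\ttr\fkG}}{\rmD(A)}^{\chi_2})(1+\norm{z_1}{V}^{\chi_3})\norm{z_1}{V}^2$, which is~\eqref{fkNyAy0}, with $\widetilde C_{\fkN2}$ of the stated form. I expect the main obstacle to be the bookkeeping in this last stage: keeping the power of $\norm{z_1}{\rmD(A)}$ strictly subcritical (so that Young's inequality yields an absorbable $\widehat\gamma_0\norm{z_1}{\rmD(A)}^2$ rather than an uncontrolled term) while simultaneously tracking how the Young exponent compounds with the factor $1+\widehat\gamma_0^{-\chi_5}$ to produce the exact powers $\chi_4,\chi_5$ of $\widehat\gamma_0$ recorded in~\eqref{fkNyAy0}; the hypotheses $\zeta_{2j}+\delta_{2j}<1$ and $\delta_{1j}+\delta_{2j}\ge1$ of Assumption~\ref{A:N} are exactly what make this accounting close with $\chi_2\in[0,2)$ and $\chi_3\ge0$.
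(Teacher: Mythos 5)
Your proposal is correct, and it is worth recording how it sits relative to the paper: the paper gives no in-text proof of Lemma~\ref{L:fkN} at all --- it cites \cite[Prop.~3.7]{Rod21-jnls} (proved there for the case $\fkG=\rmD(A)$) and, in the remark immediately following the lemma, reduces the general case to that one via the identity $\fkN_{y_\ttr}(t,z)=\fkN_{y_{\ttr\fkG}}(t,z)$ for $z\in\fkG$. Your first step is exactly this reduction, and your derivation of~\eqref{fkNyAy} from Lemma~\ref{L:NN} with $P=\Id$ and $y_k\coloneqq y_{\ttr\fkG}+z_k\in\fkG$ is immediate and correct, giving $\widetilde C_{\fkN1}=\overline C_{\clN1}$; what you add is a self-contained reconstruction of the part the paper outsources to the citation, namely~\eqref{fkNyAy0}, obtained by taking $z_2=0$ (legitimate, since $0\in\fkG$ and $\fkN_{y_\ttr}(t,0)=0$), expanding the mixed norms, and removing every occurrence of $\norm{z_1}{\rmD(A)}^{2\zeta_{2j}/(1-\delta_{2j})}$ by Young's inequality, which is admissible precisely because $\zeta_{2j}+\delta_{2j}<1$. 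Your exponent bookkeeping does close: per $j$ one has the identity $\tfrac{2(\delta_{1j}+\zeta_{1j})}{1-\delta_{2j}-\zeta_{2j}}=2+\tfrac{2(\delta_{1j}+\delta_{2j}+\zeta_{1j}+\zeta_{2j})-2}{1-\delta_{2j}-\zeta_{2j}}$, and the hypothesis $\delta_{1j}+\delta_{2j}\ge1$ places the second summand in $[0,\chi_3]$, so each surviving monomial is dominated by $\left(1+\norm{z_1}{V}^{\chi_3}\right)\norm{z_1}{V}^2$; likewise $\tfrac{2\zeta_{1j}}{1-\delta_{2j}-\zeta_{2j}}\le\chi_1$, $\tfrac{2\zeta_{2j}}{1-\delta_{2j}}\le\chi_2$, and $\tfrac{\zeta_{2j}}{1-\delta_{2j}-\zeta_{2j}}\le\tfrac{\chi_2\chi_4}{2}$ produces the stated power of $\widehat\gamma_0$ once $\eta$ is chosen of order $\widehat\gamma_0\left(1+\widehat\gamma_0^{-\chi_5}\right)^{-1}$. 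Two cosmetic repairs are needed but cost nothing: the maximum over $j$ of $\tfrac{2(\delta_{1j}+\zeta_{1j})}{1-\delta_{2j}-\zeta_{2j}}$ is in general only bounded above by $\chi_3+2$ (a maximum of ratios need not equal the ratio of the maxima, so your ``exactly'' is too strong), but only the upper bound is used; and to end with the single leading term $\widehat\gamma_0\norm{z_1}{\rmD(A)}^{2}$ you should invoke~\eqref{fkNyAy} with parameter $\widehat\gamma_0/2$ and spend the remaining $\widehat\gamma_0/2$ in the Young absorption, the resulting factors of $2$ being harmlessly absorbed into the generic constant $\widetilde C_{\fkN2}$ and into the prefactors $\left(1+\widehat\gamma_0^{-\chi_5}\right)\left(1+\widehat\gamma_0^{-(\chi_5+1)\chi_2\chi_4/2}\right)$.
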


\begin{remark}
With~$z_i\in\fkG$, $i\in\{1,2\}$, since~$\fkG$ is a closed subspace of~$\rmD(A)$ and $\clN(t, y_{\ttr})=\clN(t, y_{\ttr\fkG})$, we also find that~$\fkN_{y_\ttr}(t, z_i)=\clN(t, z_i+{y_\ttr})-\clN(t, {y_\ttr})=\fkN_{y_{\ttr\fkG}}(t, z_i)$. Thus, Assumptions~\ref{L:NN} and~\ref{L:fkN} follow as corollaries of those in~\cite[Prop.~3.7]{Rod21-jnls} shown for the case~$\fkG=\rmD(A)$.
\end{remark}
Finally, we recall auxiliary results used to analyze the stability of the error dynamics.
\begin{lemma}[{\cite[Prop.~3.10]{Rod21-jnls}}]\label{L:maxpoly1r}
 Let~$\eta_1>0$, $\eta_2>0$  and~$\fks\in(0,1)$. Then
 \[
 \max_{\tau\ge0}\{-\eta_1\tau+\eta_2\tau^\fks \}
 =(1-\fks)\fks^\frac{s}{1-s} \eta_2^\frac{1}{1-\fks}\eta_1^\frac{\fks}{\fks-1}.
 \]
\end{lemma}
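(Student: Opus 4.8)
The plan is to treat this as an elementary one-variable calculus maximization. First I would define the scalar function $f(\tau)\coloneqq-\eta_1\tau+\eta_2\tau^\fks$ on $[0,+\infty)$ and record the three qualitative facts that guarantee the supremum is a genuine maximum attained at an interior point. The function $f$ is continuous with $f(0)=0$; since $\fks\in(0,1)$ the sublinear term dominates near the origin, so $f(\tau)>0$ for all small $\tau>0$ and hence $\sup f>0=f(0)$; and writing $f(\tau)=\tau^\fks(\eta_2-\eta_1\tau^{1-\fks})$ shows that $f(\tau)\to-\infty$ as $\tau\to+\infty$, because $\eta_1>0$ forces $\eta_1\tau^{1-\fks}\to+\infty$. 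Together these three observations force the maximum to be attained at some interior $\tau_*>0$ where $f'(\tau_*)=0$.

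Next I would locate the critical point. Differentiating gives $f'(\tau)=-\eta_1+\fks\eta_2\tau^{\fks-1}$, which vanishes exactly when $\tau^{1-\fks}=\fks\eta_2/\eta_1$, that is, at the unique point $\tau_*=(\fks\eta_2/\eta_1)^{1/(1-\fks)}$. Since $\fks-1<0$, the map $\tau\mapsto\tau^{\fks-1}$ is strictly decreasing on $(0,+\infty)$, so $f'$ is strictly decreasing and changes sign from $+$ to $-$ as $\tau$ crosses $\tau_*$; this confirms that $\tau_*$ is the unique maximizer (a second-derivative check would serve equally well).

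Finally I would substitute and simplify. The cleanest route is to feed the critical-point identity $\eta_1\tau_*=\fks\eta_2\tau_*^\fks$ directly into $f(\tau_*)=-\eta_1\tau_*+\eta_2\tau_*^\fks$, which immediately collapses to $f(\tau_*)=(1-\fks)\eta_2\tau_*^\fks$. It then remains to insert $\tau_*^\fks=(\fks\eta_2/\eta_1)^{\fks/(1-\fks)}$ and to collect the powers of $\eta_1$, $\eta_2$, and $\fks$, using the exponent arithmetic $1+\tfrac{\fks}{1-\fks}=\tfrac{1}{1-\fks}$ and $-\tfrac{\fks}{1-\fks}=\tfrac{\fks}{\fks-1}$; this yields the asserted value $(1-\fks)\fks^{\fks/(1-\fks)}\eta_2^{1/(1-\fks)}\eta_1^{\fks/(\fks-1)}$. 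There is no real obstacle: the statement is a routine optimization, and the only place where care is needed is the bookkeeping of exponents in this last step.
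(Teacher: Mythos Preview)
Your argument is correct: this is indeed a routine one-variable optimization, and your treatment of existence of the maximizer, uniqueness via monotonicity of $f'$, and the exponent bookkeeping are all in order. Note that the paper does not actually prove this lemma; it is quoted from~\cite[Prop.~3.10]{Rod21-jnls} and the surrounding text explicitly says the proofs of these auxiliary results ``can be found in~\cite{Rod21-jnls}'', so there is no in-paper proof to compare against---your elementary calculus derivation is exactly the kind of argument one expects for such a statement.
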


 \begin{lemma}[{\cite[Prop.~3.11]{Rod21-jnls}}]\label{L:ode-stab0}
 Let~$T>0$, $C_h>0$,  $\fkr>1$, and~$h\in L^\fkr_{\rm loc}(\bbR_0,\bbR)$ satisfy
 \[\label{ode-h}
  \sup_{s\ge0}\norm{h}{L^\fkr((s,s+T),\bbR)}= C_h\le+\infty.
 \]
Let also, $\mu>0$, and~$\varrho>1$. Then,
for every scalar~$\overline\mu>0$ satisfying
\[\label{ode-condstab0}
 \overline\mu  \ge \max\left\{2\tfrac{\fkr-1}{\fkr}
 \left(\tfrac{C_h^\fkr}{\fkr\log(\varrho)}\right)^\frac{1}{\fkr-1}, 2\mu\right\}+T^\frac{-1}{\fkr}C_h,
\]
we have that the scalar~{\sc ode} system
\[\label{ode-nonl-p0}
 \dot v=-(\overline\mu-\norm{h}{\bbR})v,\quad v(0)=v_0,
\]
is exponentially stable with rate $-\mu$ and transient bound~$\varrho$. For every~$v_0\in\bbR$, 
\[\label{ode-v-stab}
 \norm{v(t)}{}=\varrho\ex^{-\mu(t-s)}\norm{v(s)}{},\quad t\ge s\ge 0,\quad v(0)=v_0.
\]
\end{lemma}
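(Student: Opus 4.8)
The plan is to integrate the scalar linear equation explicitly and then control the accumulated contribution of $\norm{h}{\bbR}$ by combining Hölder's inequality with the periodic $L^\fkr$-bound, reducing the whole statement to the elementary optimization in Lemma~\ref{L:maxpoly1r}. Since $\overline\mu-\norm{h}{\bbR}\in L^1_{\rm loc}(\overline\bbR_+,\bbR)$, the equation in~\eqref{ode-nonl-p0} has the unique absolutely continuous solution
\[
v(t)=v(s)\exp\Bigl(-\overline\mu(t-s)+{\textstyle\int_s^t}\norm{h(\theta)}{\bbR}\,\ed\theta\Bigr),\qquad t\ge s\ge0,
\]
so establishing~\eqref{ode-v-stab} (in the form of the transient-bound inequality ``$\le$'') amounts to showing
\[
-\overline\mu(t-s)+{\textstyle\int_s^t}\norm{h(\theta)}{\bbR}\,\ed\theta\le\log\varrho-\mu(t-s).
\]

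Next I would bound the integral term. Writing $\tau\coloneqq t-s$ and covering $(s,t)$ by $\lceil\tau/T\rceil\le\tau/T+1$ intervals of length $T$, the assumption $\sup_{s\ge0}\norm{h}{L^\fkr((s,s+T),\bbR)}=C_h$ gives ${\textstyle\int_s^t}\norm{h}{\bbR}^\fkr\,\ed\theta\le(\tau/T+1)C_h^\fkr$; Hölder's inequality on $(s,t)$ together with the subadditivity of $x\mapsto x^{1/\fkr}$ (valid as $\fkr>1$) then yields
\[
{\textstyle\int_s^t}\norm{h(\theta)}{\bbR}\,\ed\theta\le C_h(\tau/T+1)^{1/\fkr}\tau^{1-1/\fkr}\le C_hT^{-1/\fkr}\tau+C_h\tau^{1-1/\fkr}.
\]
Substituting this split, the desired inequality reduces to $-\eta_1\tau+\eta_2\tau^\fks\le\log\varrho$ for all $\tau\ge0$, with $\eta_1\coloneqq\overline\mu-C_hT^{-1/\fkr}-\mu$, $\eta_2\coloneqq C_h$, and $\fks\coloneqq1-\tfrac1\fkr\in(0,1)$.

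Denoting by $M$ the first term inside the maximum in~\eqref{ode-condstab0}, I would observe that the hypothesis $\overline\mu\ge\max\{M,2\mu\}+T^{-1/\fkr}C_h$ forces $\eta_1>0$: indeed $\max\{M,2\mu\}\ge\tfrac12 M+\mu$, whence $\eta_1\ge\tfrac12 M>0$. This positivity lets me invoke Lemma~\ref{L:maxpoly1r}, which evaluates $\max_{\tau\ge0}\{-\eta_1\tau+\eta_2\tau^\fks\}$ in closed form; with the present exponents it equals $\tfrac1\fkr\bigl(\tfrac{\fkr-1}\fkr\bigr)^{\fkr-1}C_h^\fkr\,\eta_1^{-(\fkr-1)}$, which is $\le\log\varrho$ precisely when $\eta_1\ge\tfrac{\fkr-1}\fkr\bigl(\tfrac{C_h^\fkr}{\fkr\log\varrho}\bigr)^{1/(\fkr-1)}=\tfrac12 M$. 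Since the assumed lower bound on $\overline\mu$ delivers exactly $\eta_1\ge\tfrac12 M$, the target inequality holds for every $\tau\ge0$, completing the argument.

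The one genuinely delicate point is the bookkeeping that converts the periodic $L^\fkr$-control into the clean decomposition ``$C_hT^{-1/\fkr}\tau+C_h\tau^{1-1/\fkr}$'': one must keep the transient term $C_h\tau^{1-1/\fkr}$ separate from the asymptotic-rate term $C_hT^{-1/\fkr}\tau$ rather than crudely bounding the whole integral by a constant, because it is exactly the sublinear growth $\tau^{1-1/\fkr}$ that gets absorbed by the optimization of Lemma~\ref{L:maxpoly1r} and matches the quantity $M$, while the genuinely linear part is what forces the summand $T^{-1/\fkr}C_h$ in the condition~\eqref{ode-condstab0} on $\overline\mu$.
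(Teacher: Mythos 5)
Your proposal is correct: the explicit integration, the H\"older-plus-covering bound $\int_s^t\norm{h}{\bbR}\,\ed\theta\le C_hT^{-1/\fkr}\tau+C_h\tau^{1-1/\fkr}$, and the reduction to the optimization of Lemma~\ref{L:maxpoly1r} with $\fks=1-\tfrac1\fkr$ all check out, and the threshold $\eta_1\ge\tfrac{\fkr-1}{\fkr}\bigl(\tfrac{C_h^\fkr}{\fkr\log\varrho}\bigr)^{1/(\fkr-1)}$ matches the condition on $\overline\mu$ exactly (the ``$=$'' in the statement being an evident typo for ``$\le$''). The paper itself defers the proof to \cite[Prop.~3.11]{Rod21-jnls}, and your route is precisely the one the paper signals by recalling Lemma~\ref{L:maxpoly1r} immediately beforehand: the additive term $T^{-1/\fkr}C_h$ in the hypothesis absorbs your linear part and the maximum in the hypothesis handles the sublinear part, so this is essentially the same argument.
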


 \begin{lemma}[{\cite[Prop.~3.12]{Rod21-jnls}}]\label{L:ode-stab}
 Let~$T>0$, $C_h>0$,   $\fkr>1$, and~$h\in L^\fkr_{\rm loc}(\bbR_0,\bbR)$ satisfying~\eqref{ode-h}.
Let also $R>0$, $p>0$, $\mu>0$, $\varrho>1$, and~$c>1$. Then the  scalar~{\sc ode}
\begin{equation}\label{ode-nonl}
 \dot\varpi=-(\overline\mu-\norm{h}{\bbR}(1+\norm{\varpi}{\bbR}^p))\varpi,\quad \varpi(0)=\varpi_0,
\end{equation}
is exponentially stable with transient bound~$\varrho$ and rate~$-\mu_0<-\mu$ as
\begin{equation}\notag
 \mu_0\coloneqq\max\left\{\mu,\tfrac{\log(2)}{pT},
 \left(\tfrac{\varrho^{2p+1}R^{p}C_h}{\varrho^\frac12-1}\right)^{\frac{\fkr}{\fkr-1}}
 \left(\tfrac{\fkr-1}{\fkr}\right)2^\frac{1}{\fkr-1}
 ,2^\frac{\fkr+1}{\fkr-1} \left(
  \varrho^{2p+\frac12} C_h\tfrac{p+1}{p}R^pc\right)^\frac{\fkr}{\fkr-1} p^\frac{1}{\fkr-1}\right\},
\end{equation}
if 
\begin{align}\notag
&\norm{\varpi_0}{}\le R\quad\mbox{and}\quad\overline\mu 
\ge \overline\mu_*\coloneqq \max\left\{2\tfrac{\fkr-1}{\fkr}
\left(\tfrac{2C_h^\fkr}{\fkr\log(\varrho)}\right)^\frac{1}{\fkr-1}, 4\mu_0\right\}+T^\frac{-1}{\fkr}C_h.
\end{align}
That is, the solution satisfies
\[\label{ode-stab}
 \norm{\varpi(t)}{\bbR}\le\varrho\ex^{-\mu_0 (t-s)}\norm{\varpi(s)}{\bbR},\quad\mbox{for all}\quad
 t\ge s\ge 0,\quad\mbox{if}\quad\norm{\varpi_0}{}<R.
\]
\end{lemma}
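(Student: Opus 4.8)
The plan is to reduce the nonlinear equation~\eqref{ode-nonl} to the linear comparison equation governed by Lemma~\ref{L:ode-stab0}, via an a~priori-boundedness bootstrap that tames the superlinear term for as long as the solution stays in a fixed ball. Throughout I write $v\coloneqq|\varpi|$ and $v_0\coloneqq|\varpi_0|\le R$.

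First I would dispose of the trivial case: if $\varpi_0=0$ then $\varpi\equiv0$ by uniqueness and the claim is immediate. For $\varpi_0\ne0$, Carath\'eodory theory applies (for fixed $t$ the right-hand side is $C^1$, hence locally Lipschitz, in $\varpi$, and $h\in L^\fkr_{\rm loc}\subset L^1_{\rm loc}$), giving a unique maximal absolutely continuous solution; by uniqueness $\varpi$ never vanishes, so its sign is constant and $v>0$ solves $\dot v=-(\overline\mu-|h|(1+v^p))v$. I would record the integrating-factor identity
\begin{equation}\notag
v(t)=v(s)\exp\Bigl(-{\textstyle\int_s^t}\bigl(\overline\mu-|h(\tau)|(1+v(\tau)^p)\bigr)\,\ed\tau\Bigr),\qquad t\ge s,
\end{equation}
and emphasize that global existence is \emph{not} free: expanding $-(\overline\mu-|h|(1+v^p))v=-(\overline\mu-|h|)v+|h|v^{p+1}$ exhibits the superlinear, blow-up-prone term $|h|v^{p+1}$, so global existence will come out of the a~priori bound below.

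Next I would run a continuity bootstrap on the target estimate itself. Let $\theta^*$ be the supremum of times up to which the solution exists and the bound $v(t)\le\varrho\ex^{-\mu_0 t}v_0$ holds, and suppose for contradiction $\theta^*<+\infty$. On $[0,\theta^*)$ we then have $v\le\varrho v_0\le\varrho R$, hence $v^p\le(\varrho R)^p$ and the coefficient is dominated, $|h|(1+v^p)\le(1+(\varrho R)^p)|h|$. The integrating-factor identity then gives $v(t)\le v(s)\exp\bigl(-\int_s^t(\overline\mu-(1+(\varrho R)^p)|h|)\bigr)$, whose right-hand side is exactly the flow of the \emph{linear} equation of Lemma~\ref{L:ode-stab0} with $h$ replaced by $(1+(\varrho R)^p)h$, i.e.\ with window constant $\widehat C_h\coloneqq(1+(\varrho R)^p)C_h$. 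Applying Lemma~\ref{L:ode-stab0} with data $(\widehat C_h,\mu_0,\varrho^{1/2})$ in place of $(C_h,\mu,\varrho)$---noting $\log(\varrho^{1/2})=\tfrac12\log\varrho$ produces the factor $2$ appearing in $\overline\mu_*$---yields $v(t)\le\varrho^{1/2}\ex^{-\mu_0(t-s)}v(s)$ on $[0,\theta^*)$. Taking $s=0$ gives $v(t)\le\varrho^{1/2}\ex^{-\mu_0 t}v_0<\varrho\ex^{-\mu_0 t}v_0$, a \emph{strict} improvement of the bootstrap inequality; by continuity the bound persists beyond $\theta^*$, contradicting maximality and simultaneously excluding blow-up. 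Hence the solution is global, and the estimate (with the better constant $\varrho^{1/2}\le\varrho$) holds for all $t\ge s\ge0$, which is the asserted bound.

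The hard part is the constant bookkeeping: one must check that $\overline\mu\ge\overline\mu_*$ is \emph{precisely} the threshold demanded by Lemma~\ref{L:ode-stab0} once the data are inflated to $(\widehat C_h,\mu_0,\varrho^{1/2})$, the factor $(1+(\varrho R)^p)^{\fkr/(\fkr-1)}$ coming from $\widehat C_h^\fkr$ being absorbed into the $4\mu_0$ branch of $\overline\mu_*$. The crude bound $v^p\le(\varrho R)^p$ above produces an estimate of the stated \emph{form} but with suboptimal powers of $\varrho$; the sharp constants---the terms $\tfrac{\log 2}{pT}$, $\bigl(\tfrac{\varrho^{2p+1}R^pC_h}{\varrho^{1/2}-1}\bigr)^{\fkr/(\fkr-1)}$, and $\bigl(\varrho^{2p+1/2}C_h\tfrac{p+1}{p}R^pc\bigr)^{\fkr/(\fkr-1)}$ inside $\mu_0$---require the finer, self-consistent estimate $v(\tau)^p\le(\varrho\ex^{-\mu_0\tau}R)^p$ inserted into $\int_s^t|h|v^p$ and bounded window-by-window over intervals of length $T$ by H\"older, so that the decay already being bootstrapped feeds back to shrink the nonlinear contribution.
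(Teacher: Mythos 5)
Your reduction strategy --- sign constancy and positivity of $v=|\varpi|$, the integrating-factor identity, and a strict-improvement bootstrap that feeds the uniform bound $v\le\varrho R$ into the linear Lemma~\ref{L:ode-stab0} applied with transient bound $\varrho^{1/2}$ (whence, as you correctly note, the factor $2$ inside the first branch of $\overline\mu_*$, via $\log(\varrho^{1/2})=\tfrac12\log\varrho$) --- is sound as a qualitative argument, and it is structurally close to the intended route: note the paper itself does not reprove this lemma but cites \cite[Prop.~3.12]{Rod21-jnls}, whose proof likewise runs through the linear Proposition~3.11 with a halved transient. Your bootstrap logic is also correctly executed: boundedness of $v$ on $[0,\theta^*)$ rules out blow-up at $\theta^*$, the comparison yields the strictly better constant $\varrho^{1/2}$, contradiction with maximality gives global existence, and the version for all $t\ge s\ge0$ follows once the uniform bound holds globally.

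The genuine gap is the quantitative closure, which is the actual content of the lemma. With your crude inflation $\widehat C_h\coloneqq(1+(\varrho R)^p)C_h$, the application of Lemma~\ref{L:ode-stab0} with data $(\widehat C_h,\mu_0,\varrho^{1/2})$ requires
\begin{equation}\notag
\overline\mu\;\ge\;\max\Bigl\{2\tfrac{\fkr-1}{\fkr}\bigl(\tfrac{2\widehat C_h^{\,\fkr}}{\fkr\log\varrho}\bigr)^{\frac{1}{\fkr-1}},\;2\mu_0\Bigr\}+T^{-\frac{1}{\fkr}}\widehat C_h,
\end{equation}
and this is \emph{not} implied by $\overline\mu\ge\overline\mu_*$, whose first branch carries $C_h^{\fkr}$, not $\widehat C_h^{\,\fkr}=(1+(\varrho R)^p)^{\fkr}C_h^{\fkr}$: the entire excess --- a multiplicative factor $(1+(\varrho R)^p)^{\fkr/(\fkr-1)}$ in the first branch and an additive $(\varrho R)^pT^{-1/\fkr}C_h$ in the last term --- must be absorbed into the margin between $4\mu_0$ and $2\mu_0$ using the specific third and fourth branches of $\mu_0$ (a Young-inequality argument against $\mu_0\ge\tfrac{\log 2}{pT}$ plausibly handles the additive term, but the multiplicative one requires a genuine case analysis across the regimes of $\varrho$, $R$, $C_h$). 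You assert this absorption in a single clause and then concede that the crude bound produces ``suboptimal powers of $\varrho$'' and that the stated constants require the self-consistent estimate $v(\tau)^p\le(\varrho\ex^{-\mu_0\tau}R)^p$ inserted into $\int_s^t|h|v^p$ and summed window-by-window; that is, you acknowledge that the argument you actually executed does not yield the lemma as stated, only a variant with different, unverified thresholds. Symptomatically, your proof never uses the branch $\mu_0\ge\tfrac{\log 2}{pT}$ (which in the cited proof controls the geometric sum $\sum_{k\ge0}\ex^{-p\mu_0 kT}\le2$ over windows of length $T$) nor the parameter $c>1$ appearing in the fourth branch of $\mu_0$; both enter precisely through the windowed treatment of the nonlinear contribution that your final paragraph sketches but does not carry out. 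As submitted, the proposal establishes exponential stability with \emph{some} admissible pair $(\mu_0,\overline\mu_*)$ of the same general shape, not the statement with the displayed constants.
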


\subsection{Proof of the main Theorem~\ref{T:main}}\label{sS:proofT:main}
Observe that, from~\eqref{sys-z}, we obtain
\begin{align} 
 \tfrac{\ed}{\ed t}\norm{z}{V}^2
 &=2\left(-  Az
  -A_{\rm rc}z-\fkN_{y_\ttr}(z)
  -\lambda A^{-1}\overline\fkI_{S}\clZ_Sz,Az\right)_H.\label{dtz1}
 \end{align}
 Using Assumptions~\ref{A:A0sp}--\ref{A:A1}
 and the Young inequality, we find for
 all~$\gamma_1>0$,
  \begin{align} 
 &2\left(-  Az
  -A_{\rm rc}z,Az\right)_H
 \le-(2-\gamma_1)\norm{z}{\rmD(A)}^2+\gamma_1^{-1}C_{\rm rc}^2\norm{z}{V}^2
\label{thetas-est1}
  \end{align}
 and, due to Assumption~\ref{A:Inj},
 \begin{align} 
 2\left(-\lambda A^{-1}\overline\fkI_{S}\clZ_Sz,Az\right)_H
 &\le -\lambda\norm{\clZ_Sz}{\bbR^{M_\sigma}}^2.
 \label{thetas-est2}
  \end{align}
 
For the nonlinear term, proceeding as in~\cite[sect.~3.2]{Rod21-jnls}, 
using~\eqref{fkNyAy0} and the Young inequality we find that for all~$\gamma_2\in\bbR_0$,  with~$y_{\ttr\fkG}\coloneqq P_\fkG^{\fkG^{\perp\rmD(A)}}y_\ttr$,
\begin{subequations}\label{thetas-est3}
 \begin{align} 
 &2\Bigl( \fkN_{y_\ttr}(t,z),Az\Bigr)_{H}\le \gamma_2 \norm{z}{\rmD(A)}^{2}+\widehat C
   \Psi(y_{\ttr\fkG})
  \left(1+\norm{z}{V}^{\chi_3}\right)
  \norm{z}{V}^2,\\
  \mbox{with}\quad&\widehat C=\ovlineC{n,\widetilde C_{\clN1},\dnorm{\zeta_{1}}{},
  \dnorm{\zeta_{2}}{},\frac{1}{1-\dnorm{\delta_{2}}{}},
  \frac{1}{1-\dnorm{\zeta_{2}+\delta_{2}}{}},\frac{1}{\gamma_3}},\\
 \mbox{and}\quad&\Psi(y_{\ttr\fkG})\coloneqq\left(1+\norm{y_{\ttr\fkG}}{V}^{\chi_1} \right)
  \left(1+\norm{y_{\ttr\fkG}}{\rmD(A)}^{\chi_2}\right).
   \end{align}
\end{subequations}

Combining~\eqref{dtz1}, \eqref{thetas-est1}, \eqref{thetas-est2}, and~\eqref{thetas-est3}, 
 it follows that
 \begin{subequations}\label{dtz2}
 \begin{align} 
 \tfrac{\ed}{\ed t}\norm{z}{V}^2
 &\le -\left(2-\gamma_1- \gamma_2\right)\norm{z}{\rmD(A)}^2
 +2C_{\rm rc}^2\norm{z}{V}^2
 -\lambda\norm{\clZ_Sz}{\bbR^{M_\sigma}}^2
 \notag\\
 &\quad+\widehat C   \Psi(y_{\ttr\fkG})  \left(1+\norm{z}{V}^{\chi_3}\right)  \norm{z}{V}^2,
\end{align}
\end{subequations}

Next, choosing~$(\gamma_1,\gamma_2)=(\tfrac{1}{2},\tfrac{1}{2})$, and using Lemma ~\ref{L:MlamPoinc} with
~$\xi\coloneqq\overline\mu + 2C_{\rm rc}^2$, we obtain
\begin{equation}
-\left(2-\gamma_1- \gamma_2\right)\norm{z}{\rmD(A)}^2
 +2C_{\rm rc}^2\norm{z}{V}^2
 -\lambda\norm{\clZ_Sz}{\bbR^{M_\sigma}}^2\le-\overline\mu\norm{z}{V}^2,\notag
\end{equation}
for all~$S\ge S_*=\ovlineC{\overline\mu,C_{\rm rc}}\in\bbN_+$ and for all~$\lambda\ge0$ such that~$\lambda\ge\lambda_*(S)=\ovlineC{\overline\mu,C_{\rm rc}, C^P_S}$ with~$ C^P_S$  as in ~\eqref{normOP}. Hence, we can write
 \begin{subequations}\label{dtz5}
 \begin{align}
 &\tfrac{\ed}{\ed t}\norm{z}{V}^2
 \le -\Bigl(\overline\mu
 -\norm{h(y_{\ttr\fkG})}{}\left(1+\norm{z}{V}^{\chi_3}\right)\Bigr)\norm{z}{V}^2,\qquad \norm{z(0)}{V}^2=\norm{z_0}{V}^2,\\
 \intertext{with}
&\norm{h(y_{\ttr\fkG})}{}=h(y_{\ttr\fkG})\coloneqq\widehat C   \Psi(y_{\ttr\fkG})\in L^{\fkr}_{\rm loc}(\bbR_0,\bbR),
\qquad \fkr\coloneqq\tfrac{2}{\chi_2}>1,\\
&\norm{h(y_{\ttr\fkG})}{L^{\fkr}((s,s+\tau_{y_\ttr}),\bbR)}= \widehat C\norm{\Psi(y_{\ttr\fkG})}{L^{\fkr}((s,s+\tau_{y_\ttr}),\bbR)}\notag\\
&\hspace*{4em}\le \widehat C\norm{1+\norm{y_{\ttr\fkG}}{V}^{\chi_1} }{L^{\infty}((s,s+\tau_{y_\ttr}),\bbR)}
  \norm{1+\norm{y_{\ttr\fkG}}{\rmD(A)}^{\chi_2}}{L^{\fkr}((s,s+\tau_{y_\ttr}),\bbR)}\notag\\
  &\hspace*{4em}\le \widehat C(1+C_{y_\ttr}^{\chi_1})
  \left(\tau_{y_\ttr}^\frac{1}{\fkr}+\norm{y_{\ttr\fkG}}{L^{2}((s,s+\tau_{y_\ttr}),\rmD(A))}^\frac{2}{\fkr}\right)\eqqcolon C_h,
 \end{align}
 \end{subequations}
 and see that~$\varpi\coloneqq\norm{z}{V}^2$ satisfies system~\eqref{ode-nonl},
 with~$h=h(y_{\ttr\fkG})$ and~$p=\chi_3\ge0$.
 
 Let us be given arbitrary~$\varrho>1$ and~$\mu>0$.
 In the case~$p>0$, we use
 Lemma~\ref{L:ode-stab} to conclude that the norm satisfies
 \begin{equation}\label{final.expp}
 \norm{z(t)}{V}^2\le\varrho\ex^{-\mu (t-s)}\norm{z(s)}{V}^2,\quad\mbox{for}\quad
 t\ge s\ge 0,\quad\mbox{and}\quad\norm{z(0)}{V}^2<R, \qquad p>0,
  \end{equation}
provided we take~$\overline\mu$ large enough.

In the case~$p=0$, we use
 Lemma~\ref{L:ode-stab0} to conclude that the norm satisfies
  \begin{equation}\label{final.expp0}
 \norm{z(t)}{V}^2\le\varrho\ex^{-\mu (t-s)}\norm{z(s)}{V}^2,\quad\mbox{for}\quad
 t\ge s\ge 0,\quad\mbox{and}\quad z(0)\in V,  \qquad p=0,
  \end{equation}
provided we take~$\overline\mu$ large enough.

In particular~\eqref{final.expp} actually holds for all~$p\ge0$: we have that
 \begin{equation}\label{final.exp}
 \norm{z(t)}{V}^2\le\varrho\ex^{-\mu (t-s)}\norm{z(s)}{V}^2,\quad\mbox{for all}\quad
 t\ge s\ge 0,\quad\mbox{and all}\quad\norm{z(0)}{V}^2<R,
 \end{equation}
provided we take~$\overline\mu$ large enough.
That is, provided we take a large enough~$S$
and a large enough~$\lambda$.
Finally, note that from Lemma~\ref{L:ode-stab},  it is enough
to take
 \begin{equation}\notag
\overline\mu=
\ovlineC{b},\qquad\mbox{with}\quad b\coloneqq(\mu,\tfrac1{\tau_{y_\ttr}},\varrho,\tfrac1{\varrho^\frac12-1},\tfrac{\fkr+1}{\fkr-1},R,C_h,\chi_).
 \end{equation}
For that, using~\eqref{dtz5}, it is enough to choose, firstly~$S\ge S^*$ with~$S_*$ in the
form
\[S_*=\ovlineC{a},\qquad\mbox{with}\quad a=(R,\mu,\varrho,\tfrac1{\varrho^\frac12-1},\tfrac1{\tau_{y_\ttr}},\tau_{y_\ttr},\chi_1,\tfrac{2+\chi_2}{2-\chi_2},\chi_3,
C_{\rm rc},C_{y_\ttr})\]
and subsequently~$\lambda\ge\lambda^*(S)$ with~$\lambda^*(S)$ in the
form~$\lambda^*(S)=\ovlineC{a; C^P_S}$.

We can finish the proof by recalling~\eqref{chi12} and~\eqref{chi345}.
\qed

\subsection{On the existence and uniqueness of solutions for the error}\label{sS:unique-error}
The existence of a solution can be proven as a weak limit of solutions of appropriate finite-dimensional Galerkin approximations. In fact, we can follow the arguments in~\cite[sect.~3.4]{Rod21-jnls}; with the exception of the output injection operator, which in~\cite{Rod21-jnls} is based on sensors in~$H$, while in this manuscript the sensors are taken in a larger space~$V'\supseteq H$. So, let us consider a Galerkin approximation as
\begin{subequations}\label{sys-error-Gal}
\begin{align} 
 &\dot{z}^N +Az^N+P_{\clE^\rmf_N} A_{\rm rc}(t)z^N +P_{\clE^\rmf_N}\fkN_{y_\ttr}(t,z^N)
 =-\lambda P_{\clE^\rmf_N}A^{-1}\overline\fkI_{S}\clZ_{S} z^N,\quad t\ge 0,\label{sys-error-dyn-Gal}\\
 &z^N(0)=P_{\clE^\rmf_N}z_0\in V,\label{sys-error-ic-Gal}
\end{align}
\end{subequations}
 where~$P_{\clE^\rmf_N}\in\clL(H)$ is the orthogonal projection in~$H$ onto the
 space~$\clE^\rmf_N\coloneqq\linspan\{e_n\mid 1\le n\le N\}$ spanned by the first
eigenfunctions of~$A$.

Proceeding as in~\cite[sect.~3.4]{Rod21-jnls} we can conclude that for large enough~$S$ and~$\lambda$ we will have that there exists a weak limit
\begin{equation}\label{st-reg-error}
z^\infty\in L^2((0,s),\rmD(A))\quad\mbox{with}\quad\dot z^\infty\in L^2((0,s),H),
\end{equation}
 for an arbitrary fixed~$s>0$, so that
\begin{equation}
 z^N\xrightharpoonup[L^2((0,s),\rmD(A))]{} z^\infty\qquad\mbox{and}\qquad \dot z^N
 \xrightharpoonup[L^2((0,s),H)]{} \dot z^\infty,
\end{equation}
and also that
\begin{align}
&Az^N+ A_{\rm rc}(t)z^N 
 \xrightharpoonup[L^2((0,s),H)]{} Az^\infty+ A_{\rm rc}(t)z^\infty ,
\\
& P_{E_N}\fkN_{y_\ttr}(t,z^N)\xrightarrow[L^2((0,s),H)]{} \fkN_{y_\ttr}(t,z^\infty).
\end{align}

Now, we simply observe that~$A^{-1}\overline\fkI_{S}\clZ_{S}\in\clL(V,H)$, to obtain
\begin{equation}
-\lambda A^{-1}\overline\fkI_{S}\clZ_{S} z^N
\xrightharpoonup[L^2((0,s),H)]{}  -\lambda A^{-1}\overline\fkI_{S}\clZ_{S} z^\infty.
\end{equation}

Then, we can follow the arguments in~\cite[sect.~3.4]{Rod21-jnls} to conclude that~$z^\infty$ is the unique solution for the error dynamics satisfying~\eqref{st-reg-error} for all~$s>0$.

\subsection{On the existence and uniqueness of solutions for systems~\eqref{sys-y} and~\eqref{sys-haty}}
\label{sS:unique-yhay}
The solution~$y_\ttr$ for system~\eqref{sys-y} is assumed to exist in Assumption~\ref{A:realy}. Proceeding as in~\cite[Sect.~4.3]{Rod20-eect}, due to the regularity in Assumption~\ref{A:realy},
we can show that such solution~$y_\ttr$ is unique.
From Section~\ref{sS:unique-error} the solution~$z$, given by Theorem~\ref{T:main} for the
error dynamics, is also unique. Consequently,
the solution~$y_\tte=y_\ttr+z$ for~\eqref{sys-haty} exists and is unique as well.

 \section{The Kuramoto--Sivashinsky model for flame propagation}\label{S:KS-satisfAssumpt}
We apply our abstract result to the Kuramoto--Sivashinsky system~\eqref{sys-KS-flame}. Given a Luenberger observer as~\eqref{sys-KS-flame-obs}, we obtain the error dynamics for the error~$z=y_\tte-y_\ttr$ as
\begin{subequations}\label{sys-KS-flame-error1}
  \begin{align}
 &\tfrac{\p}{\p t}z+\nu_2 \Delta^2z+\nu_1 \Delta z+\nu_0\tfrac{1}{2}\norm{\nabla y_\tte}{\bbR^d}^2-\nu_0\tfrac{1}{2}\norm{\nabla y_\ttr}{\bbR^d}^2=\fkI_{S}^{[\lambda,\Lambda]}\clZ_{S}z,\qquad\clG y_\tte\rest{\p\Omega}=0, \\ & z(0,x)= z_0\coloneqq y_\tte(0,x)-y_\ttr(0,x).
\end{align}
\end{subequations}
Hence, we introduce the operators
\begin{subequations}\label{KS-abstract}
\begin{align}
Az&\coloneqq \nu_2 \Delta^2z-2\nu_2\Delta z+\nu_2z;\qquad A_{\rm rc}z\coloneqq (\nu_1 +2\nu_2) \Delta z-\nu_2z; \\
\clN(y)&\coloneqq \tfrac{1}{2}\nu_0\norm{\nabla  y}{\bbR^d}^2;\qquad
 \fkN_{y_\ttr}(z)\coloneqq\nu_0(\nabla y_\ttr,\nabla z)_{\bbR^d}+\tfrac{1}{2}\nu_0\norm{\nabla  z}{\bbR^d}^2.
\end{align}
\end{subequations}
A simple observation gives us
\begin{subequations}\label{KS-abst-conc}
 \begin{align}
Az+A_{\rm rc}z&=\nu_2 \Delta^2z+\nu_1\Delta z,\\
\fkN_{y_\ttr}(z)&=\nu_0\tfrac{1}{2}\norm{\nabla y_\tte}{\bbR^d}^2-\nu_0\tfrac{1}{2}\norm{\nabla y_\ttr}{\bbR^d}^2,
\end{align}
\end{subequations}
hence
the above operators allow us to write system~\eqref{sys-KS-flame-error1} as system~\eqref{sys-z}. Thus it is enough to check our assumptions  to be able to apply the abstract result in Theorem~\ref{T:main} and conclude the Main Result stated in the Introduction.

For simplicity, we restrict ourselves to periodic boundary conditions, which are often considered for the Kuramoto--Sivashinsky model. For other boundary conditions we refer the reader to the discussion in~\cite[sect.~2.1]{RodSeifu22-arx}. For periodic boundary conditions, our spatial domain is the $d$-dimensional torus~$\bbT_L^d\coloneqq\bigtimes\limits_{i=1}^d\frac{L_i}{2\pi}\bbT^1\sim\bigtimes\limits_{i=1}^d[0,L_i)$, where~$\bbT^1=\bbT^1_{2\pi}\sim\{(r_1,r_2)\in\bbR^2\mid r_1^2+r_2^2=1\}\sim[0,2\pi)$ is the one-dimensional torus.

\subsection{Satisfiability of Assumptions~\ref{A:A0sp} and~\ref{A:A0cdc}}
The satisfiability of Assumptions~\ref{A:A0sp} and~\ref{A:A0cdc} has been shown in~\cite[Lems.~2.4 and~2.5]{RodSeifu22-arx}. We recall here the abstract setting.
It is convenient to use the equivalence of the norms of~$\rmD(A)$ and~$V$ to the norms of standard Sobolev spaces~$W^{k,2}(\bbT_L^d)$.
As usual we take the Lebesgue space~$H\coloneqq L^2(\bbT_L^d)$ as pivot space~$H=H'$. Let us
denote~$\clW\coloneqq W^{1,2}(\bbT_L^d)$, and consider the shifted Laplacian operator~$\clA\coloneqq-\Delta+\Id$,
\begin{align}
-\Delta+\Id\in\clL(\clW,\clW'),\qquad \langle(-\Delta+\Id) h,w\rangle_{\clW',\clW}\coloneqq(\nabla h,\nabla w)_{(H)^d}+(h,w)_H.\notag
\end{align}
We observe that
\begin{align}
h\in W^{1,2}(\bbT_L^d)\quad&\Longleftrightarrow\quad (-\Delta+\Id)^\frac12 h\in L^2(\bbT_L^d)=H;\notag\\
\mbox{hence}\quad h\in W^{m,2}(\bbT_L^d)\quad&\Longleftrightarrow\quad ((-\Delta+\Id)^\frac12)^{(m)} h \in L^2(\bbT_L^d);\qquad m\in\bbN_+.\notag
\end{align}
In particular, with~$V= W^{2,2}(\bbT_L^d)$ and~$A\in\clL(V,V')$ as 
\begin{equation}\label{AVV'-KS}
\langle Av,w\rangle_{V',V}\coloneqq \nu_2 (\Delta v,\Delta w)_H+2\nu_2(\nabla v,\nabla w)_H +\nu_2(v,w)_H,
\end{equation}
We observe that in this way, the operator~$A$
defines a scalar product
\[
(v,w)_V\coloneqq\langle Av,w\rangle_{V',V},
\]
the norm of which,
$
\norm{v}{V}\coloneqq ((v,v)_V)^\frac12,
$
 is equivalent to the usual norm of the Sobolev  space~$W^{2,2}(\bbT_L^d)$.
Therefore,  Assumptions~\ref{A:A0sp}--\ref{A:A0cdc} are satisfied.

Furthermore, we can write
\[
A=\nu_2(\Delta^2 -2\Delta+\Id)=\nu_2(-\Delta+\Id)^2=\nu_2\clA^2,
\]
and observe that
\begin{equation}\label{Vnorm}
(v,w)_V=\nu_2 ((-\Delta+\Id) v,(-\Delta+\Id) w)_H=\nu_2(\clA v,\clA w)_H,
\end{equation}
and also that~$A=\nu_2\clA^2=\nu_2((-\Delta+\Id)^\frac12)^4$, from which we can find
\begin{equation}\notag
\rmD(A)=W^{4,2}(\bbT_L^d),
\end{equation}
because for~$h\in\rmD(A)$ we have that~$Ah\in H$. Thus~$\rmD(A)\subseteq W^{4,2}(\bbT_L^d)$. The reverse inclusion is clear. Now, with~$(v,w)_{\rmD(A)}\coloneqq(Av,Aw)_{H}$, the equivalence of the norms~$\norm{\Bigcdot}{\rmD(A)}$ and~$\norm{\Bigcdot}{W^{4,2}(\bbT_L^d)}$ also follows straightforwardly.

\subsection{On the satisfiability of Assumption~\ref{A:realy}}\label{sS:assumpt_aux}
The average of solutions of~\eqref{sys-KS-flame} will be strictly decreasing unless the gradient vanishes. Furthermore its average will diverge to~$-\infty$ if the (norm of the) gradient remains away from zero. Thus we can expect that the solutions are not necessarily bounded in~$V$. In the literature we can find results addressing settings where the gradient remains bounded; here noticing that the evolution of~$\nabla y_\ttr$ satisfies the dynamics of the Kuramoto--Sivashinsky model for fluid flow, we can use the available estimates for this model; for results on (or related to) the boundedness of~$\nabla y_\ttr$ we refer the reader to~\cite{NicolaenkoSchTem85,Otto09,GoluskinFantuzzi19,GiacomelliOtto05} for the  case of one-dimensional spatial domains~$\Omega\in\bbR^1$, and to~\cite{FengMazzucato21} for higher-dimensional spatial domains, $\Omega\in\bbR^d$ with~$d\in\{2,3\}$. Due to such results we can (and due to the variety of necessary conditions in such results we shall) assume an analogue of Assumption~\ref{A:realy}, namely, Assumption~\ref{A:realy-KS} below.

Firstly, note that the constant function~$\indf_{\bbT_L^d}$ is an eigenfunction of the operator~$A$. By setting the orthogonal complement~$(\bbR\indf_{\bbT_L^d})^\perp$ in~$H$ and defining~$\fkG\coloneqq\rmD(A)\bigcap (\bbR\indf_{\bbT_L^d})^\perp$ we have that~$\fkG$ is a closed subspace of~$\rmD(A)$. Furthermore,~$P_{\fkG}^{\fkG^{\perp\rmD(A)}}y=P_{(\bbR\indf_{\bbT_L^d})^\perp}^{\bbR\indf_{\bbT_L^d}}y$ for all~$y\in\rmD(A)$. Let us denote again~$y_{\ttr\fkG}= P_{\fkG}^{\fkG^{\perp\rmD(A)}}y_\ttr$.
\begin{assumption}\label{A:realy-KS}
Let~$\fkG=\rmD(A)\bigcap (\bbR\indf_{\bbT_T^d})^\perp$, Then, the targeted real state~$y_\ttr$, solving~\eqref{sys-KS-flame}, is persistently uniformly bounded as follows. There are constants~$C_{y_\ttr}\ge0$ and~$\tau_{y_\ttr}>0$ such that
\begin{align}
\sup_{s\ge0}\norm{y_{\ttr\fkG}(s)}{V}\le C_{y_\ttr}\quad\mbox{and}\quad\sup_{s\ge0}\norm{y_{\ttr\fkG}}{L^2((s,s+\tau_{y_\ttr}),\rmD(A))}<C_{y_\ttr}.\notag
\end{align}
\end{assumption}
Note that~$y_{\ttr\fkG}= y_{\ttr}-y_{\ttr\rm av}$ where~$y_{\ttr\rm av}=P_{\bbR\indf_{\bbT_L^d}}^{(\bbR\indf_{\bbT_L^d})^\perp}y_\ttr=\tfrac{\int_{\bbT_L^d} y_\ttr\,\rmd x}{\int_{\bbT_L^d} 1\,\rmd x}$ is the orthogonal projection in~$H$ onto~$\bbR\indf_{\bbT_L^d}$. Then~$\norm{\nabla y_\ttr(s)}{W^{1,2}({\bbT_L^d})^d)}$ is a norm equivalent to~$\norm{y_\ttr(s)}{V}$ in~$V\bigcap(\bbR\indf_{\bbT_L^d})^\perp$, and~$\norm{\nabla y_\ttr(s)}{W^{3,2}({\bbT_L^d})^d)}$ is a norm equivalent to~$\norm{y_\ttr(s)}{\rmD(A)}$ in~$\fkG$.

\begin{remark}\label{R:satisf-pers-bdd}.
We do not know whether Assumption~\ref{A:realy-KS} is satisfied for all~$\nu=(\nu_2,\nu_1,\nu_0)$, and all regular enough external force~$f$ and initial state~$y_{\ttr0}=y_\ttr(0,\Bigcdot)$ defined in our periodic domain~$\Omega=\bbT_L^d$. However, there are related results in the literature concerning data~$(\nu,L,f,y_{\ttr0})$ given in a particular set. See, for example, for~$d=1$, in~\cite[Thm.~2.1]{NicolaenkoSchTem85} a result on the boundedness of
the norm~$\sup_{s\ge0}\norm{\tfrac{\p}{\p x}y_{\ttr}(s)}{L^2(\Omega)}$ and~\cite[Thm.~3.5]{NicolaenkoSchTem85} for a result concerning a bound analogous to the one required in Assumption~\ref{A:realy-KS}. 
\end{remark}

\subsection{Satisfiability of Assumptions~\ref{A:A1} and~\ref{A:N}}\label{sS:assumpt-chkA1N}
From
\begin{equation}\notag
\norm{A_{\rm rc}z}{H}\le(\nu_1 +2\nu_2)(\norm{\Delta z}{H}+\norm{z}{H})\le (\nu_1 +2\nu_2)2^\frac12(\norm{\Delta z}{H}^2+\norm{z}{H}^2)^\frac12\le (\nu_1 +2\nu_2)(\tfrac{2}{\nu_2})^\frac12\norm{z}{V} 
\end{equation}
we have that Assumption~\ref{A:A1} holds with~$C_{\rm rc}\coloneqq(\nu_1 +2\nu_2)(\tfrac{2}{\nu_2})^\frac12$.

Next, recalling~\eqref{KS-abstract}, we can see that~$\clN(t,y)=\clN(t,y_{\ttr\fkG})$, because $\nabla y=\nabla y_{\ttr\fkG}$, with~$\fkG$ and~$y_{\ttr\fkG}= P_{\fkG}^{\fkG^{\perp\rmD(A)}}y$ as in section~\ref{sS:assumpt_aux}. Further, with~$d=y_2-y_1$, we find that 
\begin{align}
&2\nu_0^{-1}(\clN(t,y_1)-\clN(t,y_2))= (\nabla y_1,\nabla d)_{\bbR^d}+ (\nabla y_2,\nabla d)_{\bbR^d},
\notag
\end{align}
from which we obtain
\begin{align}
&2\nu_0^{-1}\norm{\clN(t,y_1)-\clN(t,y_2)}{H}\le\norm{\nabla y_1}{L^3(\bbT_L^d)}\norm{\nabla d}{L^6(\bbT_L^d)}
+\norm{\nabla y_2}{L^3(\bbT_L^d)}\norm{\nabla d}{L^6(\bbT_L^d)}
\notag
\end{align}
and, since~$d\in\{1,2,3\}$, we can use the Sobolev embeddings~$W^{1,2}\xhookrightarrow{}L^6\xhookrightarrow{}L^3$ to obtain
\begin{align}
2\nu_0^{-1}\norm{\clN(t,y_1)-\clN(t,y_2)}{H}&\le C_1(\norm{y_1}{W^{2,2}(\bbT_L^d)}+\norm{y_2}{W^{2,2}(\bbT_L^d)})\norm{d}{W^{2,2}(\bbT_L^d)}
\notag\\
&\le C_2(\norm{y_1}{V}+\norm{y_2}{V})\norm{d}{V}.
\notag
\end{align}
We conclude that Assumption~\ref{A:N} holds with~$n=1$ and~$(\zeta_{11},\zeta_{21},\delta_{11},\delta_{21})=(1,0,1,0)$.

\subsection{Satisfiability of Assumptions~\ref{A:sens} and~\ref{A:Poincare}}
To check Assumption~\ref{A:sens} we need to specify the set of sensors and auxiliary functions.

\subsubsection{The sensors and auxiliary functions}
We consider the case of point measurements giving us an output as~\eqref{sys-Intro-o}, taking an output operator~$\clZ_S\colon \rmD(A)\to\bbR^{S_\sigma\times1}$ as
\begin{subequations}\label{clZ=delta}
\begin{align}
 &\clZ_S=\deltafun_S\coloneqq\begin{bmatrix}\deltafun_{x^{S,1}}&\deltafun_{x^{S,2}}&\dots&\deltafun_{x^{S,S_\sigma}}\end{bmatrix}^\top,
\intertext{involving delta distributions centered at  spatial points~$x^{S,j}\in\bbT_L^d$,}
&\deltafun_{x^{S,j}}(z)\coloneqq z(x^{S,j}),\qquad 1\le j\le S_\sigma.
\end{align}
\end{subequations}

\begin{remark}
Note that, actually,~$\clZ_S$ is defined for~$z$ in the larger space~$V\supset\rmD(A)$, because~$V=W^{2,2}(\bbT_L^d)\xhookrightarrow{}\clC(\bbT_L^d,\bbR)$, since~$d\in\{1,2,3\}$ (e.g., see~\cite[sect.~4.5, Thm.~4.57]{DemengelDem12}). Observe that, though in theory, it is enough to take~$\clZ_S\in\clL(\rmD(A),\bbR^{S_\sigma\times1})\sim (\rmD(A)')^{S_\sigma}$ because strong solutions~$z$ will satisfy~$z(t)\in\rmD(A)$ for {\em almost every} $t>0$, taking such a~$\clZ_S$ may allow for the error output~$\clZ_Sz(t)\in \bbR^{S_\sigma\times1}$ to be not well defined at some time instants. For example, if the norm~$\norm{z(t)}{\rmD(A)}$ blows-up as~$t$ approaches a certain time instant~$\underline t>t$, then also the norm~$\norm{\clZ_Sz(t)}{\bbR^{S_\sigma\times1}}$ could blow-up. Therefore, for applications it may be convenient to take~$\clZ_S$ in the smaller space~$\clZ_S\in\clL(V,\bbR^{S_\sigma\times1})\sim (V')^{S_\sigma}$, because we will have~$z(t)\in V$ for {\em every} $t>0$ (for the same strong solutions) and consequently we will have the error output~$\clZ_Sz(t)\in \bbR^{S_\sigma\times1}$ defined for {\em every} $t>0$ 
\end{remark}
Next, we describe the placement of the sensors~$\deltafun_{x^{S,j}}$, that is, the placement of the points~$x^{S,j}$ in~$\Omega = \bbT_L^d\sim\bigtimes\limits_{n=1}^d[0,L_n)$, $L_n>0$.
Namely, for each $S\in\bbN_+$ and~$1\le j\le S_\sigma$, we construct points $x^{S,j}$  as follows,  motivated by the number of monomials of degree at most~$3$, defined in the rectangle~$\Omega$.
With~$x=(x_1,\dots,x_d)$ standing for a generic element in~$\Omega$, let us denote the set of those monomials as
\begin{align}\label{monomials}
\fkM_d\coloneqq\{\fkm^\kappa(x)\coloneqq x_1^{\kappa_1}\dots x_d^{\kappa_d}\mid\kappa\in\{0,1,2,3\}^d\mbox{ and }\kappa_1+\dots+\kappa_d\le3\},
\end{align}
 which form a basis for the space of polynomials~$\fkP_d\coloneqq\linspan\fkM_d$ of degree at most~$3$, and their number is given by~$\#\fkM_d=\frac{(d+3)(d+2)(d+1)}{6}$ (cf.~Remark~\ref{R:balls-and-cells}). That is,~$\#\fkM_1=4$, $\#\fkM_2=10$, and~$\#\fkM_3=20$.
\begin{enumerate}[label=\roman*),leftmargin=*,align=left]
\item 
For $S=1$, we choose $\#\fkM_d$ points such that not all monomials vanish at those points. Let us denote the set of those points as
\begin{subequations}\label{sensors}
\begin{align}\label{sensorsS=1}
    &\fkX_1\coloneqq\{x^{1,j}\mid 1\le j\le \#\fkM_d\}\subset\Omega;
    \intertext{satisfying:}
    &\mbox{If }g\in\fkP_d,\mbox{ then}\quad g(\fkX_1)=\{0\}\;\Longleftrightarrow\; g=0.\label{sensorsS=1-g}
\end{align}

\item For $S>1$, we partition $\Omega$ into $S^d$ rescaled copies $\Omega^{S,k},1\le k\le S^d$, of itself as
\begin{align}
&\Omega^{S,k}=v^k+\tfrac{1}{S}\Omega,\quad\mbox{with}\\
&v^k\in\left\{(v_1,\ldots,v_d)\in\bbR^d\mid v_n=(i-1)\tfrac{L_n}{S},\;
1\le i\le S,\; 1\le n\le d\right\}.
\end{align}

 In each copy we select $\#\fkM_d$ points as
\begin{align}\label{sensorsS>1}
    &x^{S,\#\fkM_d(k-1)+s}=v^k+\tfrac{1}{S}x^{1,s}\in \Omega^{S,k},\quad 1\le s\le \#\fkM_d,
\end{align}
and take the set of delta sensors, and its linear span, as
\begin{align}\label{WS-constr}
    &W_S:=\{\deltafun_{x^{S,j}}|1\le j\le S_\sigma\};\qquad \clW_S:=\linspan W_S;\qquad S_\sigma=\sigma(S)\coloneqq \#\fkM_d S^d.
\end{align}
\end{subequations}
Note that,
\begin{align}
\Omega^{S,k}\setminus\{x^{S,\#\fkM_d(k-1)+s}|1\le s\le \#\fkM_d\} = v^k+\tfrac{1}{S}\Omega\setminus\{x^{1,s}|1\le s\le \#\fkM_d\}. 
\end{align}

\end{enumerate}

\begin{subequations}\label{aux}
Next, as set of auxiliary functions, and its linear span, we choose an arbitrary set
\begin{align}
    &\widetilde W_S\coloneqq\{\Psi^{S,j}|1\le j\le S_\sigma\}\subset\rmD(A),\qquad \widetilde\clW_S\coloneqq\linspan\widetilde W_S\\
    \intertext{of functions satisfying}
    &\Psi^{S,j}(x^{S,j})=1 \quad\mbox{and}\quad \Psi^{S,j}(x^{S,i})=0,\quad\mbox{for}\quad 1\le i\ne j\le S_\sigma.
\end{align}
\end{subequations}

\begin{remark}\label{R:choicefkX1}
In the case~$d=1$, an arbitrary set~$\fkX_1$ consisting of~$4$ distinct points in~$\Omega = \bbT_L^1\sim[0,L_1)$, $L_1>0$ will satisfy~\eqref{sensorsS=1-g}, simply because a polynomial in one variable with degree at most~$3$ can have at most~$3$ zeros. For~$d\in\{2,3\}$ the choice of~$\fkX_1$ is less clear. In applications we can check~\eqref{sensorsS=1-g} numerically, because it is equivalent to the fact that the matrix
\begin{align}
M_\fkM=\begin{bmatrix} \fkm^{\kappa^j}(x^{1,i})\end{bmatrix}\in S_\sigma\times S_\sigma=\#\fkM_d\times \#\fkM_d
\end{align}
has full rank. Here the entry~$\fkm^{\kappa^j}(x^{1,i})$ in the~$i$th row and~$j$th column is given by the evaluation of the monomial~$\fkm^{\kappa^j}\in\fkM_d$ at~$x^{1,i}\in\fkX_1$ and
the vector index~$\kappa^j$ runs over the set~$\Xi_3\coloneqq\{\kappa\in\{0,1,2,3\}^d\mid \kappa_1+\dots+\kappa_d\le3\}$.
Indeed, if we write~$g\in\fkP_d$ as~$g=\overline g_{(1,1)}\fkm^{\kappa^1}+\overline g_{(2,1)}\fkm^{\kappa^2}+\dots+\overline g_{(S_\sigma,1)}\fkm^{\kappa^{S_\sigma}}$ for a scalar vector
$\overline g\in\bbR^{S_\sigma\times1}$, then~$g(\fkX_1)=\{0\}$ if, and only if, $M_\fkM\overline g=0$.
\end{remark}

\begin{remark}\label{R:balls-and-cells}
 For~$d\in\{1,2,3\}$, we can find the value~$\#\fkM_d=\frac{(d+3)(d+2)(d+1)}{6}$ simply by writing down the monomials in~$\fkM_d$ spanning the space of polynomials in the~$d$ variabes~$x_1,\dots,x_d$ with degree at most~$3$. More generally, the number of monomials in~$m$ variabes~$x_1,\dots,x_{m}$ and with degree at most~$p$ is given by~$\#\fkM_m^{[\le p]}\coloneqq\frac{(m+p)!}{m!p!}$ where as usual~$(n+1)!\coloneqq (n+1)(n!)$ is the factorial of the positive integer~$n+1$, and~$0!\coloneqq 1$. Though this result seems to be well known, we could not find a direct statement of it in the published literature. Thus, we present the proof in the Appendix, section~\ref{Apx:balls-and-cells}. Note that, in this manuscript, it is enough for us to consider polynomials up to degree~$3$, but in different applications it may be necessary to consider polynomials up a higher degree (cf.~Rem.~\ref{R:SobolQuoc}), and it may useful to know the dimension of such subspace. 
\end{remark}

\subsubsection{Checking the assumptions}
Firstly, note that~$\sigma\colon S\mapsto S_\sigma=\#\fkM_d S^d$ is strictly increasing. Next, observe that we can decompose each $z\in\rmD(A)$ as $z=z_\Psi + z_\fkZ$ with
\begin{equation}\notag
    z_\Psi = \sum_{j=1}^{S_\sigma} z(x^{S,j})\Psi^{S,j}\in\widetilde \clW_S \qquad\mbox{and}\qquad z_\fkZ = z - z_\Psi,
\end{equation}
and we see that~$z_\fkZ(x^{S,j})=0$ for all $1\le j\le S_\sigma$; due to~\eqref{aux}. Hence,~$z_\fkZ\in\fkZ_S\coloneqq\rmD(A)\bigcap \ker\clZ_S$ and~$\rmD(A)=\widetilde \clW_S+\fkZ_S$. Assumption~\ref{A:sens} follows, due to~$\widetilde \clW_S\bigcap\fkZ_S=\{0\}$. 
Indeed, for~$h\in\widetilde \clW_S\bigcap\fkZ_S$ we find~$h=h_\Psi=\sum_{j=1}^{S_\sigma} h(x^{S,j})\Psi^{S,j}$ and~$h(x^{S,j})=0$ for all~$1\le j\le S_\sigma$, which implies~$h=h_\Psi=0$.

Next, we can show that Assumption~\ref{A:Poincare} holds by an argument as in~\cite[sect.~5]{Rod21-sicon} and~\cite[sect.~4.2]{Rod21-jnls}. The key point is that we consider rescaled copies of the rectangular domain~$\Omega$ where each copy has~$\#\fkM_d$ sensors as in the case~$S=1$. 
Note that for~$S=1$, we have that the constant in Assumption~\ref{A:Poincare} is nonzero,~$\beta_1\ge\alpha_1>0$, where~$\alpha_1$ is the first eigenvalue of~$A$, because~$\norm{h}{\rmD(A)}^2=\norm{Ah}{H}^2\ge\alpha_1\norm{A^\frac12h}{H}^2=\norm{h}{V}^2$.
Then, we can show the divergence stated in Assumption~\ref{A:Poincare}, as~$S$ increases, by following the arguments in~\cite[sect.~4.2]{Rod21-jnls}. To follow those arguments, since we already have that the norms of~$\rmD(A)$ and~$V$ are equivalent to standard Sobolev norms,  it is enough to show (for~$S=1$) that  those Sobolev norms are  equivalent to the norm defined by the sum of the seminorm involving only the largest order derivatives and a seminorm which is a norm on the space of polynomials~$\fkP_d=\linspan\fkM_d$ of degree less or equal than~$3$.
Observe that the Sobolev norm~$h\mapsto\norm{h}{W^{4,2}(\bbT_L^d)}$ is equivalent to the norm given by
\[
h\mapsto\left(\norm{\nabla_x^4h}{L^{2}(\bbT_L^d)}^{2}+\eta(h)\right)^\frac12,\quad\mbox{with}\quad\eta(h)\coloneqq{\textstyle\sum\limits_{j=1}^{\#\fkM_d}\norm{h(x^{S,j})}{\bbR}^2}.
\]
This equivalence follows from the fact that~$g\mapsto\eta(g)^\frac12$ is a norm in the finite-dimensional space~$\fkP_d$. Indeed, it is clear that~$\eta$ defines a seminorm, further if~$g\in\fkP_d$ and~$\eta(g)=0$, then~$g=0$, due to the choice of~$\fkX_1$; see~\eqref{sensorsS=1-g}. 
 Therefore, we can conclude that Assumption~\ref{A:Poincare} holds true for this choice of~$\fkX_1$.

 \begin{remark}\label{R:SobolQuoc}
  In~\cite[sect.~4.2]{Rod21-jnls}, arguments are used to prove that a quotient as
\[
\inf\limits_{\Theta\in \clX_S\setminus\{0\}}\tfrac{\norm{\nabla_x^2\Theta}{L^{2}}^2}{\norm{\nabla_x^1\Theta}{L^{2}}^2},
\]
diverges to~$+\infty$ as~$S$ increases, for an appropriate space~$\clX_S\subset W^{2,2}$. Thus, nonzero polynomials of degree up to~$1$ were avoided in~$\clX_S$ (otherwise there would exist some~$\Theta$ for which the quotient would vanish, and thus the divergence to~$+\infty$ would not hold).
In our case, we can follow analogue arguments,  to show that the quotient
\[
\inf\limits_{\Theta\in \fkZ_S\setminus\{0\}}\tfrac{\norm{\nabla_x^4\Theta}{L^{2}}^{2}}{\norm{\nabla_x^2\Theta}{L^{2}}^2},
\]
will diverge as~$S$ increases. Now, we need to avoid nonzero polynomials of degree up to~$3$ in~$\fkZ_S$. Of course, in other applications it may happen that we may need to show the divergence of quocients of analogue different seminorms (e.g., for higher order parabolic-like equations). Thus we may need to avoid higher order polynomials (cf.~Rem.~\ref{R:balls-and-cells}).
\end{remark}

\subsubsection{Remark on the choice of the sensor locations}
We have seen that it is enough to choose the reference set~$\fkX_1$ of points as~\eqref{sensorsS=1}. So in the case~$d=1$ we can take four arbitrary distinct points.

However, we can see that the choice of~$\lambda$ in Lemma~\ref{L:MlamPoinc} depends on~$S$, namely, on the constant~$C^P_S$
as in~\eqref{normOP}. In particular, it depends on an auxiliary space~$\widetilde W_S$ such that
the direct sum~$\rmD(A)=\widetilde W_S\oplus (\rmD(A)\bigcap\ker\clZ_s)$ holds true. Thus, it is not clear a priori how to give a quantitative estimate on how~$\lambda$ depends on~$S$. In particular, if we need large~$S$ and if~$\lambda(S)$ increases fast then the required values of~$\lambda$ may be too large for practical applications/simulations. In other words,  in applications to concrete problems, the location of the sensors (for each given~$S$) may play an important role on the practicability of the proposed observer.

Next, we give an example where we can give a quantitative estimate on the constant~$C^P_S$, thus an estimate for a lower bound for the required~$\lambda$. 
\begin{theorem}
Let~$d=1$, and let us set the reference set of points
\begin{equation}\label{fkXuni1d}
\fkX_1=\fkX_1^{\rm uni}\coloneqq\{\tfrac18,\tfrac38,\tfrac58,\tfrac78\},
\end{equation}
in the Torus~$\Omega=\bbT_1^1=[0,1)$. Let us construct~$W_S$ as in~\eqref{WS-constr} and choose the auxiliary set of functions~$\widetilde W_S$, in Assumption~\ref{A:sens}, as the set of the first eigenfunctions~$e_j$ of the Laplacian~$\Delta$ under periodic boundary conditions, $\widetilde W_S=E_S\coloneqq\{e_j\mid 1\le j\le 4S\}$. Then, with~$\nu_2>0$ and~$A=\nu_2(-\Delta+\Id)^2$
we have that~$C^P_S$
 as in~\eqref{normOP} is given by
\[
C^P_S=\nu_2(16S^2\pi^2+1)^2(\tfrac1{2S})^\frac12.
\]
\end{theorem}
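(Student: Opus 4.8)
The plan is to turn the supremum defining $C^P_S$ into an explicit finite matrix computation, exploiting that the reference set $\fkX_1^{\rm uni}$ produces \emph{equispaced} sensors. First I would unwind the construction \eqref{WS-constr}: substituting $x^{1,s}=\tfrac{2s-1}{8}$ into $x^{S,\#\fkM_1(k-1)+s}=\tfrac{k-1}{S}+\tfrac1S x^{1,s}$ and letting $k\in\{1,\dots,S\}$, $s\in\{1,\dots,4\}$, the points collapse to $x_n=\tfrac{2n-1}{8S}$, $n=1,\dots,4S$, i.e.\ the $4S$ equispaced midpoints of $[0,1)$. This uniformity is exactly what makes the problem exactly solvable, and it is the sole reason for the special choice of $\fkX_1^{\rm uni}$.

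Next I would apply Lemma~\ref{L:bddCPS}, which already yields $C^P_S=\|\Psi\fkE_S^{-1}\|_{\clL(\bbR^{4S},\rmD(A))}$ with $\Psi v=\sum_j v_j e_j$ and $\fkE_S=\big[e_j(x_n)\big]$. Choosing the $e_j$ to be the Laplacian eigenfunctions triggers two simplifications at once. Since $A=\nu_2(-\Delta+\Id)^2$ is diagonal in this basis, $Ae_j=\alpha_je_j$ with $\alpha_j=\nu_2\big((2\pi k_j)^2+1\big)^2$, so $\|\Psi v\|_{\rmD(A)}=\|A\Psi v\|_H=\|Dv\|$ with $D\coloneqq\mathrm{diag}(\alpha_j)$; thus $C^P_S=\|D\fkE_S^{-1}\|_2$ becomes a pure spectral-norm question. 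Simultaneously, the entries of $\Gamma_S\coloneqq\fkE_S^\top\fkE_S$ are discrete inner products of trigonometric modes sampled at equispaced nodes; writing the off-diagonal entries as geometric sums of roots of unity, which vanish because the relevant frequency sums and differences are not multiples of $4S$, I would show that $\Gamma_S$ is diagonal. In particular $\fkE_S$ is invertible (recovering the general conclusion of Lemma~\ref{L:bddCPS} by hand here), and $\fkE_S^{-1}\fkE_S^{-\top}=\Gamma_S^{-1}$ is diagonal.

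With $\Gamma_S$ diagonal, the spectral norm reduces cleanly: $\|D\fkE_S^{-1}\|_2^2=\lambda_{\max}\big(D\,\fkE_S^{-1}\fkE_S^{-\top}D\big)=\lambda_{\max}\big(D^2\Gamma_S^{-1}\big)=\max_j \alpha_j^2/(\Gamma_S)_{jj}$, so that $C^P_S=\max_j \alpha_j/(\Gamma_S)_{jj}^{1/2}$. It then remains to evaluate the sampled norms $(\Gamma_S)_{jj}=\sum_n e_j(x_n)^2$ and to identify the maximiser. Because $\alpha_j$ grows like the fourth power of the frequency while the sampled norms remain comparable across modes, the maximiser is the highest retained frequency $k=2S$, whose $A$-eigenvalue is precisely $\nu_2(16\pi^2S^2+1)^2$; dividing by the square root of its sampled norm produces the factor $(1/(2S))^{1/2}$ and hence the stated value of $C^P_S$.

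The delicate step — and the main obstacle — is the top (Nyquist) frequency $k=2S$. There the cosine mode $\cos(4\pi S x)$ vanishes identically at every node, since $\cos\!\big(\tfrac{\pi(2n-1)}{2}\big)=0$, so among the first $4S$ eigenfunctions only the sine mode survives at this frequency; its sampled vector has entries $\pm1$ and therefore a discrete column norm that differs from the generic modes. Pinning down this normalisation constant exactly, and verifying that the ratio $\alpha_j/(\Gamma_S)_{jj}^{1/2}$ at $k=2S$ indeed dominates the one at the neighbouring frequency $k=2S-1$, is precisely what fixes the constant $(1/(2S))^{1/2}$; everything else is routine discrete-Fourier bookkeeping.
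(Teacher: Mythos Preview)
Your approach is essentially the paper's: reduce $C^P_S$ to a matrix spectral norm via the oblique-projection formula of Lemma~\ref{L:bddCPS}, diagonalize using the discrete orthogonality of the trigonometric eigenfunctions at the equispaced nodes (the paper cites an external reference for this step while you sketch it via geometric sums of roots of unity), and read off the maximum at the top frequency~$k=2S$. One small slip: the identity $\|\Psi v\|_{\rmD(A)}=\|Dv\|$ with $D=\mathrm{diag}(\alpha_j)$ presumes the $e_j$ are $H$-orthonormal, which the paper's explicit eigenfunctions~\eqref{eigsLap1D} are not --- the paper handles this by pre-normalizing the eigenfunctions in $\rmD(A)$, and in your setup the fix is simply $D=\mathrm{diag}(\alpha_j\|e_j\|_H)$.
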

\begin{proof}
Let us fix an arbitrary~$S\in\bbN_+$. With~$\fkX_1=\fkX_1^{\rm uni}$ as in~\eqref{fkXuni1d}, the set of sensors constructed as in~\eqref{sensorsS>1} is given by
\[
W_S=\{x^{S,i}\mid 1\le i\le 4S\},\qquad x^{S,i}\coloneqq\tfrac{2i-1}{8S}.
\]

Let~$\widetilde W_S\coloneqq\clE_S^\rmf\coloneqq \linspan E_S^\rmf$ be the linear span of the first eigenfunctions,
$ E_S^\rmf\coloneqq\{e_j\mid 1\le j\le 4S\}$. Recall that
\begin{equation}\label{eigsLap1D}
-\Delta e_j=-\tfrac{\p^2}{\p x^2}e_j=\overline\alpha_j e_j,\quad e_{j}(x)\coloneqq \begin{cases}
\cos((j-1)\pi x),&\mbox{ if $j$ is odd},\\
\sin(j\pi  x),&\mbox{ if $j$ is even}.
\end{cases}
 \end{equation}
 
In Assumption~\ref{A:sens} we require that 
$\rmD(A)=\clE_S^\rmf\oplus\fkZ_S$, with~$\fkZ_S=\rmD(A)\bigcap\ker\clZ_S$. Hence, for any given~$h\in\rmD(A)$ we must have
$\clZ_S(h)=\clZ_S(P_{\clE_S^\rmf}^{\fkZ_S}h)$, in particular, we need to show that the output operator~$\clZ_S$ defines a bijection from~$\clE_S^\rmf$ onto~$\bbR^{S_\sigma}$, with~$S_\sigma=4S$. As in the proof of Lemma~\ref{L:bddCPS} we consider the mapping
\[
v\mapsto \overline\fkE_S v\coloneqq\clZ_S\overline\Psi v=\begin{bmatrix}
 e_j(x^{S,i})
\end{bmatrix}v,\quad\mbox{with}\quad \overline\Psi v\coloneqq{\textstyle\sum\limits_{j=1}^{4S}}v_j \overline e_j, 
\]
where we take normalized eigenfunctions in~$\rmD(A)$, that is,~$\overline  e_i\coloneqq\norm{e_i}{\rmD(A)}^{-1} e_i$.
We see that we need to show that the matrix
$\overline\fkE_S=\begin{bmatrix}
 \overline e_j(x^{S,i})
\end{bmatrix}
$
with entry~${\overline\fkE_S}_{(i,j)}=\overline e_j(x^{S,i})$ in the~$i$th row and~$j$th column is invertible. 
It is enough to show that the analogue matrix~$\fkE_S\coloneqq\begin{bmatrix}
 e_j(x^{S,i})
\end{bmatrix}$
where we do not normalize the eigenfunctions is invertible. The latter matrix~$\fkE_S$
coincides with the transpose of the matrix in~\cite[Eq.2.19]{RodSeifu22-arx} which has been proven to be invertible within~\cite[Proof of Lemma~2.9]{RodSeifu22-arx}. We can conclude that the direct sum~$\rmD(A)=\clE_S\oplus\fkZ_S$ holds true. Furthermore, following the arguments in the proof of Lemma~\ref{L:bddCPS} we arrive at the analogue of~\eqref{obli_proj} as
\[
P_{\clE_S}^{\fkZ_S}h=\overline \Psi\,\overline \fkE_S^{-1}\clZ_S h,
\]
 for the oblique
projection~$P_{\clE_S}^{\fkZ_S}$ in~$\rmD(A)$ onto~$\clE_S$ along~$\fkZ_S$.
Then, we find that
\begin{align}\notag
\norm{P_{\clE_S}^{\fkZ_S}h}{\rmD(A)}^2=\norm{\overline \Psi\,\overline \fkE_S^{-1}\clZ_S h}{\rmD(A)}^2
=\norm{\overline \fkE_S^{-1}\clZ_S h}{\bbR^{4S}}^2=(\clZ_S h)^\top(\overline \fkE_S^{-1})^\top\overline \fkE_S^{-1}\clZ_S h
 \end{align}
which implies that the constant in~\eqref{normOP} satisfies
\begin{align}
(C^P_S)^2
&=\sup_{h\in\rmD(A)\setminus\ker\clZ_S}\tfrac{  \norm{P_{\widetilde \clW_S}^{\fkZ_S}h}{\rmD(A)}^2 }{ \norm{\clZ_Sh}{\bbR^{S_\sigma\times1}}^2  }
=\sup_{h\in\rmD(A)\setminus\ker\clZ_S}\tfrac{  \norm{(\clZ_S h)^\top(\overline \fkE_S^{-1})^\top\overline \fkE_S^{-1}\clZ_S h}{\bbR^{S_\sigma\times1}}^2 }{ \norm{\clZ_Sh}{\bbR^{S_\sigma\times1}}^2  }\notag\\
&=\sup_{v\in\bbR^{S_\sigma\times1}\setminus\{0\}}\tfrac{  \norm{v^\top(\overline \fkE_S^{-1})^\top\overline \fkE_S^{-1}v}{\bbR^{S_\sigma\times1}} }{ \norm{v}{\bbR^{S_\sigma\times1}}  }
= {\rm eig}(\overline \Pi_S,4S),\notag
 \end{align}
where~${\rm eig(\overline \Pi_S,4S)}$ is the largest eigenvalue of~$\overline \Pi_S\coloneqq(\overline \fkE_S^{-1})^\top\overline \fkE_S^{-1}$.

Following the arguments in \cite[proof of Lem.~2.10]{RodSeifu22-arx} we know that~$\Pi_S^{-1}\coloneqq(\fkE_S) \fkE_S^{\top}$ is a diagonal matrix with entries~$\Pi_{S,(1,1)}^{-1}=4S$ and~$\Pi_{S,(i,i)}^{-1}=\frac{4S}2$ for~$2\le i\le 4S$.
It follows, from the orthogonality of the eigenfunctions, that
\[
\overline \Pi_S=((D_S \fkE_S)^{-1})^\top(D_S\fkE_S)^{-1}=(D_S^{-1})^\top \Pi_{S} D_S^{-1}=D_S^{-1} \Pi_{S} D_S^{-1}
\]
where~$D_S$ is the diagonal matrix with entries~$D_{S,(i,i)}={\rm eig}(A^{-1},i)=\norm{e_i}{\rmD(A)}^{-1}$ as the eigenvalues of~$A^{-1}$.
Hence~$\overline \Pi_S$ is again diagonal with
entries
\begin{align}
&\overline\Pi_{S,(1,1)}={\rm eig}(A,1)^2{(4S)}^{-1}=\nu_2^2\tfrac1{4S};\notag\\
&\overline\Pi_{S,(i,i)}={\rm eig}(A,i)^2{(2S)}^{-1}=\nu_2^2(4(\tfrac{i}2)^2\pi^2+1)^4\tfrac1{2S}&&\quad\mbox{for even~$i$},\quad 2\le i\le 4S;\notag\\
&\overline\Pi_{S,(i,i)}={\rm eig}(A,i)^2{(2S)}^{-1}=\nu_2^2(4(\tfrac{i-1}2)^2\pi^2+1)^4\tfrac1{2S},&&\quad\mbox{for odd~$i$},\quad 3\le i\le 4S.\notag
\end{align}
Hence we can conclude that the largest eigenvalue satisfies
\[
{\rm eig}(\overline \Pi_S,4S)=\nu_2^2(4(\tfrac{4S}2)^2\pi^2+1)^4\tfrac1{2S}=\nu_2^2(16S^2\pi^2+1)^4\tfrac1{2S},
\]
which gives us~$
(C^P_S)^2=\nu_2^2(16S^2\pi^2+1)^4\tfrac1{2S}$.
\end{proof}

\subsection{Satisfiability of Assumption~\ref{A:Inj}}\label{sS:chkAInj}
To check Assumption~\ref{A:Inj} we need to specify the reference output injection operator~$\overline\fkI_{S}$. 
Recalling our proposed output injection operator in~\eqref{sys-haty-o-Inj},
by a comparison with the abstract formulation in~\eqref{sys-z}, we look for~$\overline\fkI_{S}$ such that~$\fkI_{S}^{[\lambda,\Lambda]}z=-\lambda A^{-1}\overline\fkI_{S}\clZ_S z$, 
\begin{equation}\label{KSlam}
\fkI_{S}^{[\lambda,\Lambda]}\clZ_S z=-\lambda A^{-1}\clZ_S^*\Lambda\clZ_S z=-\lambda A^{-1}\overline\fkI_{S}\clZ_S z. 
\end{equation}
By taking~$\overline\fkI_{S}\coloneqq\clZ_S^*\Lambda$, it follows
 \begin{align}\notag
(A^{-1}\overline\fkI_{S}\clZ_S z,Az)_H &=(\overline\fkI_{S}\clZ_S z,z)_{\rmD(A^{-1}),\rmD(A)}=(\Lambda\clZ_S z,\clZ_S z)_{\bbR^{S_\sigma}}.
\end{align}
Observe that from~${\rm eig}(\Lambda+\Lambda^\top,1)=1$ as in~\eqref{sys-haty-o-Inj} and
 \begin{align}\label{posAAtop}
(\Lambda\clZ_S z,\clZ_S z)_{\bbR^{S_\sigma}}=(\clZ_Sz)^\top\Lambda\clZ_Sz=\tfrac12(\clZ_Sz)^\top(\Lambda+\Lambda^\top)\clZ_Sz,
\end{align}
it follows that~$(\Lambda\clZ_S z,\clZ_S z)_{\bbR^{S_\sigma}}\ge\tfrac12(\clZ_S z,\clZ_S z)_{\bbR^{S_\sigma}}$. Thus, Assumption~\ref{A:Inj} holds true.

 \section{Numerical Simulations for the K--S flame propagation model}\label{S:simul-flame}
 
We consider the one-dimensional, $d=1$, Kuramoto--Sivashinsky model for flame propagation. We show the results of simulations illustrating the detectability result
stated in Main Result in the Introduction; see main Theorem~\ref{T:main}. We take a vanishing external forcing~$f=0$, and consider periodic boundary conditions with period~$1$. Hence, the spatial domain is the Torus~$\Omega=\bbT_1^1=[0,1)$.
We solve both the nominal system model~\eqref{sys-KS-flame}, with output as in~\eqref{sys-Intro-o}
\begin{subequations}\label{sys-y-num}
  \begin{align}
 &\tfrac{\p}{\p t}y_{\ttr}+\nu_2 \tfrac{\p^4}{\p x^4} y_{\ttr}+\nu_1 \tfrac{\p^2}{\p x^2}  y_{\ttr}+\nu_0\tfrac{1}{2}\norm{\tfrac{\p}{\p x} y_{\ttr}}{\bbR}^2=0,\\
 &\clZ_{S} y_{\ttr}\coloneqq\begin{bmatrix}
           y_{\ttr} (x^{S,1}) &  y_{\ttr}(x^{S,2}) & \dots & y_{\ttr}(x^{S,S_\sigma})
         \end{bmatrix}^\top\in\bbR^{S_\sigma\times 1},
\end{align}
\end{subequations}
and the Luenberger observer~\eqref{sys-KS-flame-obs}, with output injection operator~\eqref{sys-haty-o-Inj},
\begin{subequations}\label{sys-haty-num}
  \begin{align}
 &\tfrac{\p}{\p t}y_\tte+\nu_2 \tfrac{\p^4}{\p x^4} y_\tte+\nu_1 \tfrac{\p^2}{\p x^2} y_\tte+\nu_0\tfrac{1}{2}\norm{\tfrac{\p}{\p x} y_\tte}{\bbR}^2=-\lambda A^{-1}\clZ_S^*\Lambda(\clZ_S y_\tte-\clZ_S y_{\ttr})\\
&\mbox{with}\quad A=\nu_2(-\tfrac{\p^2}{\p x^2}+\Id)^2,
\end{align}
\end{subequations}
where~$\Lambda\in\bbR^{S_\sigma\times S\sigma}$ is a suitable matrix such that (cf.~\eqref{sys-haty-o-Inj})
$\Lambda+\Lambda^\top$ is positive definite with~${\rm eig}(\Lambda+\Lambda^\top,1)=1$.

\subsection{Spatial Discretization}
We use similar numerical setting as in \cite{RodSeifu22-arx}, where spectral elements were used to compute the solutions of Galerkin approximations based on ``the'' first ~$N$ (periodic) eigenfunctions of the Laplacian operator. Note that the eigenfunctions of the Laplacian coincide with those of~$A=\nu_2(-\tfrac{\p^2}{\p x^2}+\Id)^2$. 
We look for approximations~$y_\ttr^N(t,\Bigcdot)\in\clE^\rmf_N$  and~$y_\tte^N(t,\Bigcdot)\in\clE^\rmf_N$ of the states~$y_\ttr(t,\Bigcdot)$  and~$y_\tte(t,\Bigcdot)$ of the systems~\eqref{sys-y-num} and~\eqref{sys-haty-num}, respectively, in the linear span~$\clE^\rmf_N\coloneqq\linspan\{e_n\mid 1\le n\le N\}$, of the first~$N\ge S_\sigma$ eigenfunctions of the Laplacian as in~\eqref{eigsLap1D}.
Namely, for~$t\ge0$ and~$x\in[0,1)=\bbT_1^1$, as
\begin{equation}\label{yN-Galsum}
y_\ttr^N(t,x)=\sum\limits_{n=1}^N y_{\ttr;n}^N(t)e_n(x)\qquad\mbox{and}\quad y_{\tte}^N(t,x)=\sum\limits_{n=1}^N y_{\tte;n}^N(t)e_n(x).
\end{equation}
Denoting  the orthogonal projection in~$H=L^2(\bbT_1^1)$ onto~$\clE^\rmf_N$ by~$P_{\clE^\rmf_N}\in\clL(H,\clE^\rmf_N)$, we compute~$y_\ttr^N$ and~$y_\tte^N$ by solving Galerkin approximations as follows (cf.\cite[sect.~4.1]{RodSeifu22-arx}, \cite[sect.~3.4]{Rod21-jnls}, \cite[sect.~4.3]{Rod20-eect}, \cite[Ch.~3, sect.~3.2]{Temam01}),
\begin{align}
 &\dot{y_\ttr}^N+ \nu_2\tfrac{\p^4}{\p x^4} y_\ttr^N+ \nu_1\tfrac{\p^2}{\p x^2} y_\ttr^N+\nu_0 P_{\clE^\rmf_N}\clN_1(y_\ttr^N)=0,
 \notag\\
 &\dot y_\tte^N+ \nu_2\tfrac{\p^4}{\p x^4} y_\tte^N+ \nu_1\tfrac{\p^2}{\p x^2} y_\tte^N+\nu_0 P_{\clE^\rmf_N}\clN_1(y_\tte^N)=-\lambda P_{\clE^\rmf_N} A^{-1}\clZ_S^*\Lambda(\clZ_S y_\tte^N-\clZ_S y_{\ttr}^N),\notag
 \intertext{with the nonlinearity}
 &\clN_1(h^N)\coloneqq\tfrac12\norm{\p_x h^N}{\bbR}^2,
 \quad\mbox{and initial states}\quad y_\ttr^N(0,\Bigcdot)=P_{\clE^\rmf_N}y_{\ttr0},\quad y_\tte^N(0,\Bigcdot)=P_{\clE^\rmf_N}y_{\tte0}.
 \notag
\end{align}
Note that both~$\tfrac{\p^4}{\p x^4}$ and~$\tfrac{\p^2}{\p x^2}$ map~$\clE^\rmf_N$ into itself.
Essentially, we solve a system of~$N$ ordinary differential equations (one equation for each spectral coordinate index~$n$) as 
\begin{subequations}\label{galerkin-coord}
\begin{align}
& \dot y_{\tte;n}^N=- (\nu_2\overline\alpha_n^2-\nu_1\overline\alpha_n) y_{\tte;n}^N-\nu_0\left(P_{\clE^\rmf_N}\clN_1(y_\tte^N)\right)_n\notag\\
 &\hspace{2.5em}-\lambda\left(P_{\clE^\rmf_N}A^{-1}\clZ_S^*\Lambda \clZ_S(y_\tte^N-y_\ttr^N)\right)_n,\\
 &y_{\tte;n}^N(0,\Bigcdot)=y_{\tte0;n}.
\end{align}
\end{subequations}
for the observer state estimate, and analogously for the coordinates~$y_{\ttr;n}^N$ of the nominal state with~$\lambda=0$.
In conclusion, the computed approximated solutions for~$y_\ttr$ and for~$y_\tte$  are linear combinations  of the first eigenfunctions in the spatial interval~$[0,1)$ with coordinates given as in~\eqref{galerkin-coord}.

We compute the orthogonal projections~$P_{\clE^\rmf_N}h$  above by firstly evaluating $h$ in the nodes of a (regular) mesh/partition, for a space-step~$0<x^{\rm step}=\tfrac1{N_{\rm fem}}<1$,
\begin{equation}\label{disc-spInt}
[0,1)_{\rm disc}= \{(n-1)\tfrac1{N_{\rm fem}}\mid 1\le n\le N_{\rm fem}\},\qquad N_{\rm fem}\in\bbN_+,\quad N_{\rm fem}\ge2.
\end{equation}
 of the spatial interval~$[0,1)$ and use the associated finite-element mass matrix (corresponding to the considered periodic boundary conditions) as an auxiliary tool, to compute the coordinates of~$P_{\clE^\rmf_N}h$ following~\cite[sect.~8.1]{RodSturm20}. 
The finite-element basis vectors were taken as the classical hat-functions (piecewise-linear elements).

While solving the equations, we take the output~$w^N=\clZ_S y_\ttr^N$ containing the measurements~$y_\ttr^N(x^{S,i})$, which is then injected into the dynamics of the observer as
\begin{align}\label{GalInj}
-\lambda  P_{\clE^\rmf_N}A^{-1}\clZ_S^*\Lambda (\clZ_Sy_\tte^N-\clZ_Sy_\ttr^N).
\end{align}
For an arbitrary eigenfunction~$e_m\in\clE^\rmf_N$ we find
\begin{align}\notag
&(P_{\clE^\rmf_N}A^{-1}\clZ_S^*\Lambda (\clZ_Sy_\tte^N-\clZ_Sy_\ttr^N),e_m)_H=(\clZ_S^*\Lambda (\clZ_Sy_\tte^N-\clZ_Sy_\ttr^N),A^{-1}e_m)_H\notag\\
&\hspace{3em}=(\Lambda (\clZ_Sy_\tte^N-\clZ_Sy_\ttr^N),\clZ_S A^{-1}e_m)_{\bbR^{S_\sigma}}=(\Lambda(\clZ_Sy_\tte^N-\clZ_Sy_\ttr^N), \nu_2^{-1}\alpha_m^{-1}\clZ_Se_m)_{\bbR^{S_\sigma}}\notag\\
&\hspace{3em}=\nu_2^{-1}\alpha_m^{-1}(\clZ_Se_m)^\top\Lambda(\clZ_Sy_\tte^N-\clZ_Sy_\ttr^N),\label{discInj1}
\end{align}
where~$\alpha_m\coloneqq (\overline\alpha_m +1)^2$ is the~$m$th eigenvalue of~$A_0\coloneqq(-\tfrac{\p^2}{\p x^2}+\Id)^2$. Therefore, if we set the matrix
\begin{equation}\label{matrix_Ex}
\bfE_\fkX^\circ= \begin{bmatrix}\bfE^\circ_{\fkX,(r,c)}\end{bmatrix}\in\bbR^{N\times S_\sigma}\quad\mbox{with}\quad
\bfE^\circ_{\fkX,(r,c)} \coloneqq \alpha_c^{-1}e_c(x^{S,r}),
\end{equation}
that is, with entry~$\bfE^\circ_{\fkX,(r,c)}$ in the~$r$th row and~$c$th column, we observe that 
\begin{align}
\clZ_Sy^N=\bfE_\fkX^\circ y^N\in\bbR^{S_\sigma\times1},\quad\mbox{for every}\quad y^N\in\clE^\rmf_N,\label{disc-output}
\end{align}
giving us the spectral discretization of the output/measurement operator. 

We observe that, with~$I\in\bbR^{N\times 1}$,
\begin{subequations}\label{matrix_bfI-spe}
\begin{align}
&-\lambda P_{\clE^\rmf_N}A^{-1}\clZ_S^*\Lambda (\clZ_Sy_\tte^N-\clZ_S y_\ttr^N)= {\textstyle\sum\limits_{j=1}^{N}} I_{(j,1)}e_j\notag\\
\Longleftrightarrow\quad& I= -\lambda\nu_2^{-1} \clD_\alpha^{-1}(\bfE_\fkX^\circ)^\top\Lambda (\clZ_Sy_\tte^N-\clZ_S y_\ttr^N).
\intertext{where~$\clD_\alpha\in\bbR^{N\times N}$ is diagonal with entries}
&\hspace{-2em}\clD_{\alpha,(i,i)}=\alpha_i=(\overline\alpha_i+1)^2,\quad 1\le i\le N.
 \end{align}
 \end{subequations}
  Indeed,  with~$\omega\coloneqq\clZ_Sy_\tte^N-\clZ_S y_\ttr^N$, 
\begin{align}
&({\textstyle\sum\limits_{j=1}^{N}}(-\lambda \clD_\alpha^{-1}(\bfE_\fkX^\circ)^\perp\Lambda \omega)_{(j,1)}e_j, e_m)_H=(-\lambda \clD_\alpha^{-1}(\bfE_\fkX^\circ)^\perp\Lambda \omega)_{(m,1)}\notag\\
&\hspace{3em}=-\lambda \alpha_m^{-1}\begin{bmatrix} (\bfE_\fkX^\circ)^\perp_{(m,1)}& (\bfE_\fkX^\circ)^\perp_{(m,2)}&\dots& (\bfE_\fkX^\circ)^\perp_{(m,S_\sigma)}\end{bmatrix}\Lambda \omega\notag\\
&\hspace{3em}=-\lambda \alpha_m^{-1}\begin{bmatrix} e_m(x^{S,1})& e_m(x^{S,2})&\dots& e_m(x^{S,S_\sigma})\end{bmatrix}\Lambda \omega\notag\\
&\hspace{3em}=-\lambda \alpha_m^{-1}(\clZ_Se_m)^\top\Lambda(\clZ_Sy_\tte^N-\clZ_Sy_\ttr^N)\notag
\end{align}
which agrees with~\eqref{discInj1}. Thus, \eqref{matrix_bfI-spe} gives us a simple expression to compute the (Galerkin coordinates of the) output injection term.

\subsection{Temporal discretization}
 The temporal discretization is based on an implicit-explict (IMEX) method combining the  Crank--Nicolson scheme and the Adams--Bashford extrapolation. The (implicit) Crank--Nicolson scheme is used for the linear component~$-(\nu_2\overline\alpha_n^2-\nu_1\overline\alpha_n)y^N_n$, $y^N_n\in\{y_{\ttr;n}^N,y_{\tte;n}^N\}$, and an (explicit) Adams-Bashford extrapolation is used for the nonlinear and injection component~$-\nu_0\left(P_{\clE_N}\clN_1(y^N)\right)_n-\lambda\left(P_{\clE_N}\fkI y^N\right)_n$, $y^N\in\{y_{\ttr}^N,y_{\tte}^N\}$.
The temporal step~$t^{\rm step}>0$ was taken uniform,
\begin{equation}\label{disc-tInt}
[0,+\infty)_{\rm disc}= \{nt^{\rm step}\mid n\in\bbN\}.
\end{equation}

\subsection{Performance of the observer}
 We choose the parameters in~\eqref{sys-y-num} and~\eqref{sys-haty-num} as
 \begin{subequations}\label{param-flame-nominal}
\begin{equation}
\nu_2=10^{-6},\quad \nu_1=10^{-2},\quad \nu_0=10^{-2},
\end{equation}
and the initial states for targeted~$y$ and estimate~$\widetilde y$ trajectories as
\begin{equation}
y_{\ttr0}= 1+\sin(4\pi x),
\qquad y_{\tte0}=\cos(2\pi x)(1+\sin(2\pi x)).
\end{equation}
 \end{subequations}
 \begin{subequations}\label{param-flame-inj}
We take~$4$ reference sensors (corresponding to the case~$S=1$) located at the points
\begin{equation}
\fkX_1=\{x^{1,1},x^{1,2},x^{1,3},x^{1,4}\}=\{0,\tfrac14,\tfrac12,\tfrac34\},
\end{equation}
thus uniformly distributed in the Torus~$\bbT_1^1=[0,1)$.
We shall test the performance of the observer for several values of the gain parameter~$\lambda$ in the output injection operator, where the parameter~$S$ in the same operator is taken as
\begin{equation}
 S=9,\qquad\mbox{hence}\quad S_\sigma=36
 \end{equation}
sensors; see~\eqref{sensors}.
By construction as in~\eqref{sensorsS>1} (for~$S>1$) we have the sensor located at the points in the set
\begin{align}
    &\hspace{-.3em}\fkX_S\coloneqq\{x^{S,j}\mid 1\le j\le S_\sigma\},\qquad\mbox{with }\\
   &\hspace{-.3em}x^{S,4(k-1)+s}=(k-1)\tfrac{1}{S}+\tfrac{1}{S}x^{1,s}\in (0,\tfrac{1}{S}(k-1))\subset\bbT^1_1,\quad1\le k\le S,\; 1\le s\le 4.
\end{align}
Hence, our sensors are the delta distributions in the set
\begin{align}
W_S:=\{\deltafun_{x^{S,j}}|1\le j\le S_\sigma\}.
\end{align}
\end{subequations}
 \begin{subequations}\label{param-flame-Lam}
We choose the matrix~$\Lambda\in\bbR^{4S\times 4S}$ in the injection operator~\eqref{GalInj} as follows, motivated by numerical experiments,
\begin{align}
&\Lambda=\tfrac1{{\rm eig}(\overline\Lambda+\overline\Lambda^\top,1)}\overline\Lambda\quad
\mbox{with}\quad\overline\Lambda\coloneqq (\underline\bfE_\fkX^\circ)^{-\top}\underline\clD_\alpha (\underline\bfE_\fkX^\circ)^{\top},\\ \mbox{where}\quad&\underline\bfE_\fkX^\circ=\bfE_\fkX^\circ(1:S_\sigma,1:S_\sigma)
\quad\mbox{and}\quad\underline\clD_\alpha=\clD_\alpha(1:S_\sigma,1:S_\sigma).
\end{align}
That is, $\underline\bfE_\fkX^\circ\in\bbR^{S_\sigma\times S_\sigma}$  and~$\underline\clD_\alpha\in\bbR^{S_\sigma\times S_\sigma}$ are, respectively, the blocks of the first~$S_\sigma\le N$ rows and columns of~$\bfE_\fkX^\circ\in\bbR^{S_\sigma\times N}$ and~$\clD_\alpha\in\bbR^{N\times N}$.
\end{subequations}
Note that, the matrix~$\bfE_\fkX^\circ$ in~\eqref{matrix_Ex} does not depend on the discretization, hence~$\Lambda$ is independent of the discretization. Numerically, we have observed that the symmetric matrix~$\Lambda+\Lambda^\top$ is positive definite with eigenvalues ranging from~${\rm eig}(\Lambda+\Lambda^\top,1)=1$ to~${\rm eig}(\Lambda+\Lambda^\top,S_\sigma)\approx 2.49\times10^8$.

 \begin{subequations}\label{param-flame-discret}
 We solve  the $N$-dimensional Galerkin approximation, with
 \begin{align}
 N=200.
 \end{align}
 The temporal and spatial time steps, in~\eqref{disc-spInt} and~\eqref{disc-tInt} were taken as
  \begin{align}
 t^{\rm step}=10^{-3},\quad  x^{\rm step}=10^{-4}.
 \end{align}
 \end{subequations}

\begin{remark} Concerning the definition of~$\fkX_S$, note that~$4(k_1-1)+s_1=4(k_2-1)+s_2$ if, and only if, $4(k_1-k_2)=s_2-s_1$. Hence, 
if~$-3\le s_2- s_1\le 3$ we have that~$\tfrac{s_2- s_1}4$ is an integer number if, and only if,~$s_2=s_1$. Therefore, we can conclude that
\begin{align}
\Bigl(4(k_1-1)+s_1=4(k_2-1)+s_2\mbox{ and }  1\le s_1,s_2\le 4\Bigr)
\quad\Longleftrightarrow\quad &(k_1,s_1)=(k_2,s_2).\notag
\end{align}
Thus,~$(k,s)\mapsto 4(k-1)+s$ is a bijection from~$\{1,\dots,S\}\times\{1,\dots,4\}$ onto~$\{1,\dots,4S
\}$.
\end{remark}

\begin{remark}
The parameters above, in~\eqref{param-flame-nominal}, were taken in~\cite{RodSeifu22-arx} in the context of stabilizability (i.e., state stabilization by means of feedback controls).  Here we take the same parameters in the context of detectability (i.e., state estimation by means of an output injection operator). We stress that these are conceptually different contexts: for example, for stabilizability we are given a control operator~$B_M\coloneqq\bbR^{M_\varsigma}\to\clU_M$ onto the linear span of a given set of~$M_\varsigma$ actuators and we look for an operator~$K\colon V\to\bbR^{M_\varsigma}$ mapping the state~$y$ into the tuning parameters of  the control (e.g., ~$K=-B_M^*\Pi$ as in the classical linear quadratic regulator problem, where~$\Pi$ satisfies a Riccati equation), while for detectability we are given an output operator~$\clZ_S$ and look for an injection operator~$I\colon\bbR^{S_\sigma}\to H$ mapping the output~$\clZ_Sy$ onto a suitable superspace $H\supseteq V$ of the state space~$V$ (e.g.,~$I=-\lambda A^{-1}\clZ_S^*\Lambda$ as in~\eqref{KSlam}).
\end{remark}

\subsubsection{Lack of asymptotic stability of the free dynamics.}
In Fig.~\ref{Fig:free-flame} we see that without the injection operator (i.e., with~$\lambda=0$) the estimate~$y_\tte=y^N_\tte$ given by the observer (likely) does not converge to the real targeted state~$y_\ttr=y_\ttr^N$. This shows, in particular, that the error free dynamics is not asymptotically stable and thus a nontrivial injection operator is necessary for the observer to give us a state estimate converging exponentially to~$y$.

In Fig.~\ref{Fig:free-av-flame} we plot the average-free components of the states, that is, recalling~\eqref{yN-Galsum},
\[
y^N_{\ttr\rm avf}(t,x)\coloneqq  y^N_{\ttr}(t,x)-y^N_{\ttr;1}(t)e_1(x)\quad\mbox{and}\quad y^N_{\tte\rm avf}(t,x)\coloneqq  y^N_{\tte}(t,x)-y^N_{\tte;1}(t)e_1(x),
\]
with~$e_1(x)=1$.  We see that also this component of the estimate  is not converging to the corresponding component of the targeted reference state. Note that for~$j\ge2$ the eigenfunctions are zero-averaged, $\int_\Omega e_j(x)\,\rmd x=(e_j,e_1)_{L^2(\Omega)}=0$, thus~$y^N_{\ttr\rm avf}(t,x)$ and~$y^N_{\tte\rm avf}(t,x)$ are zero-averaged.
\begin{figure}[ht]
\centering
\subfigure[targeted state.]
{\includegraphics[width=0.45\textwidth,height=0.35\textwidth]{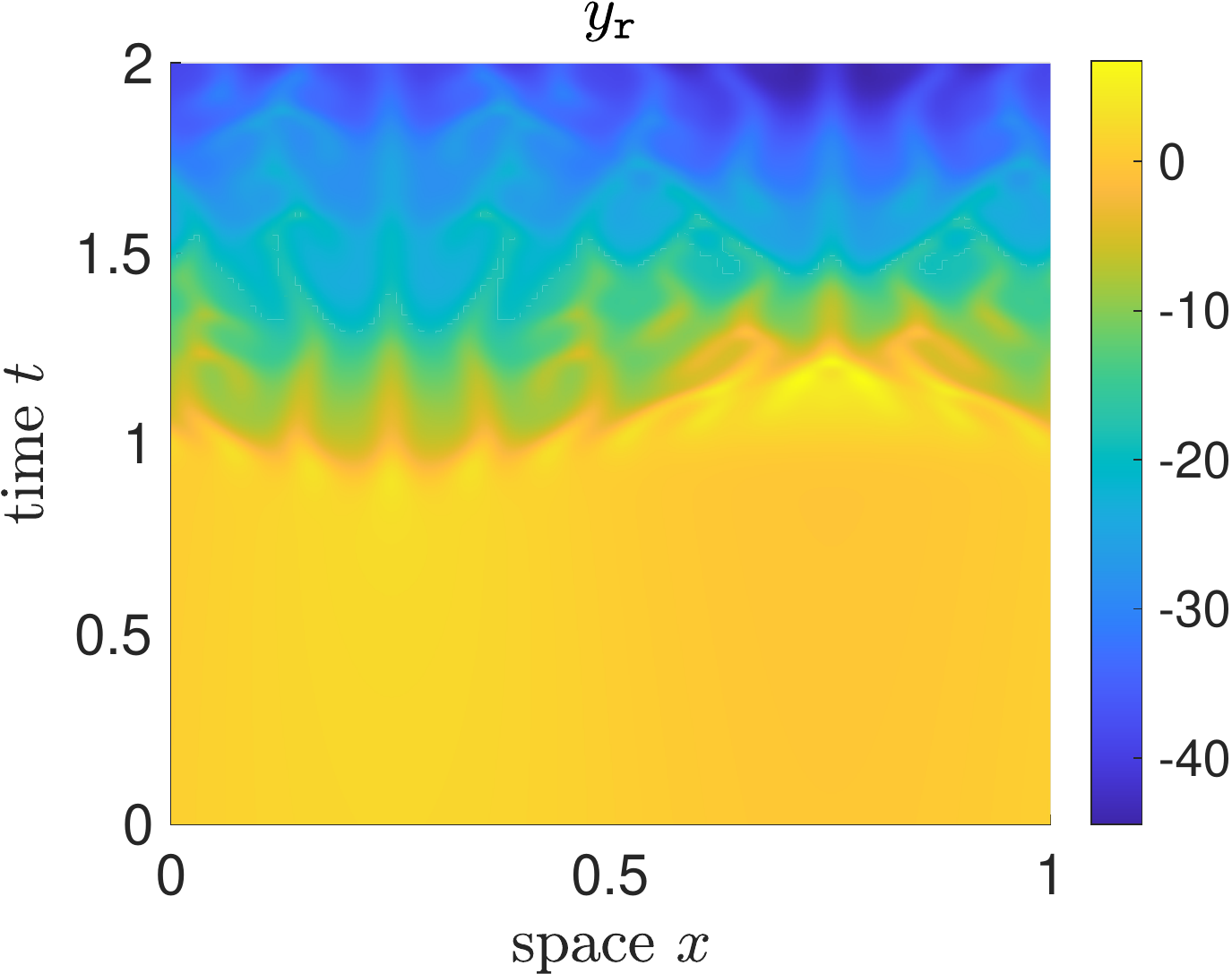}}
\qquad
\subfigure[free estimate.]
{\includegraphics[width=0.45\textwidth,height=0.35\textwidth]{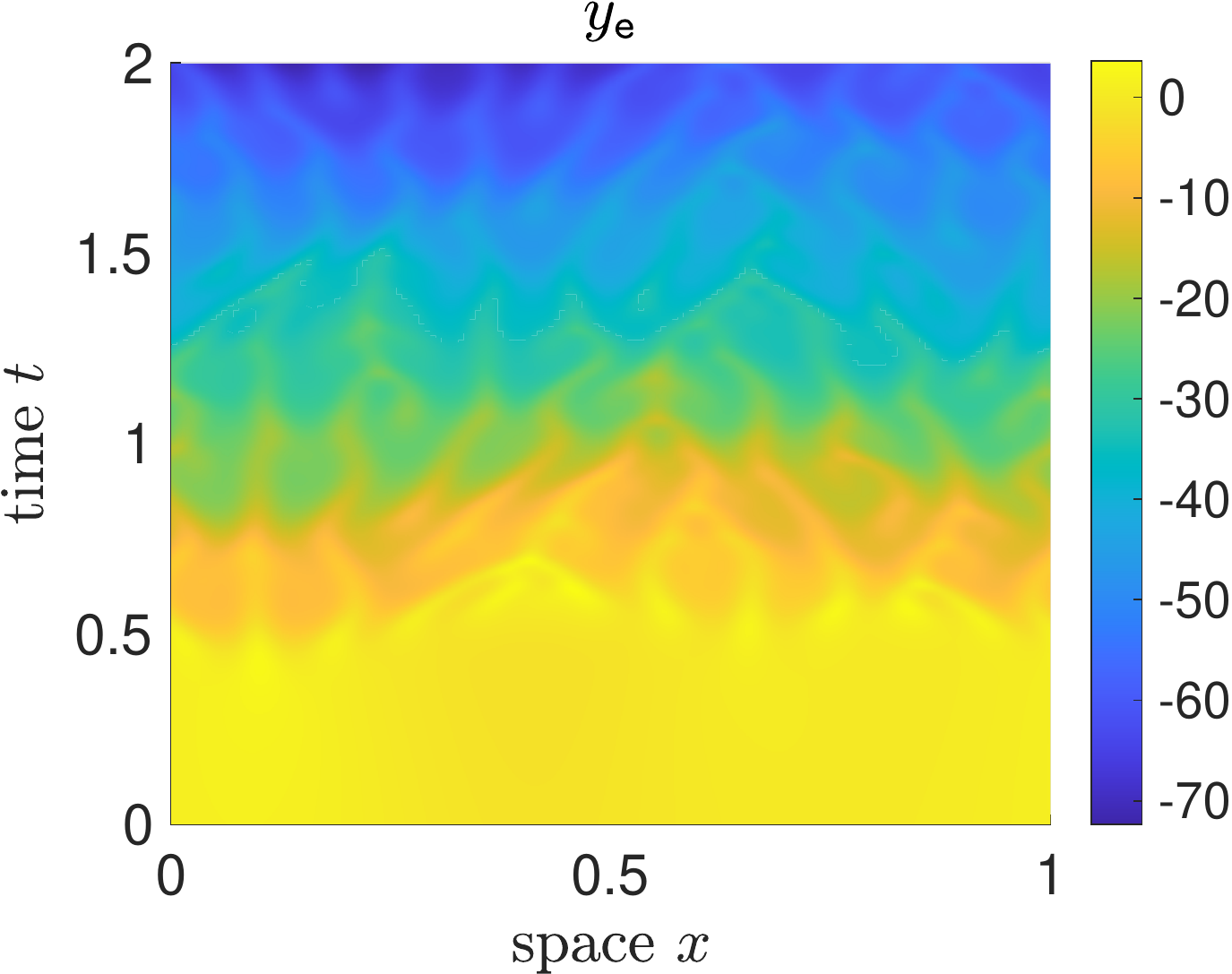}}
\caption{Case~$\lambda=0$ (free dynamics). Model~\eqref{sys-y-num}--\eqref{sys-haty-num}. \newline\newline}\label{Fig:free-flame}
\subfigure[zero-average component of targeted  state.]
{\includegraphics[width=0.45\textwidth,height=0.35\textwidth]{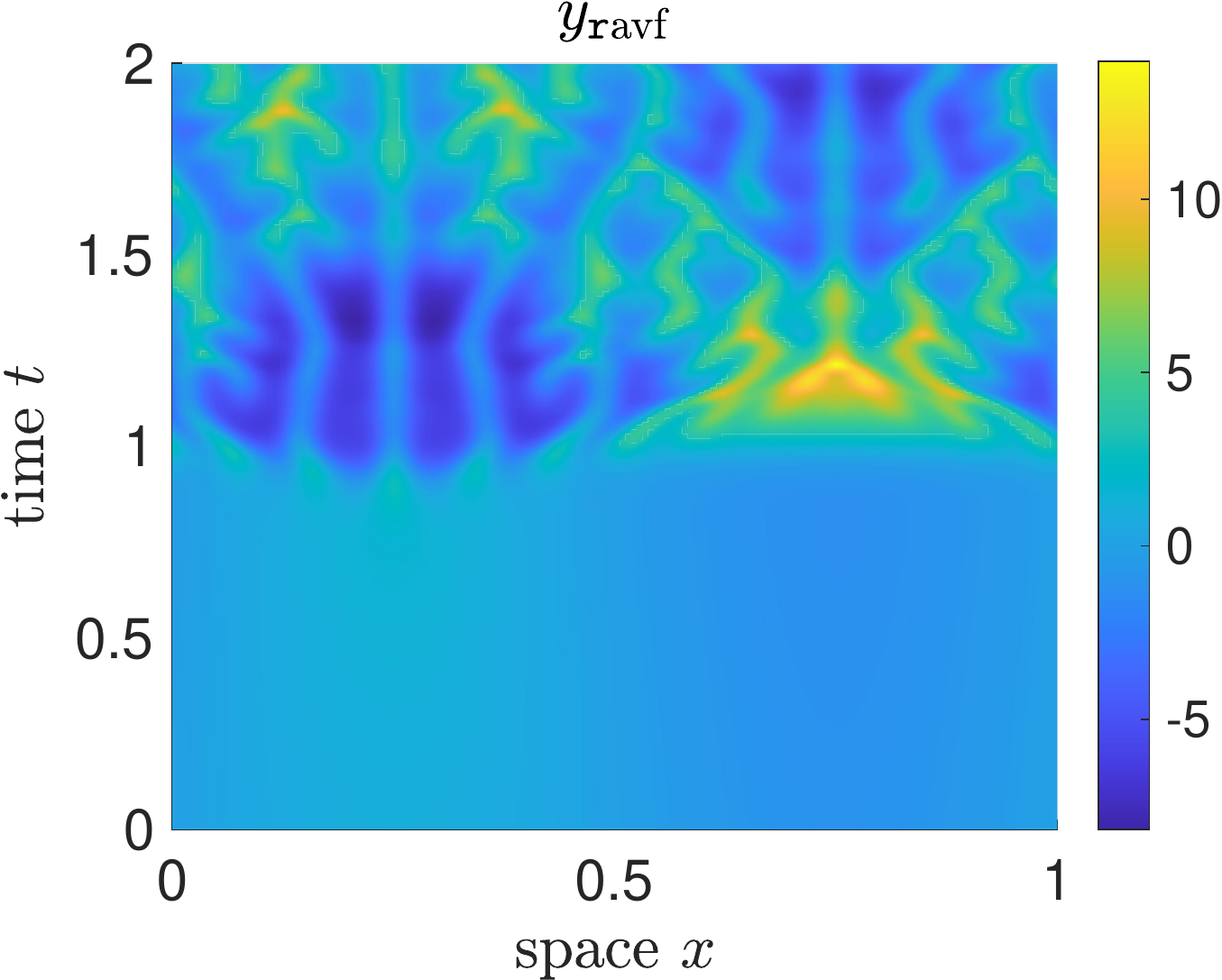}}
\qquad
\subfigure[zero-average component of free estimate.]
{\includegraphics[width=0.45\textwidth,height=0.35\textwidth]{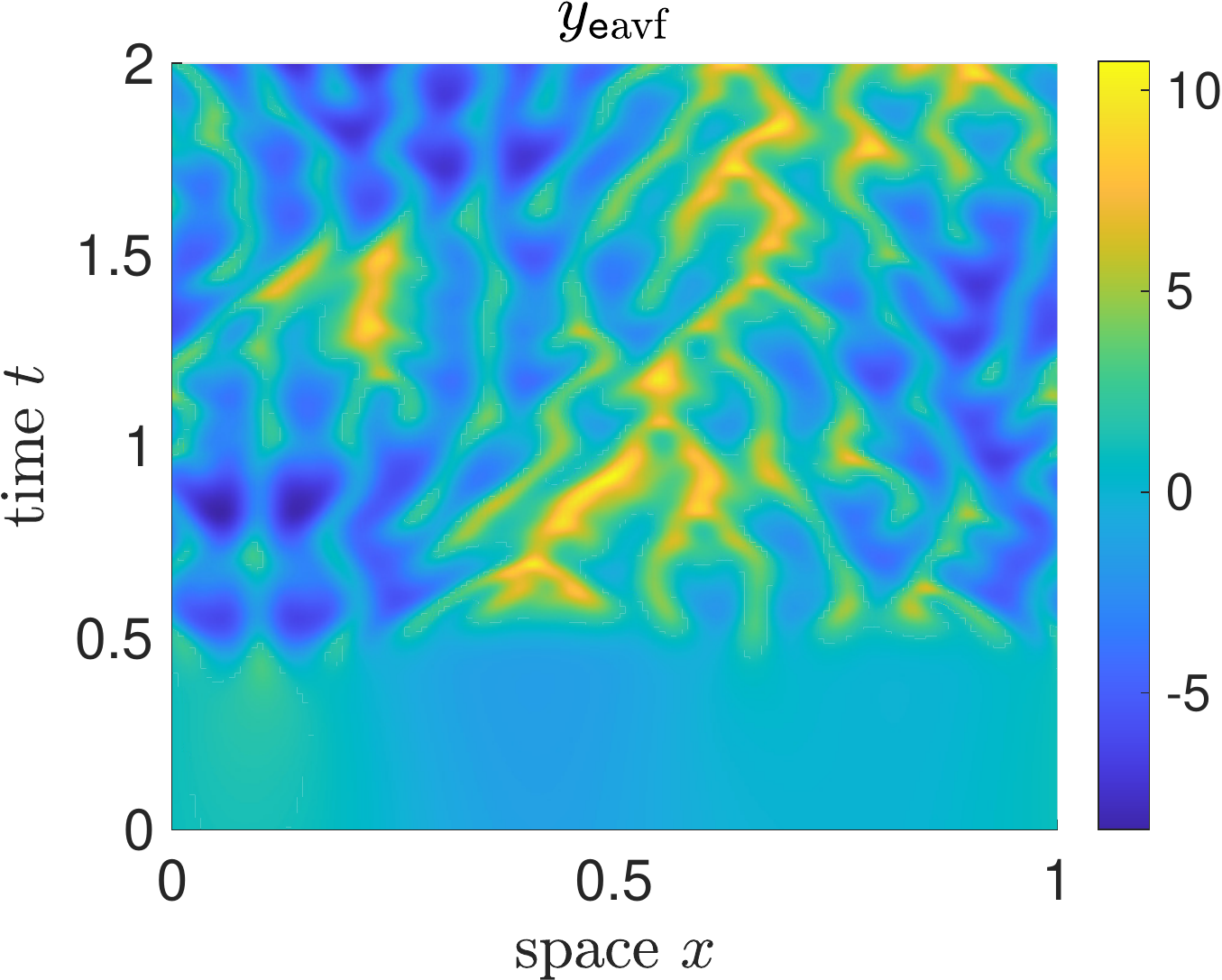}}
\caption{Case~$\lambda=0$. Zero-average component. Model~\eqref{sys-y-num}--\eqref{sys-haty-num}.\newline}\label{Fig:free-av-flame}
\end{figure}

\subsubsection{With output injection as~\eqref{matrix_bfI-spe}}
The lack of asymptotic stability is confirmed in Fig.~\ref{Fig:lam_small-flame} where we plot, for the case~$\lambda=0$, the evolution of the norm of the error for a larger time interval.
Here we recall that~$\norm{\Bigcdot}{H}^2=\norm{\Bigcdot}{L^2(\bbT^1_1)}^2$ and $\norm{\Bigcdot}{V}^2=\nu_2\norm{(-\Delta+\Id)\Bigcdot}{L^2(\bbT^1_1)}^2$ as in~\eqref{Vnorm}.
In the same Fig.~\ref{Fig:lam_small-flame}, we also observe that we are not able to achieve the desired exponential stability of the error dynamics for small values of~$\lambda$. This shows that we need to take large enough~$\lambda$.
In Fig.~\ref{Fig:lam_large-flame} we confirm that by taking large (enough) values of~$\lambda$ we are able to reach the exponential stability of the error dynamics, which agrees with the theoretical result.

 Finally, in Fig.~\ref{Fig:lam_large-flame-snap} we present time-snapshots of the state error estimate together with the corresponding output error (from each of the $40$~sensors).
\begin{figure}[ht]
\centering
\subfigure
{\includegraphics[width=0.45\textwidth,height=0.35\textwidth]{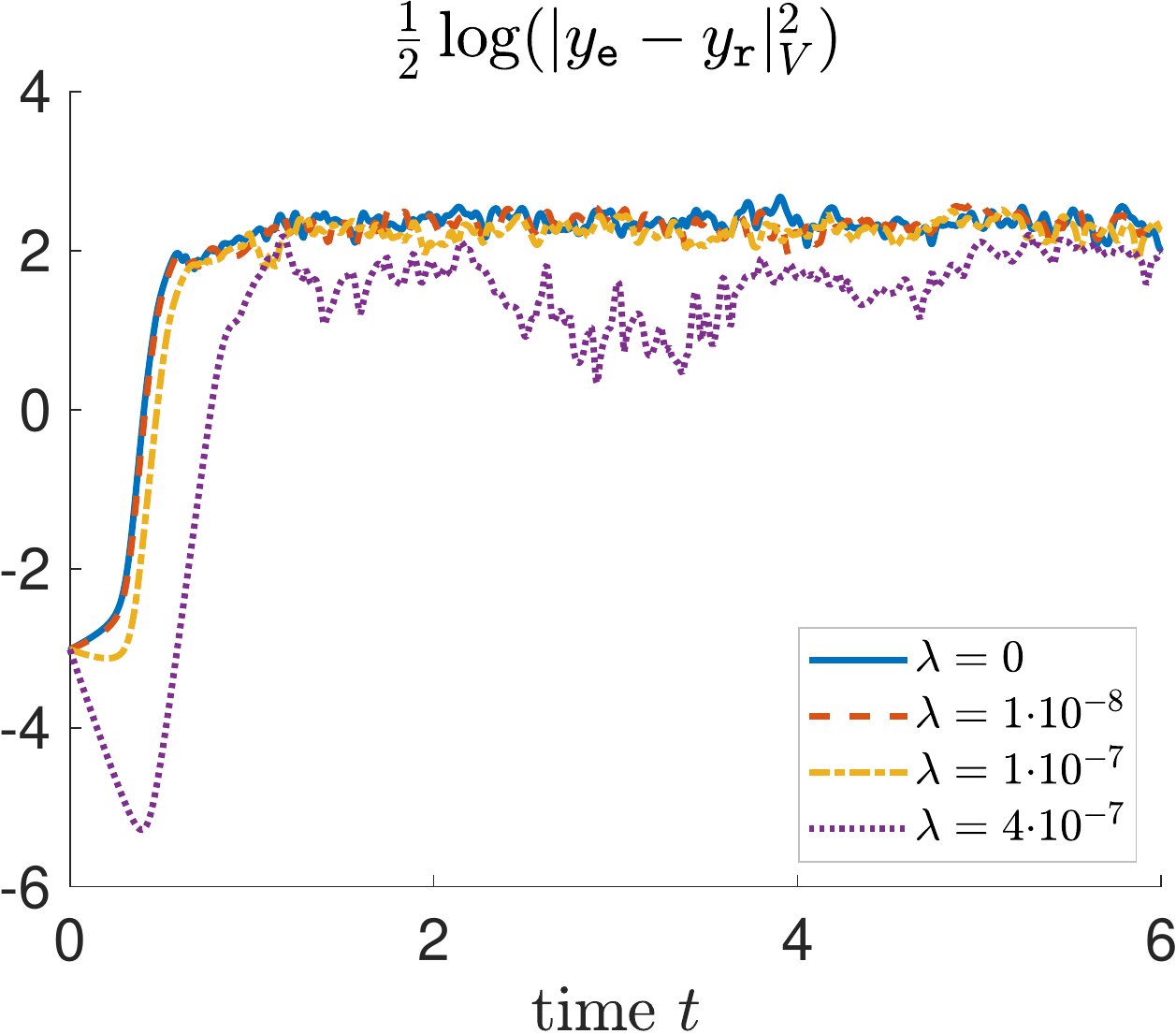}}
\qquad
\subfigure
{\includegraphics[width=0.45\textwidth,height=0.35\textwidth]{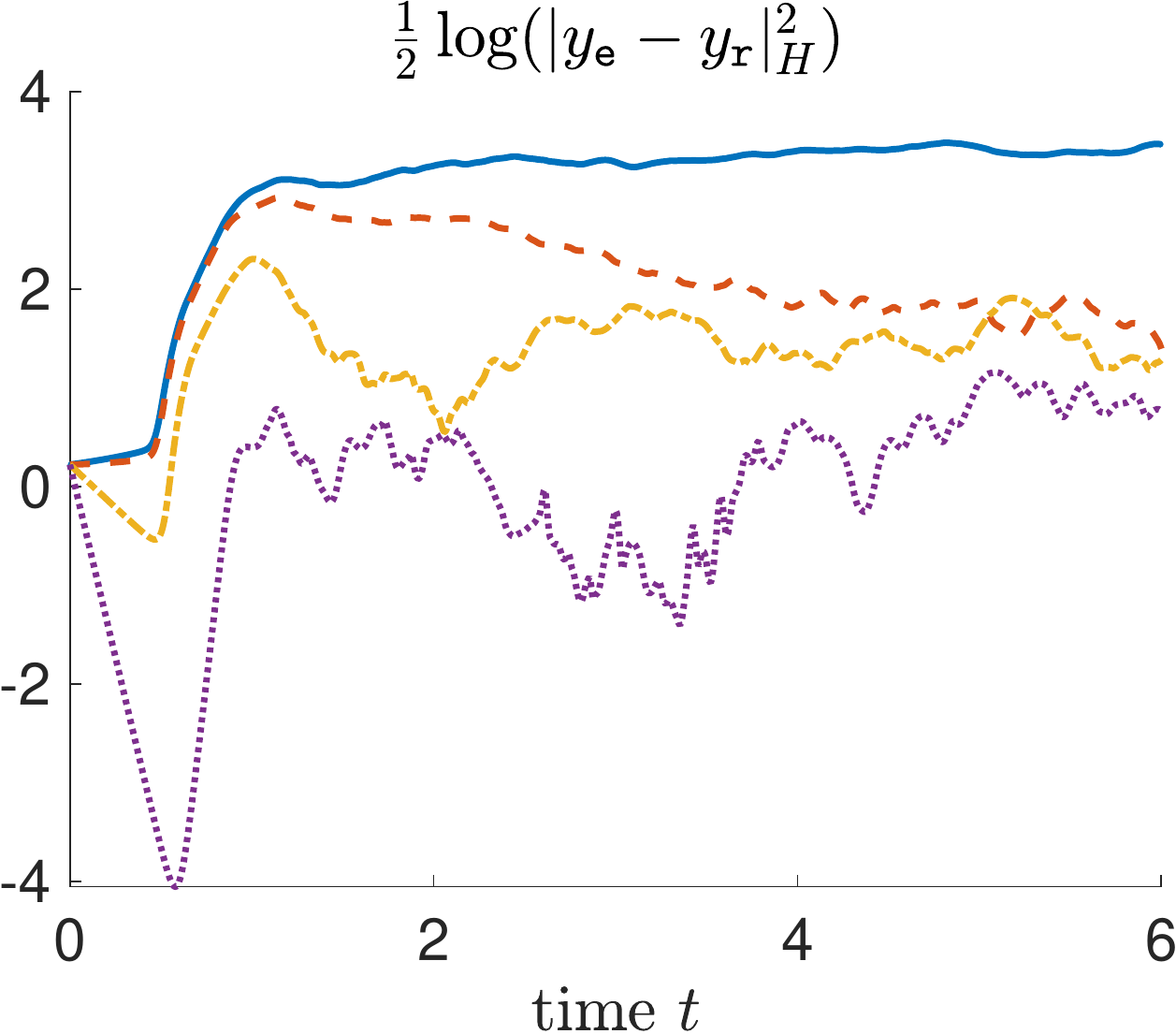}}
\caption{Estimate error for small~$\lambda\ge0$ in~\eqref{matrix_bfI-spe}. Model~\eqref{sys-y-num}--\eqref{sys-haty-num}.\newline\newline}\label{Fig:lam_small-flame}
\subfigure
{\includegraphics[width=0.45\textwidth,height=0.35\textwidth]{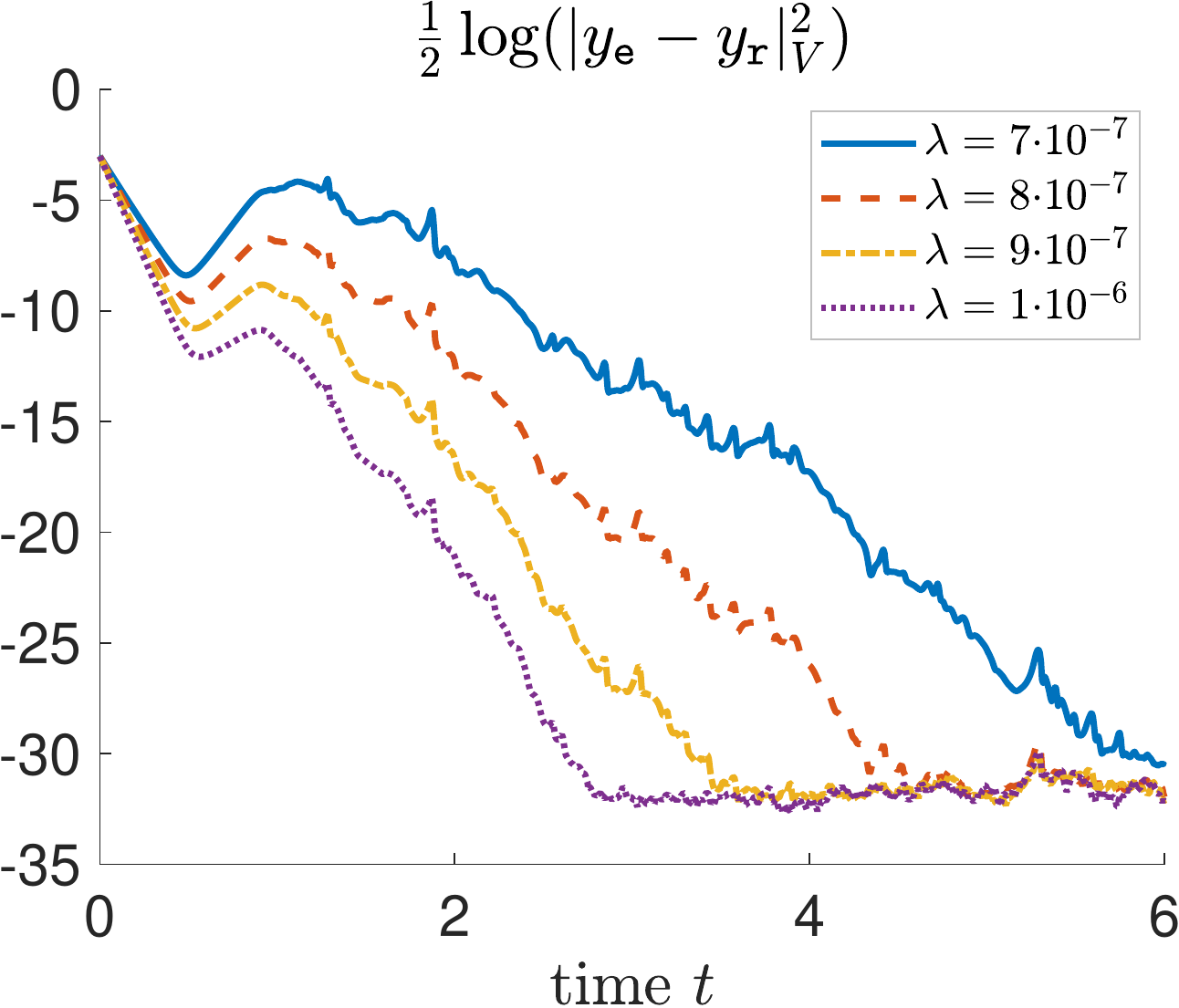}}
\qquad
\subfigure
{\includegraphics[width=0.45\textwidth,height=0.35\textwidth]{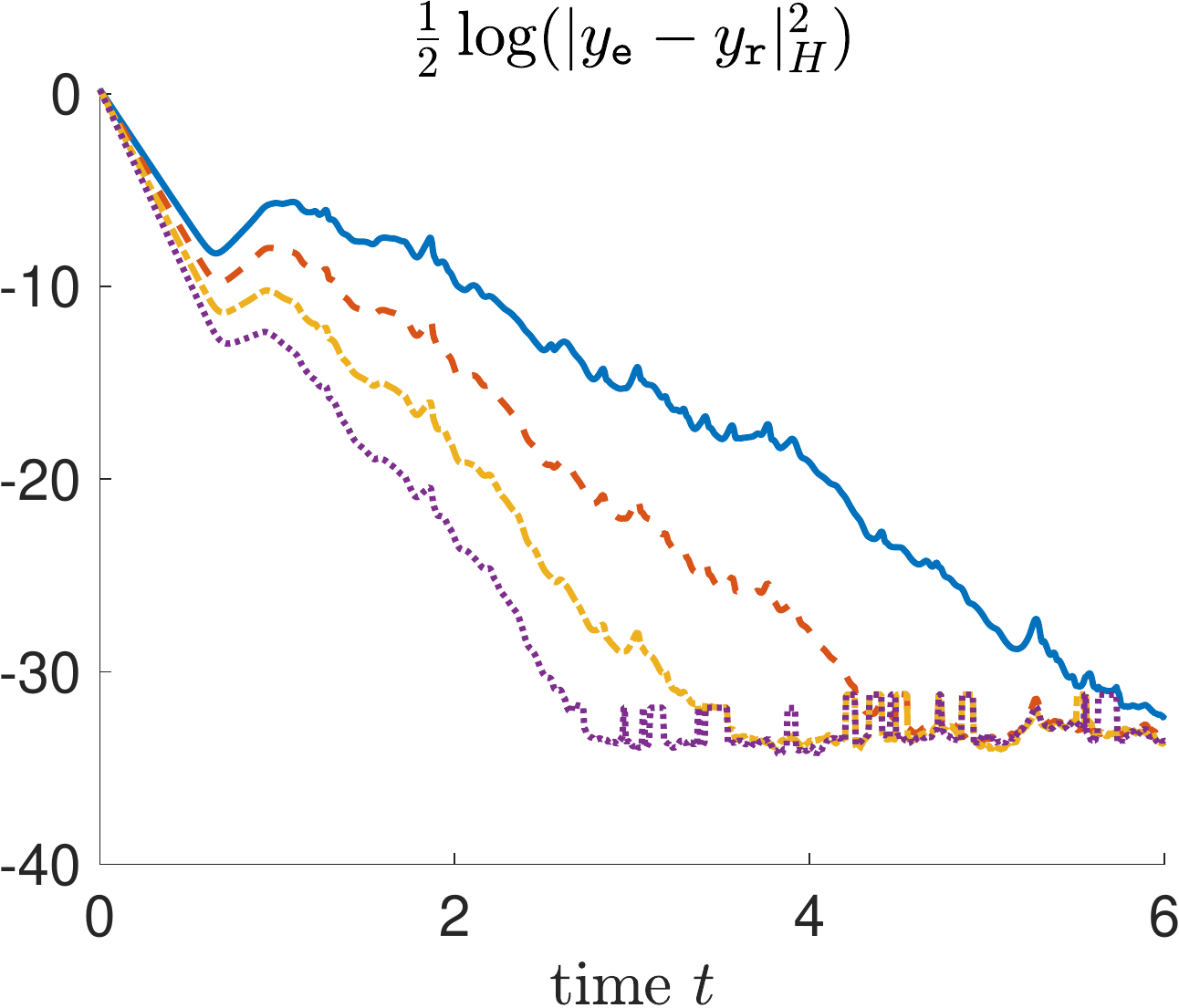}}
\caption{Estimate error for larger~$\lambda\ge0$ in~\eqref{matrix_bfI-spe}. Model~\eqref{sys-y-num}--\eqref{sys-haty-num}.\newline}\label{Fig:lam_large-flame}
\end{figure}

\begin{figure}[ht]
\centering
\subfigure
{\includegraphics[width=0.35\textwidth,height=0.30\textwidth]{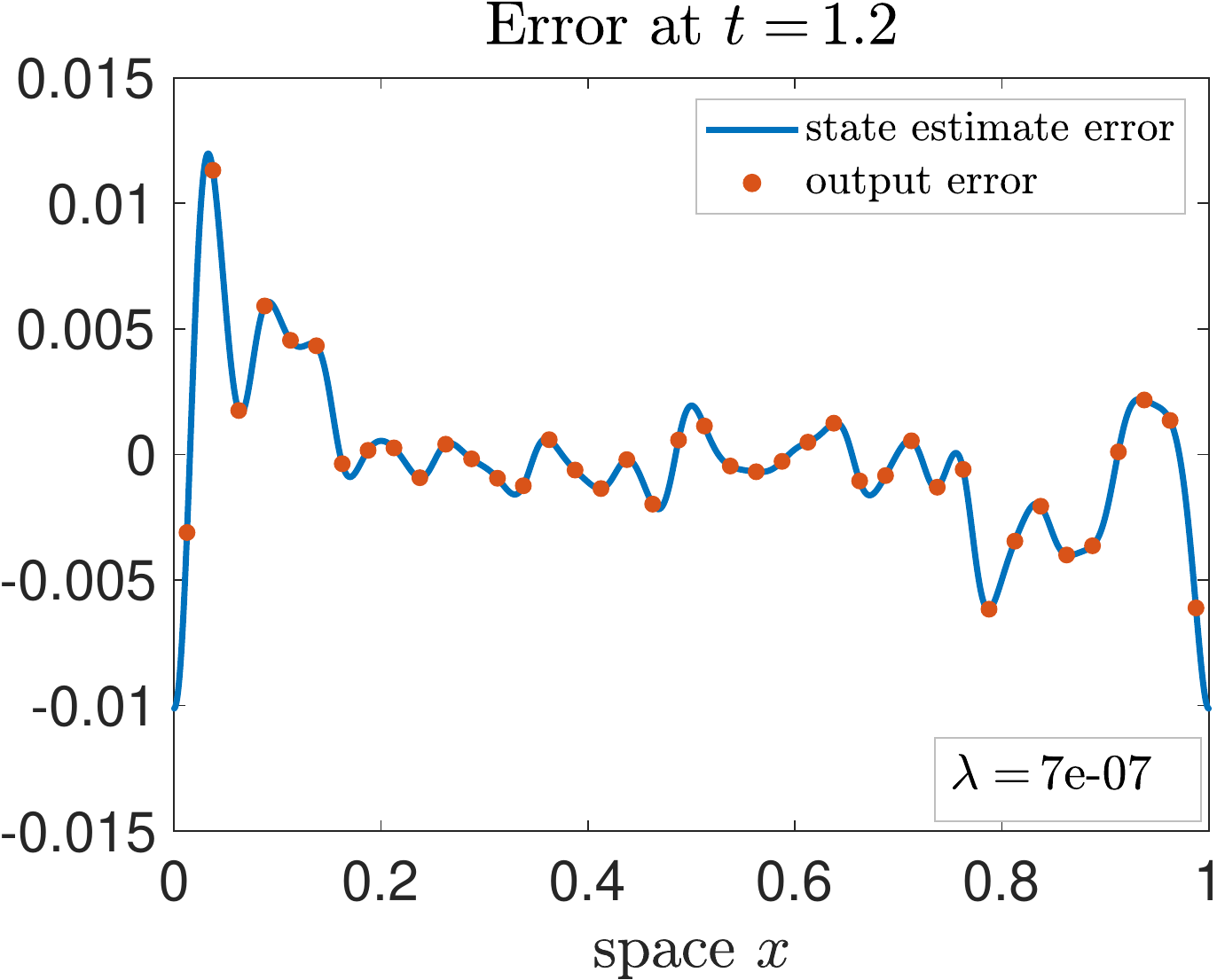}}
\subfigure
{\includegraphics[width=0.30\textwidth,height=0.30\textwidth]{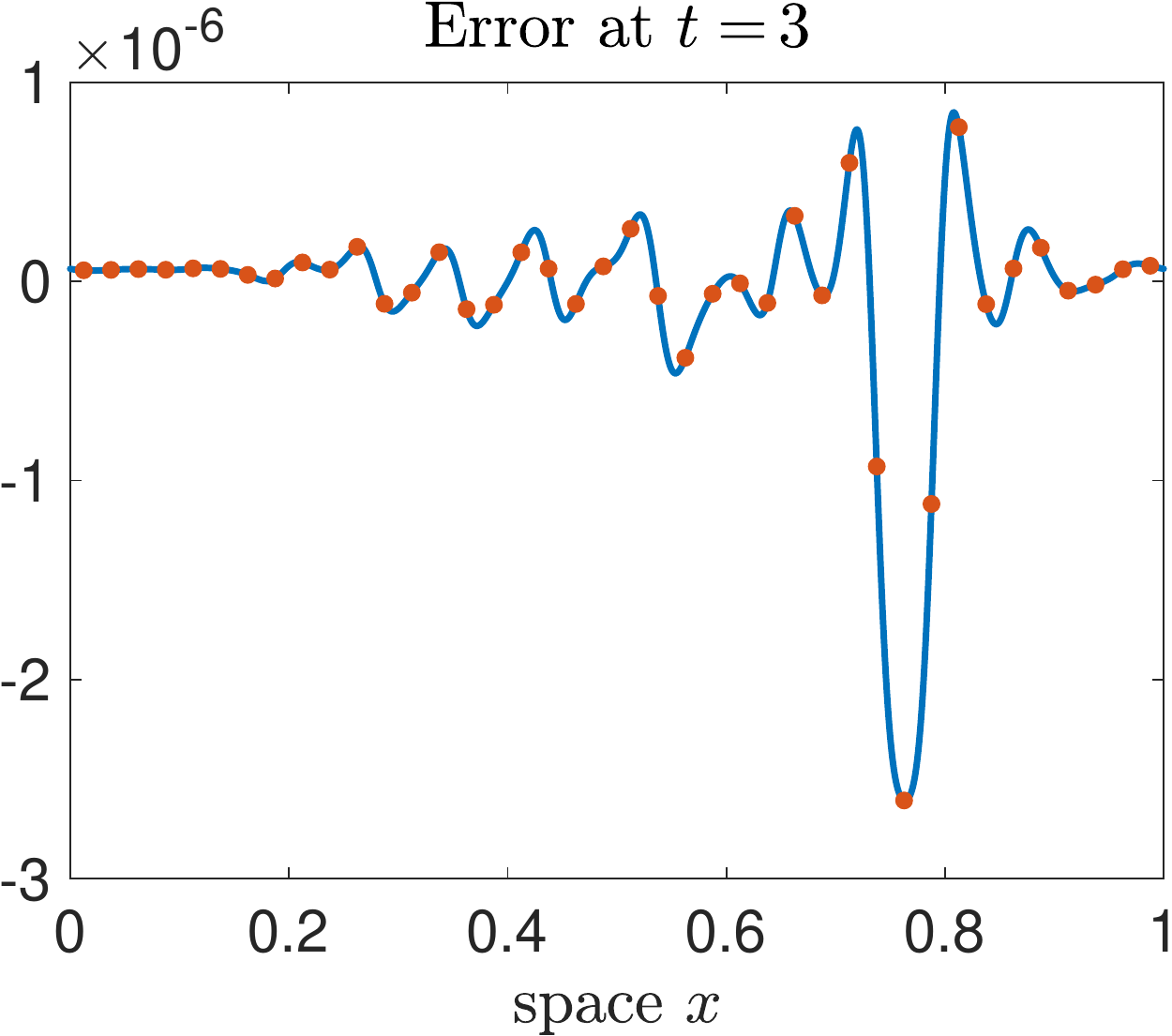}}
\subfigure
{\includegraphics[width=0.30\textwidth,height=0.30\textwidth]{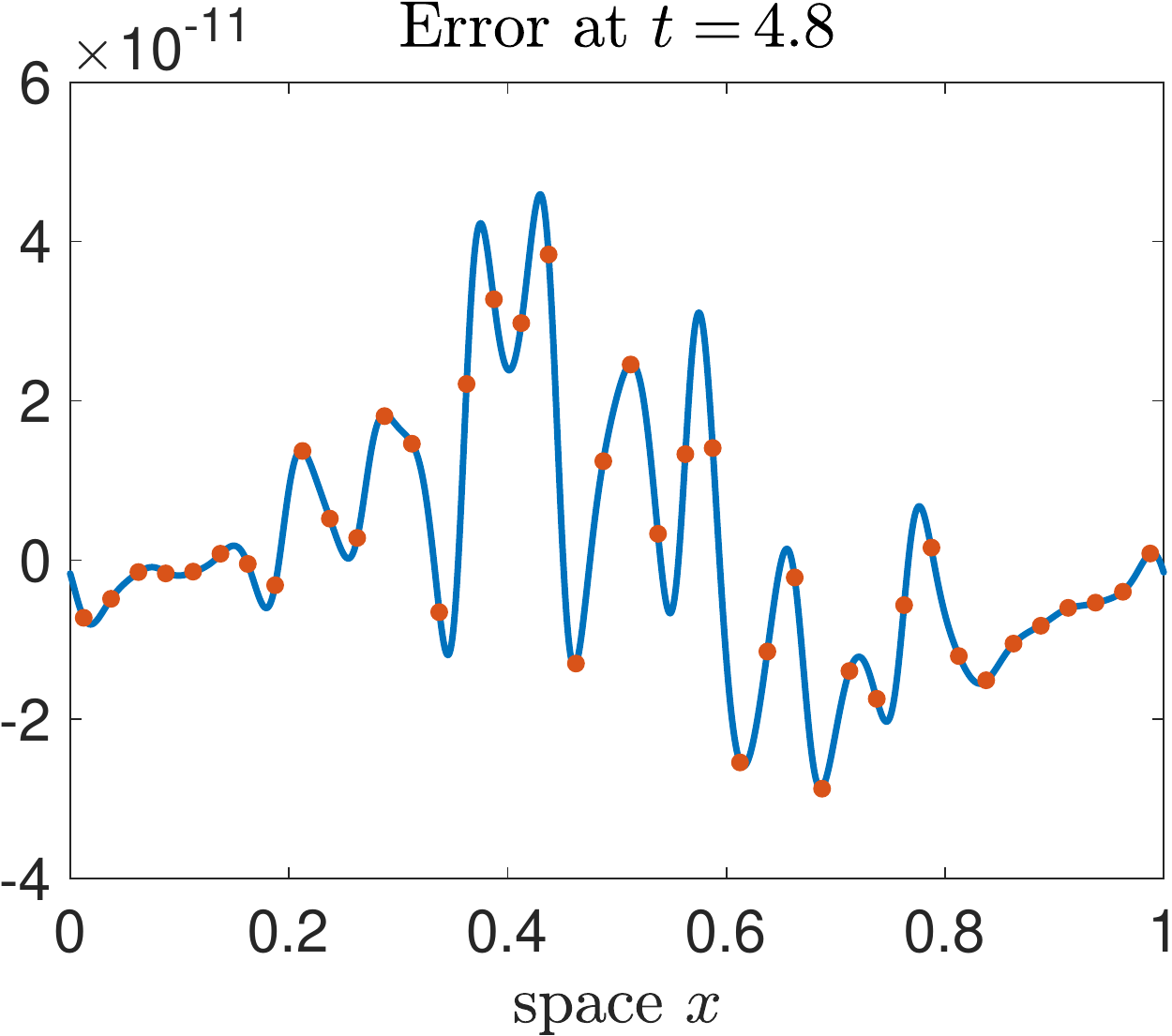}}
\caption{Snapshots of estimate and output error. Model~\eqref{sys-y-num}--\eqref{sys-haty-num}.}
\label{Fig:lam_large-flame-snap}
\end{figure}

\begin{remark}
The small  magnitude of the larger values of~$\lambda\approx 10^{-7}$ in Fig.~\ref{Fig:lam_large-flame} can be understood from the small magnitude of the parameter~$\nu_2=10^{-6}$. Recall that~$A=\nu_2A_0$ with~$A_0=(-\tfrac{\p^2}{\p x^2}+\Id)^2$ and the output injection operator is given by~\eqref{sys-haty-o-Inj}, thus we find that~$\fkI_{S}^{[\lambda,\Lambda]}=-\frac{\lambda}{\nu_2} A_0^{-2}\clZ_S^*\Lambda$ where~$\frac{\lambda}{\nu_2}\approx 10^{-1}$.
\end{remark}

\begin{remark}\label{R:choiceLambda}
The eigenvalues of the operator~$\fkA\coloneqq-\nu_2\tfrac{\p^4}{\p x^4}-\nu_1\tfrac{\p^2}{\p x^2}$ are given by
\begin{align}
&\fka_k=-\nu_2 16\pi^4(\tfrac{k-1}{2})^4+\nu_14\pi^2(\tfrac{k-1}{2})^2,&&\mbox{for~$k$ odd};\notag\\
&\fka_k=-\nu_2 16\pi^4(\tfrac{k}{2})^4+\nu_14\pi^2(\tfrac{k}{2})^2,&&\mbox{for~$k$ even}.\notag
\end{align}
Thus, from
\[
(-\nu_2 \xi^2+\nu_1)\xi^2\ge0\quad\Longleftrightarrow\quad\xi^2\le\tfrac{\nu_1}{\nu_2}
\]
and direct computations we can find that
\begin{align}
&\fka_k\ge0 \quad\Longleftrightarrow\quad k\le\tfrac1{\pi}(\tfrac{\nu_1}{\nu_2})^\frac12+1,&&\mbox{for~$k$ odd};\notag\\
&\fka_k\ge0 \quad\Longleftrightarrow\quad k\le\tfrac1{\pi}(\tfrac{\nu_1}{\nu_2})^\frac12,&&\mbox{for~$k$ even};\notag
\end{align}
and with~$(\nu_2,\nu_1)$ as in~\eqref{param-flame-nominal},
\begin{align}
&\fka_k\ge0 \quad\Longleftrightarrow\quad k\le\tfrac1{\pi}100+1\approx 32.8310,&&\mbox{for~$k$ odd};\notag\\
&\fka_k\ge0 \quad\Longleftrightarrow\quad k\le\tfrac1{\pi}100\approx 31.8310,&&\mbox{for~$k$ even}.\notag
\end{align}

Now, if we look at the 31 first eigenvalues of the operator~$A_0$ appearing in the injection operator we see that they range from~$1$ to~$(4\pi^2(\frac{(31-1)}2)^2+1)^2=((30\pi)^2+1)^2> 10^7$.
This range is too wide and it is (likely) at this point that the choice of the matrix~$\Lambda$ in the output injection operator can play a crucial role. For example, with~$\Lambda=\Id$ we would have that the magnitude of the forcing induced by the output injection operator on each of the first ~$31$ unstable spectral modes (i.e., on the first 31 components of the Galerkin space) would be quite different from each other. If we would need to apply a forcing of magnitude~$\fkm$ in the $31$st mode, this would correspond to a forcing of magnitude~$\fkm\times10^{7}$  in the~$1$st mode, which may be unpractical for applications/numerical simulations (e.g., at least, we may need to take a very small time step to capture/approximate the dynamics induced by such magnitudes, which means that we would likely be not able to compute the estimate in real time). 
A suitable choice of~$\Lambda$ may help to obtain an injection operator  inducing a forcing with closer magnitudes on each of the first~$S_\sigma$ spectral modes; note that with~$\Lambda$ as in~\eqref{param-flame-Lam}, 
\begin{align}\notag
& I(1:S_\sigma,1)= -\lambda\nu_2^{-1} (\underline\bfE_\fkX^\circ)^\perp (\clZ_Sy_\tte^N-\clZ_S y_\ttr^N).
 \end{align}
for the projection of~$I$ onto the first~$S_\sigma$ coordinates. Thus, the construction as in~\eqref{param-flame-Lam} will give us an injection forcing with close magnitudes on the first~$S_\sigma$ spectral modes.
\end{remark}

 \section{Numerical Simulations for the K--S fluid flow model}\label{S:simul-fluid}
 We consider the Kuramoto--Sivashinsky model for fluid flow with output as in~\eqref{sys-Intro-o}, that is, we consider the nominal system
\begin{subequations}\label{sys-y-num-fluid}
  \begin{align}
 &\tfrac{\p}{\p t}y_{\ttr}+\nu_2 \tfrac{\p^4}{\p x^4} y_{\ttr}+\nu_1 \tfrac{\p^2}{\p x^2}  y_{\ttr}+\nu_0y_{\ttr}\tfrac{\p}{\p x} y_{\ttr}=0,\\
 &\clZ_{S} y_{\ttr}\coloneqq\begin{bmatrix}
           y_{\ttr} (x^{S,1}) &  y_{\ttr}(x^{S,2}) & \dots & y_{\ttr}(x^{S,S_\sigma})
         \end{bmatrix}^\top\in\bbR^{S_\sigma\times 1},
\end{align}
\end{subequations}
and the Luenberger observer~\eqref{sys-KS-flame-obs}, with output injection operator~\eqref{sys-haty-o-Inj},
\begin{subequations}\label{sys-haty-num-fluid}
  \begin{align}
 &\tfrac{\p}{\p t}y_\tte+\nu_2 \tfrac{\p^4}{\p x^4} y_\tte+\nu_1 \tfrac{\p^2}{\p x^2} y_\tte+\nu_0y_{\tte}\tfrac{\p}{\p x} y_{\tte}=-\lambda A^{-1}\clZ_S^*\Lambda(\clZ_S y_\tte-\clZ_S y_{\ttr})\\
&\mbox{again with}\quad A=\nu_2(-\tfrac{\p^2}{\p x^2}+\Id)^2,
\end{align}
\end{subequations}
where, again, we take~$\Lambda\in\bbR^{S_\sigma\times S\sigma}$ as in~\eqref{param-flame-Lam}.

We consider all data as in~\eqref{param-flame-nominal}~\eqref{param-flame-inj}~\eqref{param-flame-Lam} with the exception of the coefficient of the nonlinearity which we set as
\[
\nu_0=1,
\]
as in~\cite{RodSeifu22-arx} (motivated by~\cite[sect.~4]{KassamTrefethen05}, after rescaling the spatial and temporal variables, cf.~\cite[Rem.~4.3]{RodSeifu22-arx}; see also~\cite[sect.~5.1]{Krogstad05}).

\subsection{Satisfiability of Assumption~\ref{A:realy} and~\ref{A:N}}
First of all we note that the only difference compared to the flame propagation model is the nonlinearity. Thus in order to be able to apply our abstract result to the fluid flow model we need to check Assumption~\ref{A:N}. Further, we need to check Assumption~\ref{A:realy} involving the subspace~$\fkG$ used in Assumption~\ref{A:N}.  That is, we need to revisit the arguments in Sections~\ref{sS:assumpt_aux} and~\ref{sS:assumpt-chkA1N}.

We can see that the average of the solutions of~\eqref{sys-y-num-fluid} is preserved, $\frac{\rmd}{\rmd t}\int_0^1 y_{\ttr}(t,x)\,\rmd x=0$. Since the nonlinearity vanishes again for constant functions, we assume the analogue of Assumption~\ref{A:realy-KS}.
\begin{assumption}\label{A:realy-KS-fluid}
Let~$\fkG=\rmD(A)\bigcap (\bbR\indf_{\bbT_1^1})^\perp$. There are constants~$C_{y_\ttr}\ge0$ and~$\tau_{y_\ttr}>0$ such that the targeted real state~$y_\ttr$, solving~\eqref{sys-y-num-fluid}, satisfies
\begin{align}
\sup_{s\ge0}\norm{y_{\ttr\fkG}(s)}{V}\le C_{y_\ttr}\quad\mbox{and}\quad\sup_{s\ge0}\norm{y_{\ttr\fkG}}{L^2((s,s+\tau_{y_\ttr}),\rmD(A))}<C_{y_\ttr}.\notag
\end{align}
\end{assumption}

Note that if~$y$ solves the flame propagation model, then~$\bfy=\tfrac{\p}{\p x}y$ solves the fluid flow model. Then, results on the satisfiability of the bound required in Assumption~\ref{A:realy-KS-fluid} can be derived from the analogous bounds for the fluid flow model. Recall also Remark~\ref{R:satisf-pers-bdd}.

Next, with~$d=y_2-y_1$, for~$\clN(t,y)\coloneqq \nu_0y\tfrac{\p}{\p x}y$ we find that
\begin{align}
&\nu_0^{-1}(\clN(t,y_1)-\clN(t,y_2))= y_1\tfrac{\p}{\p x}d+d\tfrac{\p}{\p x}y_2,
\notag
\end{align}
from which we obtain
\begin{align}
&\nu_0^{-1}\norm{\clN(t,y_1)-\clN(t,y_2)}{H}\le\norm{y_1}{L^\infty(\bbT_1^1)}\norm{\tfrac{\p}{\p x} d}{L^2(\bbT_1^1)}
+\norm{d}{L^\infty(\bbT_1^1)}\norm{\tfrac{\p}{\p x} y_2}{L^2(\bbT_1^1)}
\notag
\end{align}
and, since~$d=1$, we can use~$V=W^{2,2}(\bbT_1^1)\xhookrightarrow{}W^{1,2}(\bbT_1^1)\xhookrightarrow{}L^\infty(\bbT_1^1)$ to obtain
\begin{align}
2\nu_0^{-1}\norm{\clN(t,y_1)-\clN(t,y_2)}{H}&\le C_1(\norm{y_1}{W^{1,2}(\bbT_1^1)}+\norm{y_2}{W^{1,2}(\bbT_1^1)})\norm{d}{W^{1,2}(\bbT_1^1)}
\notag\\
&\le C_2(\norm{y_1}{V}+\norm{y_2}{V})\norm{d}{V}.
\notag
\end{align}
Thus, Assumption~\ref{A:N} holds with~$n=1$ and~$(\zeta_{11},\zeta_{21},\delta_{11},\delta_{21})=(1,0,1,0)$.

\subsection{Performance of the observer}
In Fig.~\ref{Fig:free-fluid} we see that the free dynamics (with~$\lambda=0$) is likely not asymptotically unstable. This fact is supported by the corresponding norm evolution shown in Fig.~\ref{Fig:lam_small-fluid}. In the same figure we also see that small values of~$\lambda$ do not give us an exponential observer able to give us an estimate converging to the targeted state. We can obtain such an observer by increasing~$\lambda$ as shown in Fig.~\ref{Fig:lam_large-fluid}. Time-snapshots  of the state error estimate are given in Fig.~\ref{Fig:lam_large-flame-snap}. Thus, also in this example the simulations agree with the theoretical result.
\begin{figure}[ht]
\centering
\subfigure[targeted state.]
{\includegraphics[width=0.45\textwidth,height=0.35\textwidth]{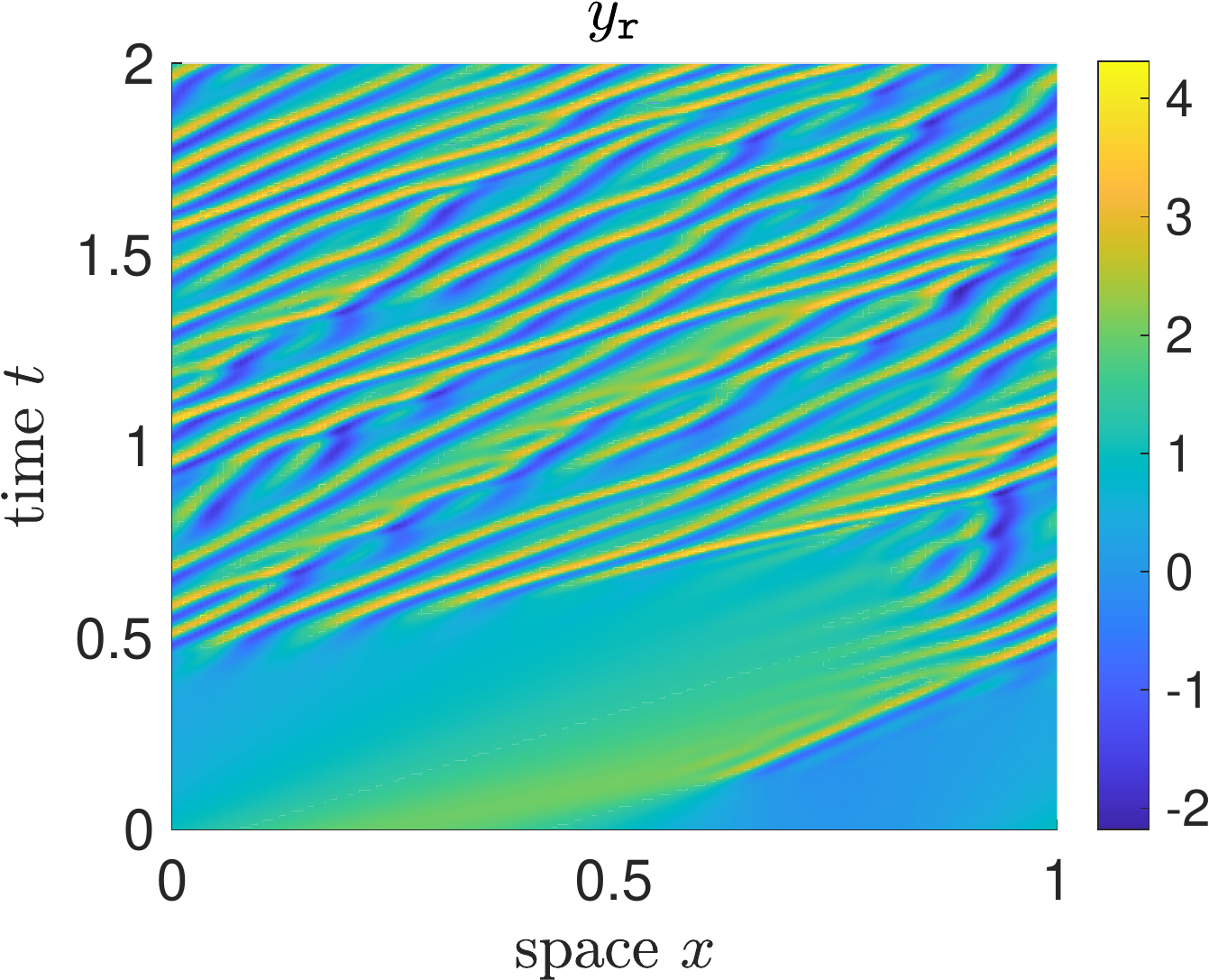}}
\qquad
\subfigure[free estimate.]
{\includegraphics[width=0.45\textwidth,height=0.35\textwidth]{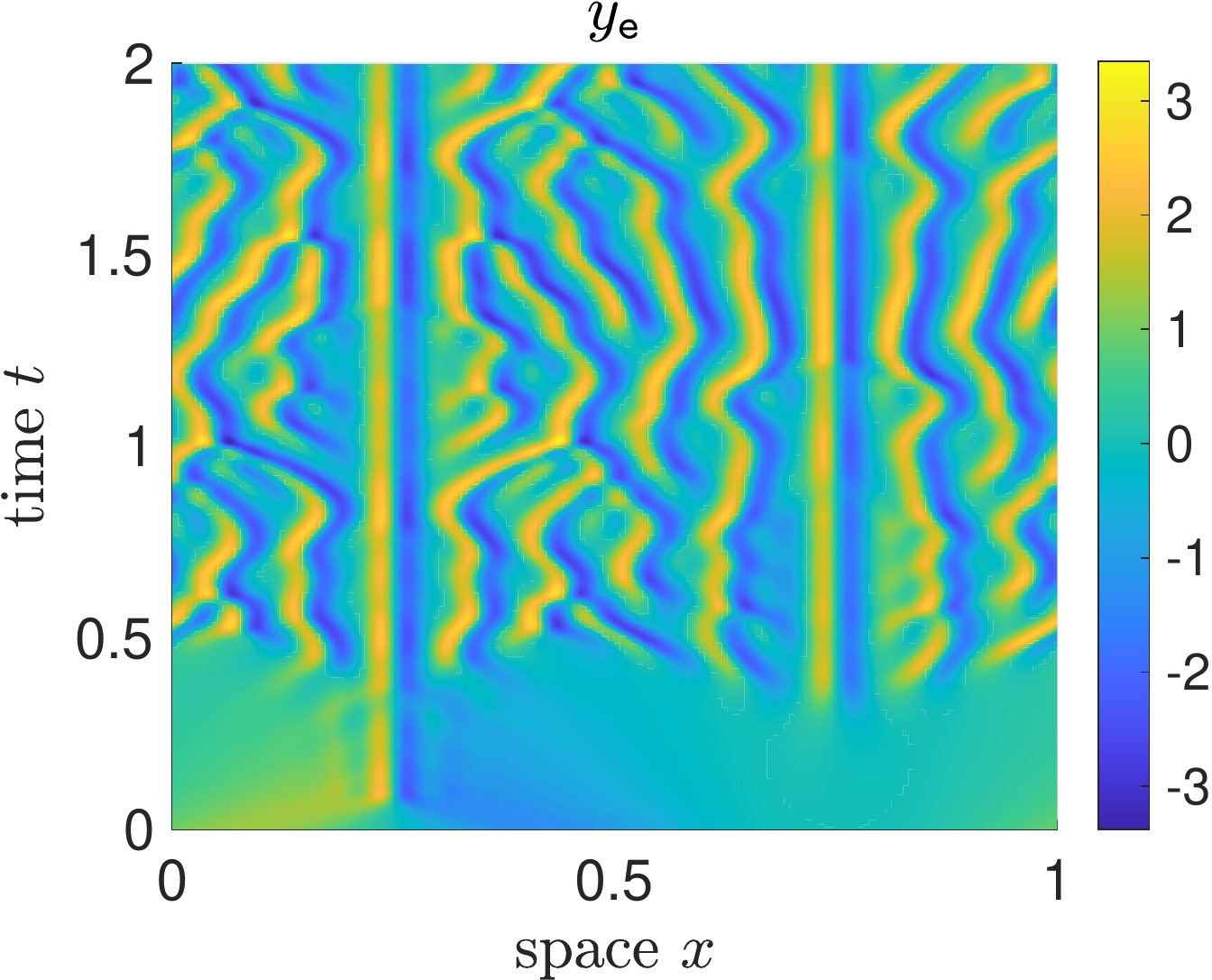}}
\caption{Case~$\lambda=0$ (free dynamics). Model~\eqref{sys-y-num-fluid}--\eqref{sys-haty-num-fluid}.\newline\newline}\label{Fig:free-fluid}
\end{figure}
\begin{figure}[ht]
\centering
\subfigure
{\includegraphics[width=0.45\textwidth,height=0.35\textwidth]{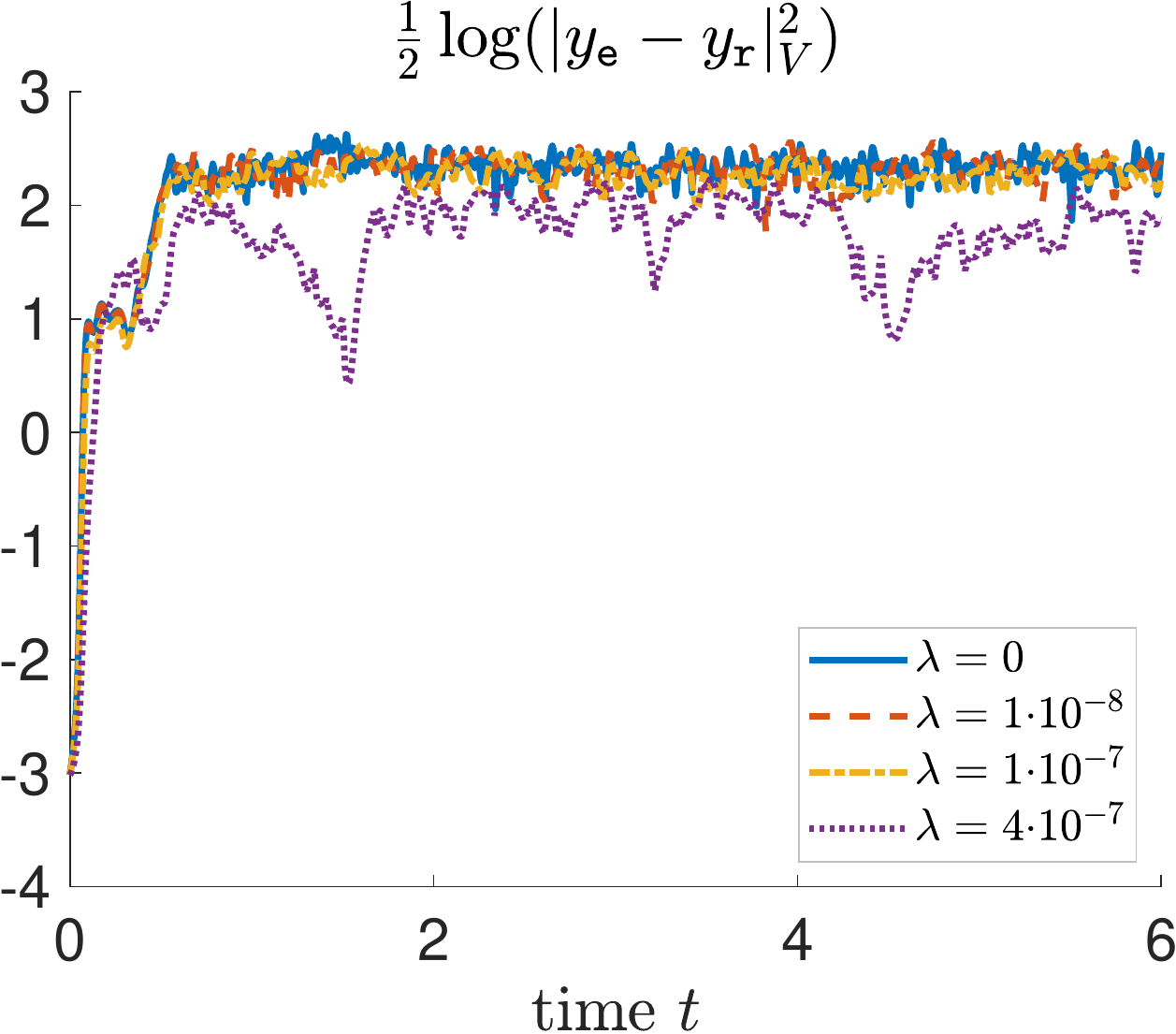}}
\qquad
\subfigure
{\includegraphics[width=0.45\textwidth,height=0.35\textwidth]{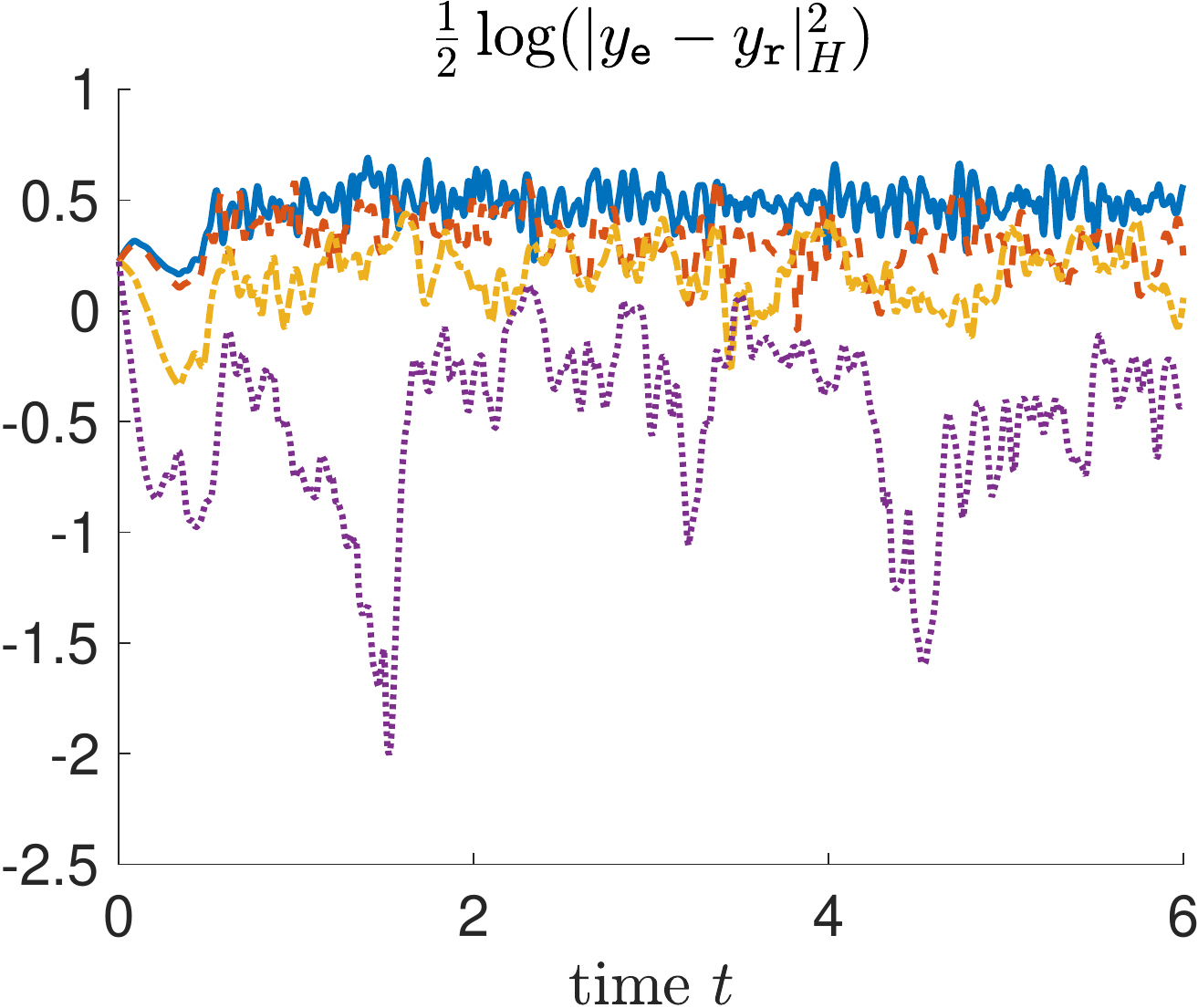}}
\caption{Estimate error  for small~$\lambda\ge0$ in~\eqref{matrix_bfI-spe}. Model~\eqref{sys-y-num-fluid}--\eqref{sys-haty-num-fluid}.\newline\newline}\label{Fig:lam_small-fluid}
\subfigure
{\includegraphics[width=0.45\textwidth,height=0.35\textwidth]{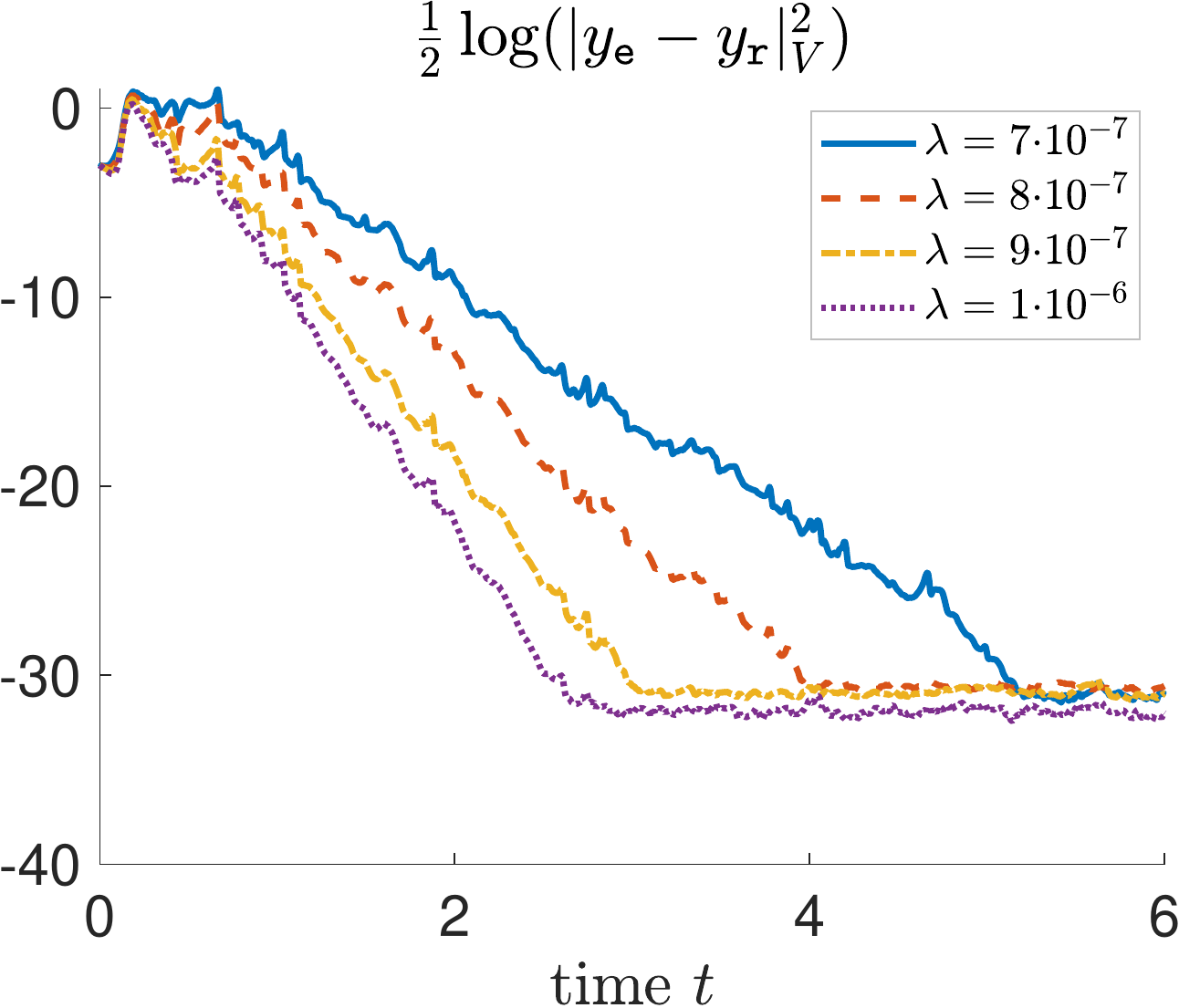}}
\qquad
\subfigure
{\includegraphics[width=0.45\textwidth,height=0.35\textwidth]{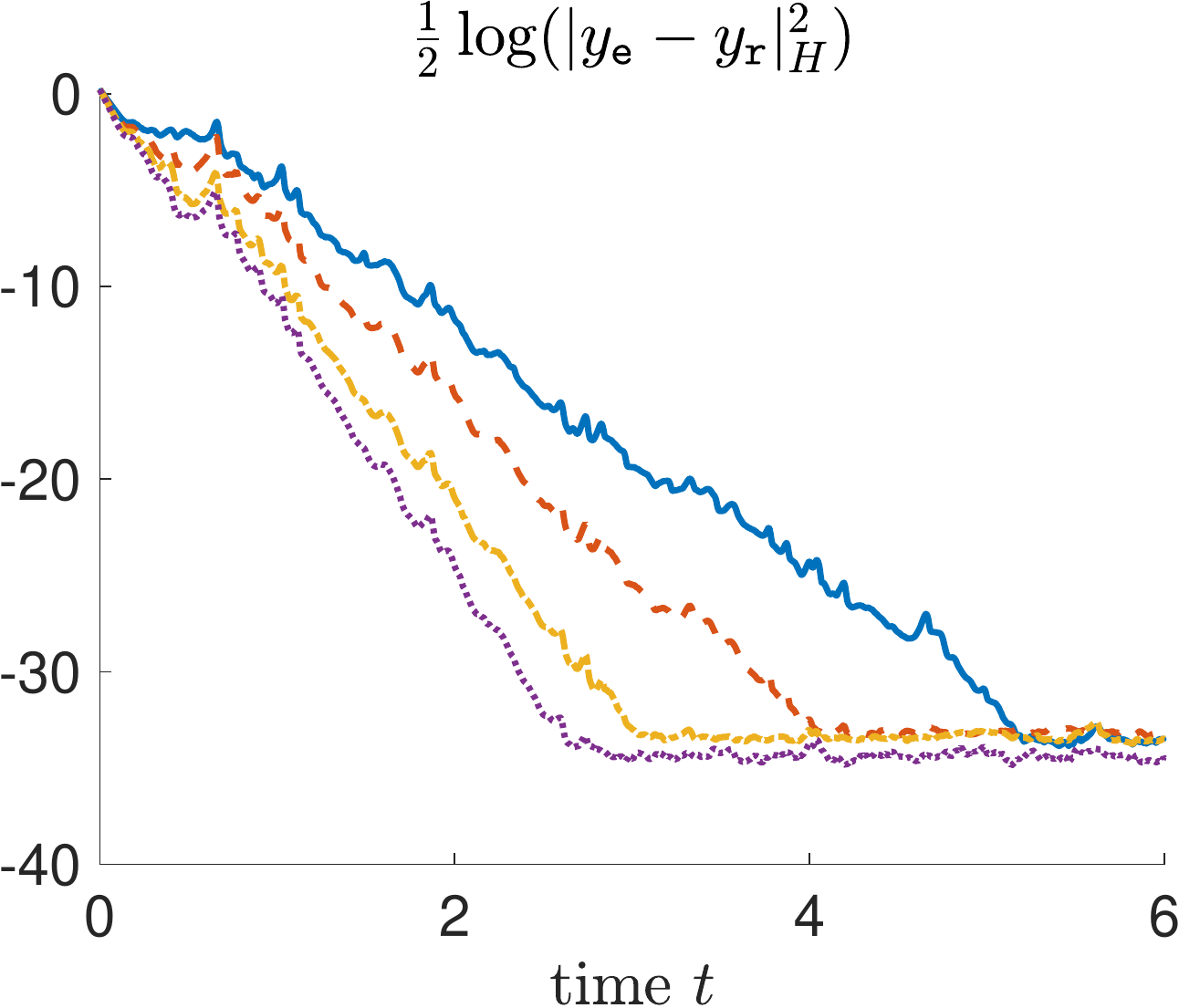}}
\caption{Estimate error  for larger~$\lambda\ge0$ in~\eqref{matrix_bfI-spe}. Model~\eqref{sys-y-num-fluid}--\eqref{sys-haty-num-fluid}.\newline}\label{Fig:lam_large-fluid}
\end{figure}

\begin{figure}[ht]
\centering
\subfigure
{\includegraphics[width=0.32\textwidth,height=0.30\textwidth]{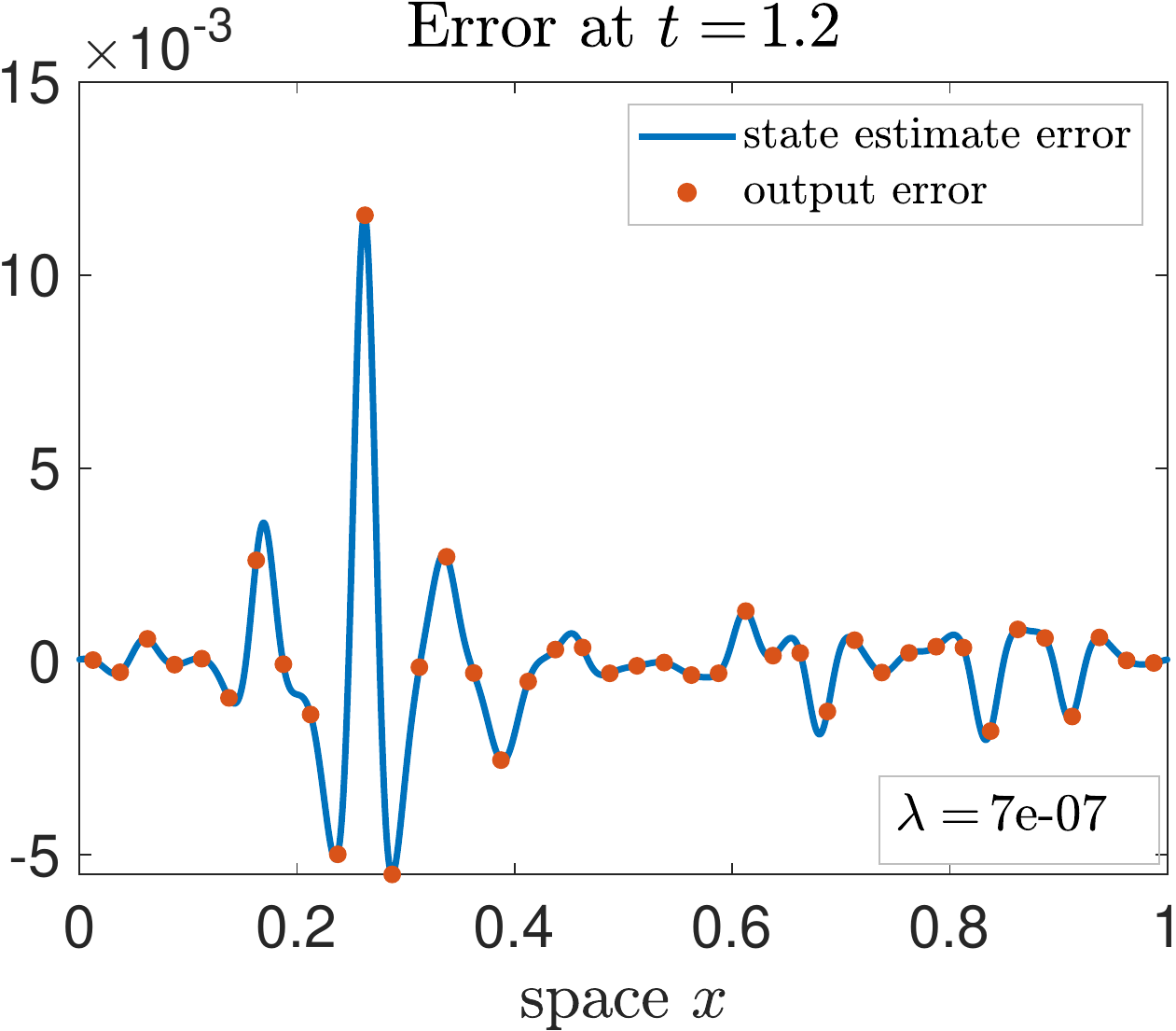}}
\subfigure
{\includegraphics[width=0.32\textwidth,height=0.30\textwidth]{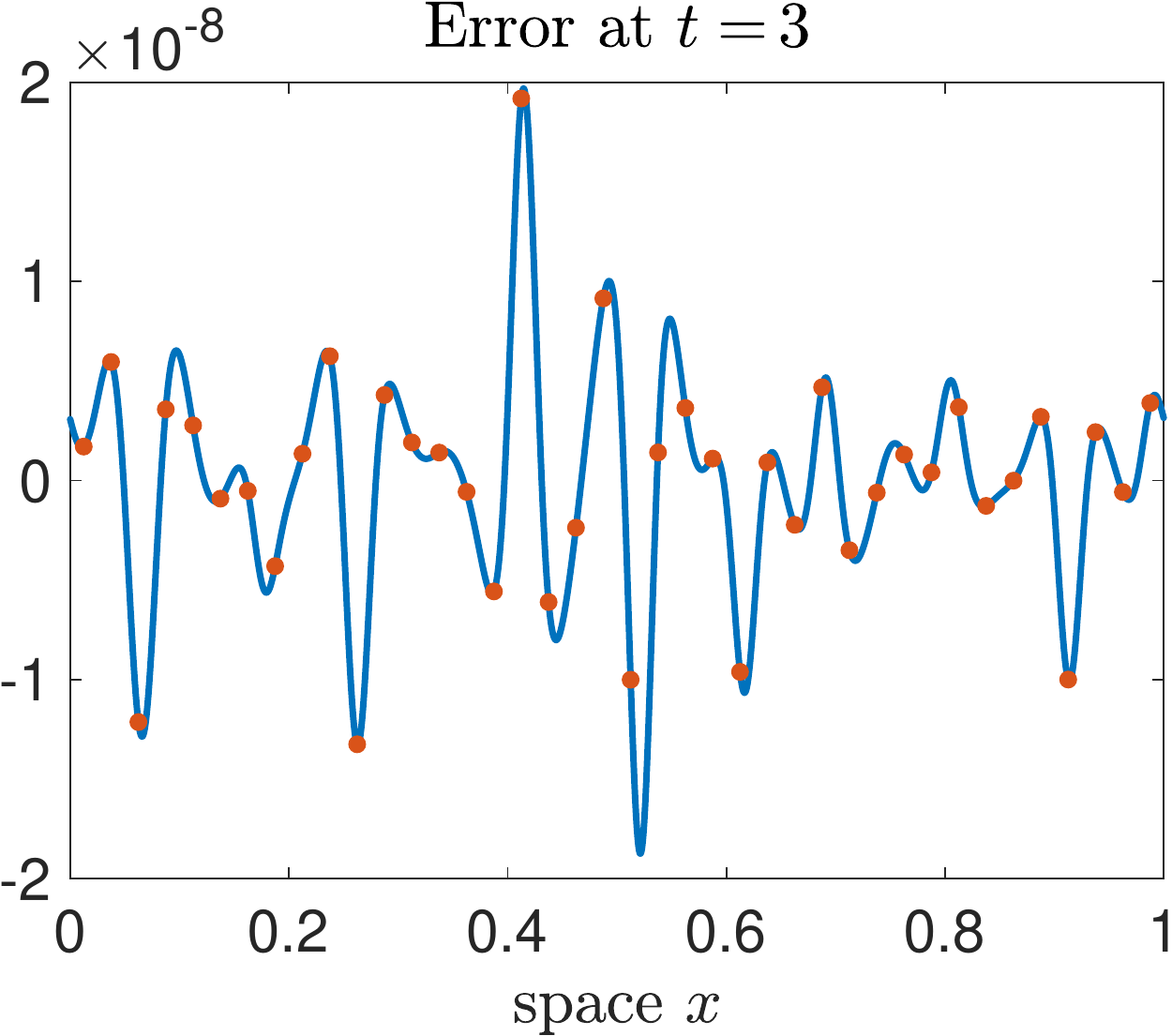}}
\subfigure
{\includegraphics[width=0.32\textwidth,height=0.30\textwidth]{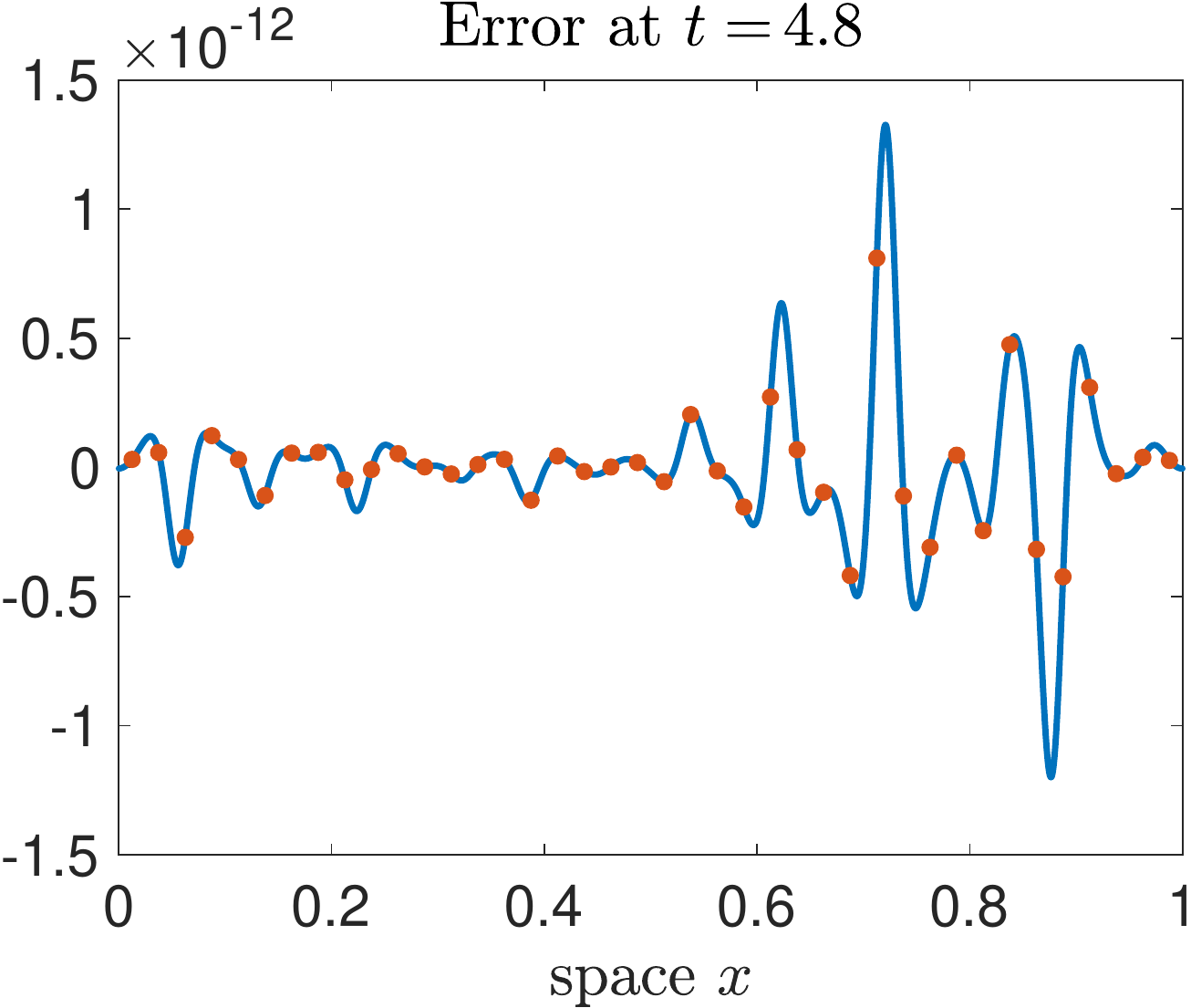}}
\caption{Snapshots of estimate and output error. Model~\eqref{sys-y-num-fluid}--\eqref{sys-haty-num-fluid}.}
\label{Fig:lam_large-fluid-snap}
\end{figure}

\section{Final remarks}\label{S:finalremarks}
We proposed a Luenberger-type observer to estimate the state of parabolic-like equations from the output of the measurement of the state at a finite number of spatial points~$x^i\in\Omega$, $1\le i\le S_\sigma$. These measurements correspond to a finite number of sensors which are the delta distributions located at those spatial points. The result is in fact a corollary of an abstract general result where the set of sensors is a finite set of elements of a suitable Hilbert space, namely, the continuous dual~$V'$ of a regular enough Sobolev space~$V\subset\clC(\Omega)$. Thus the result can cover a more general class of sensors/measurements. We have performed  simulations for the case of the Kuramoto--Sivashinsky equation modeling flame propagation in one-dimensional spatial domain~$\Omega=[0,1)\subset\bbR$ under periodic boundary conditions. We observed the estimate given by the observer converges exponentially to the state of the equation.

The output injection operator is explicit, which makes it easy to implement in applications and enable us to obtain an estimate in real time. This operator involves an arbitrary positive definite matrix~$\Lambda\in\bbR^{S_\sigma\times S_\sigma}$, which can be taken arbitrary at the theoretical level. In the simulations we observed that the choice of~$\Lambda$ can play an important role on the tuning/choice of the observer parameter gain~$\lambda$. Future research could address the choice of~$\Lambda$, for example, either in higher-dimensional  physical spatial domains~$\Omega\subset\bbR^d$, $d\in\{1,2\}$, or under other types or boundary conditions.

The location of the sensors can also play a crucial role on the practicability of the observer in concrete problems, thus, this could also be a subject for future research.

State estimates are demanded in the  implementation of stabilizing feedback control applications, where the control input~$u(t)=K(y(t))$ depends on the entire state~$y(t)$ of the system. This entire state is unavailable, in general, and it is at this point that an estimate~$y_{\rm est}(t)$ provided by an observer can give us a way to compute an approximation~$u_{\rm est}(t)=K(y_{\rm est}(t))$ of such input control.
Hence, another subject of interest is the investigation of such inherent closed-loop systems (coupling an observer with a  feedback control system). This is, in general, a nontrivial problem for nonlinear systems, where the so-called {\em separation principle} does not hold. Note that the free dynamics of the error of the estimate provided by the observer depends on the targeted state~$y_\ttr$, e.g., see~$\fkN_{y_\ttr}$ in~\eqref{sys-z}. In the linear case where~$\clN=0=\fkN_{y_\ttr}$, we see that the dynamics of the error is independent of~$y_\ttr$, in this sense the estimation problem can be {\em separated} from the stabilization problem (cf.~\cite[syst.~(1.12)]{Rod21-aut}).

Finally, from the applications point of view, it would be  interesting to compare the observer we proposed with others we find in the literature. For example, in~\cite[Eq.~(8)]{AzouaniOlsonTiti14}, for an output~$w\in\bbR^{S_\sigma\times1}$ of point measurements the authors propose to use an output injection operator as
$-\lambda {\textstyle\sum\limits_{i=1}^{S_\sigma}} w_{i,1} \indf_{\omega_{S,i}}$. That is, comparing with the one we propose in~\eqref{sys-haty-o-Inj-delta}, for the case~$\Lambda=\Id$, we see that in~\cite{AzouaniOlsonTiti14} it is proposed to use the indicator functions~$\indf_{\omega_{S,i}}$ (for suitable subdomains~$\omega_{S,i}\subset\Omega$) instead of the functions~$A^{-1}\deltafun_{x^{S,i}}$. As mentioned in Remark~\ref{R:choiceLambda} during the numerical simulations we found that the choice of~$\Lambda$ is crucial for the practical performance of our observer, which is likely due to the large range of the eigenvalues of~$A$. Thus, we can guess that the observer proposed in~\cite[Eq.~(8)]{AzouaniOlsonTiti14} could have some advantages since it does not involve~$A^{-1}$. On the other hand, the class of systems in~\cite{AzouaniOlsonTiti14} require the solutions of the free dynamics to be globally defined for time (i.e., for all time~$t>0$) and, in particular, assume the apriori knowledge of the existence of a finite number of determining nodes (for the case of output point measurements). In this manuscript we consider a class of nonlinearities which allow the solutions of the free dynamics to blow up in finite-time, only the targeted solution corresponding to the state to be estimated is required to be defined for all~$t>0$.

 Finally, considering, for simplicity, the case~$\Lambda=\Id$, we can see that the use of~$A^{-1}$ in the output injection operator~$\fkI_S^{[\lambda,\Id]}w=-\lambda {\textstyle\sum\limits_{i=1}^{S_\sigma}} w_{i,1} A^{-1}\deltafun_{x^{S,i}}$, $w=\clZ_Sz$, is related to the fact that we are considering strong solutions and looking for an error decreasing in~$V=\rmD(A^\frac12)$-norm, namely, to have the monotonicity~$(\fkI_S^{[\lambda,\Id]}w,Az)_H=-\lambda\norm{w}{\bbR^{S_\sigma}}^2$. For one-dimensional models (i.e., evolving in spatial intervals~$\Omega\subset\bbR$), for which weak solutions are well defined (for initial states in~$H$ and for external/injection forces taking values in~$V'$), if we aim  for an error decreasing in~$H=\rmD(A^0)$-norm, then we can expect that we can omit~$A^{-1}$ and take~$\underline\fkI_S^{[\lambda,\Id]}w=-\lambda {\textstyle\sum\limits_{i=1}^{S_\sigma}} w_{i,1} \deltafun_{x^{S,i}}$ instead,  which again leads us  to the monotonicity~$\langle\underline\fkI_S^{[\lambda,\Id]}w,z\rangle_{V',V}=-\lambda\norm{w}{\bbR^{S_\sigma}}^2$. Note that for higher-dimensional models (i.e., evolving in spatial domains~$\Omega\subset\bbR^d$, $d>1$) we do not have that~$\deltafun_{x^{S,i}}$, thus we may still need to take~$\underline\fkI_S^{[\lambda,\Id]}w=-\lambda {\textstyle\sum\limits_{i=1}^{S_\sigma}} w_{i,1} A^{-\rho}\deltafun_{x^{S,i}}$ for some positive~$\rho$, so that~$A^{-\rho}\deltafun_{x^{S,i}}$). Of course, the details must be checked depending on the application/model we have at hand. In any case, assumptions on the nonlinearity to deal with weak solutions could be the analogue of Assumption~\ref{A:N} that we find in~\cite[Assum.~2.4]{KunRodWalter21} in the context of feedback stabilization, where essentially the triple~$(H,V,\rmD(A))$ in Assumption~\ref{A:N} is replaced by~$(V',H,V)$.

In real world applications, often measurements are subject to small errors and mathematical models are subject to disturbances/uncertainty. Therefore, the ``robustness'' of the observer that we propose under such noisy measurements and model disturbances is an interesting subject, which could be the focus of a future work. We refer the reader to~\cite[Eq.~(3)]{BessaihOlsonTiti15} where measurements are assumed to be perturbed by a random error.

\appendix
\section*{Appendix}
\setcounter{section}{1}
\setcounter{theorem}{0} \setcounter{equation}{0}
\numberwithin{equation}{section}

\subsection{}\label{Apx:balls-and-cells}
Recalling Remark~\ref{R:balls-and-cells} we show here that for~$m\in\bbN_+$ and~$p\in\bbN$, the number of monomials spanning the space of polynomials in the~$m$ variabes~$x_1,\dots,x_m$ with degree at most~$p$ is given by
\begin{equation}\label{monom-lep}
\#\fkM_m^{[\le p]}=\tfrac{(m+p)!}{m!p!}
\end{equation}
where as usual~$(n+1)!\coloneqq (n+1)(n!)$ is the factorial of the positive integer~$n+1$, and~$0!\coloneqq 1$.

We follow the so-called~``Balls\&Cells'' (or, ``Stars\&Bars'') argument popularized in~\cite{Feller68}. This argument is used in~\cite[Ch.~II, sect.~5]{Feller68} to show that the number of nonnegative integer solutions of the equation
\begin{equation}\notag
\kappa_1+\kappa_2+\dots+\kappa_m=k\in\bbN,
\end{equation}
corresponding to the number of monomials of degree ~$k$, is given by
\begin{equation}\label{monom=k}
\#\fkM_m^{[= k]}\coloneqq\tfrac{(m+k-1)!}{k!(m-1)!}.
\end{equation}
Note that we can write~$\#\fkM_m^{[\le p]}=\sum\limits_{k=0}^{p}\#\fkM_m^{[= k]}$. Now, the identity between the latter sum and the claimed quantity~$\frac{(m+p)!}{m!p!}$, as in~\eqref{monom-lep}, is left in~\cite[Ch.~II, sect.~12, Eq.~(12.8)]{Feller68} as an exercise, with several hints. For the convenience of the reader we show now this identity by Induction on~$p$ for a fixed~$m$.
As a base step, for~$p=0$, we have that~$\#\fkM_m^{[\le 0]}=\sum\limits_{k=0}^{0}\#\fkM_m^{[= k]}=\#\fkM_m^{[= 0]}=\frac{(m+0-1)!}{0!(m-1)!}=1=\frac{(m+0)!}{0!m!}$. Next, let~$q\in\bbN$ and, as Induction step, assume that~\eqref{monom-lep} holds with~$p=q$. Then, we find
\begin{align}
 \#\fkM_m^{[\le q+1]}=\sum\limits_{k=0}^{q+1}\#\fkM_m^{[= k]}=\#\fkM_m^{[\le q]}+\#\fkM_m^{[= q+1]}
 =\tfrac{(m+q)!}{m!q!}+\tfrac{(m+q)!}{(q+1)!(m-1)!}=\tfrac{(m+q+1)!}{(q+1)!m!},\notag
 \end{align}
 where we used~\eqref{monom=k} with~$k=q+1$. Therefore,~\eqref{monom-lep} holds with~$p=q+1$.



\bigskip\noindent
{\bf Aknowlegments.}
D. Seifu was supported by the State of Upper Austria and Austrian
Science Fund (FWF): P 33432-NBL, S. Rodrigues acknowledges partial support from the
same grant.

\bibliographystyle{plainurl}

\bibliography{KS_ObsRef}

\begin{thebibliography}{10}

\bibitem{AguiarHesp09}
A.~P. Aguiar and J.~P. Hespanha.
\newblock Robust filtering for deterministic systems with implicit outputs.
\newblock {\em Systems Control Lett.}, 58(4):263--270, 2009.
\newblock \href {http://dx.doi.org/10.1016/j.sysconle.2008.11.005}
  {\path{doi:10.1016/j.sysconle.2008.11.005}}.

\bibitem{AzouaniOlsonTiti14}
A.~Azouani, E.~Olson, and E.~S. Titi.
\newblock Continuous data assimilation using general interpolant observables.
\newblock {\em J. Nonlinear Sci.}, 24(2):277--304, 2014.
\newblock \href {http://dx.doi.org/10.1007/s00332-013-9189-y}
  {\path{doi:10.1007/s00332-013-9189-y}}.

\bibitem{BarRodShi11}
V.~Barbu, S.~S. Rodrigues, and A.~Shirikyan.
\newblock Internal exponential stabilization to a nonstationary solution for
  {3D} {N}avier--{S}tokes equations.
\newblock {\em SIAM J. Control Optim.}, 49(4):1454--1478, 2011.
\newblock \href {http://dx.doi.org/10.1137/100785739}
  {\path{doi:10.1137/100785739}}.

\bibitem{BessaihOlsonTiti15}
H.~Bessaih, E.~Olson, and E.~S. Titi.
\newblock Continuous data assimilation with stochastically noisy data.
\newblock {\em Nonlinearity}, 28(3):729--753, 2015.
\newblock \href {http://dx.doi.org/10.1088/0951-7715/28/3/729}
  {\path{doi:10.1088/0951-7715/28/3/729}}.

\bibitem{BuchotRaymondTiago15}
J.-M. Buchot, J.-P. Raymond, and J.~Tiago.
\newblock Coupling estimation and control for a two dimensional {B}urgers type
  equation.
\newblock {\em ESAIM Control Optim. Calc. Var.}, 21(2):535--560, 2015.
\newblock \href {http://dx.doi.org/10.1051/cocv/2014037}
  {\path{doi:10.1051/cocv/2014037}}.

\bibitem{DemengelDem12}
F.~Demengel and G.~Demengel.
\newblock {\em Functional Spaces for the Theory of Elliptic Partial
  Differential Equations}.
\newblock Universitext. Springer, 2012.
\newblock \href {http://dx.doi.org/10.1007/978-1-4471-2807-6}
  {\path{doi:10.1007/978-1-4471-2807-6}}.

\bibitem{Feller68}
W.~Feller.
\newblock {\em An Introduction to Probability Theory and Its Applications,
  Volume 1, 3rd Edition}, volume~1.
\newblock John Wiley \&Sons, 3rd edition, 1968.

\bibitem{FengMazzucato21}
Y.~Feng and A.~L. Mazzucato.
\newblock Global existence for the two-dimensional {K}uramoto--{S}ivashinsky
  equation with advection.
\newblock {\em Comm. Partial Differential Equations}, 79(2):279--306, 2021.
\newblock \href {http://dx.doi.org/10.1080/03605302.2021.1975131}
  {\path{doi:10.1080/03605302.2021.1975131}}.

\bibitem{GiacomelliOtto05}
L.~Giacomelli and F.~Otto.
\newblock New bounds for the {K}uramoto--{S}ivashinsky equation.
\newblock {\em Comm. Pure Appl. Math.}, 58(3):297--318, 2005.
\newblock \href {http://dx.doi.org/10.1002/cpa.20031}
  {\path{doi:10.1002/cpa.20031}}.

\bibitem{GoluskinFantuzzi19}
D.~Goluskin and G.~Fantuzzi.
\newblock Bounds on mean energy in the {K}uramoto--{S}ivashinsky equation
  computed using semidefinite programming.
\newblock {\em Nonlinearity}, 32(5):1705--1730, 2019.
\newblock \href {http://dx.doi.org/10.1088/1361-6544/ab018b}
  {\path{doi:10.1088/1361-6544/ab018b}}.

\bibitem{JadachowskiMeurerKugi13}
L.~Jadachowski, T.~Meurer, and A.~Kugi.
\newblock State estimation for parabolic {PDE}s with reactive-convective
  non-linearities.
\newblock In {\em Proceedings of the 2013 European Control Conference (ECC),
  Zurich, Switzerland}, pages 1603--1608, July 2013.
\newblock \href {http://dx.doi.org/10.23919/ECC.2013.6669588}
  {\path{doi:10.23919/ECC.2013.6669588}}.

\bibitem{JadachowskiMeurerKugi15}
L.~Jadachowski, T.~Meurer, and A.~Kugi.
\newblock Backstepping observers for linear {PDE}s on higher-dimensional
  spatial domains.
\newblock {\em Automatica J. IFAC}, 51:85--97, 2015.
\newblock \href {http://dx.doi.org/10.1016/j.automatica.2014.10.108}
  {\path{doi:10.1016/j.automatica.2014.10.108}}.

\bibitem{KangFridman19}
W.~Kang and E.~Fridman.
\newblock Distributed stabilization of {K}orteweg--{deVries}--{B}urgers
  equation in the presence of input delay.
\newblock {\em Automatica J. IFAC}, 100:260--263, 2019.
\newblock \href {http://dx.doi.org/10.1016/j.automatica.2018.11.025}
  {\path{doi:10.1016/j.automatica.2018.11.025}}.

\bibitem{KatzFridman22}
W.~Kang and E.~Fridman.
\newblock Finite-dimensional boundary control of the linear
  {K}uramoto--{S}ivashinsky equation under point measurement with guaranteed l2
  -gain.
\newblock {\em IEEE Trans. Automat. Control}, (to appear), 2022.
\newblock \href {http://dx.doi.org/10.1109/TAC.2021.3121234, IEEE}
  {\path{doi:10.1109/TAC.2021.3121234, IEEE}}.

\bibitem{KassamTrefethen05}
A.-K. Kassam and L.~N. Trefethen.
\newblock Fourth-order time-stepping for stiff {\sc pde}s.
\newblock {\em SIAM J. Sci. Comput.}, 26(4):1214--1233, 2005.
\newblock \href {http://dx.doi.org/10.1137/S1064827502410633}
  {\path{doi:10.1137/S1064827502410633}}.

\bibitem{Krogstad05}
S.~Krogstad.
\newblock Generalized integrating factor methods for stiff pdes.
\newblock {\em J. Comput. Phys.}, 203(1):72--88, 2005.
\newblock \href {http://dx.doi.org/10.1016/j.jcp.2004.08.006}
  {\path{doi:10.1016/j.jcp.2004.08.006}}.

\bibitem{KunRodWalter21}
K.~Kunisch, S.~S. Rodrigues, and D.~Walter.
\newblock Learning an optimal feedback operator semiglobally stabilizing
  semilinear parabolic equations.
\newblock {\em Appl. Math. Optim.}, 84(1):277--318, 2021.
\newblock \href {http://dx.doi.org/10.1007/s00245-021-09769-5}
  {\path{doi:10.1007/s00245-021-09769-5}}.

\bibitem{MarkowichTitiTrabelsi16}
P.~A. Markowich, E.~S. Titi, and S.~Trabelsi.
\newblock Continuous data assimilation for the three-dimensional
  {B}rinkman--{F}orchheimer-extended {D}arcy model.
\newblock {\em Nonlinearity}, 29(4):1292--1328, 2016.
\newblock \href {http://dx.doi.org/10.1088/0951-7715/29/4/1292}
  {\path{doi:10.1088/0951-7715/29/4/1292}}.

\bibitem{Meurer13}
T.~Meurer.
\newblock On the extended {L}uenberger-type observer for semilinear
  distributed-parameter systems.
\newblock {\em IEEE Trans. Automat. Control}, 58(7):1732--1743, 2013.
\newblock \href {http://dx.doi.org/10.1109/TAC.2013.2243312}
  {\path{doi:10.1109/TAC.2013.2243312}}.

\bibitem{MeurerKugi09}
T.~Meurer and A.~Kugi.
\newblock Tracking control for boundary controlled parabolic {PDE}s with
  varying parameters: Combining backstepping and differential flatness.
\newblock {\em Automatica J. IFAC}, 45:1182--1194, 2009.
\newblock \href {http://dx.doi.org/10.1016/j.automatica.2009.01.006}
  {\path{doi:10.1016/j.automatica.2009.01.006}}.

\bibitem{NicolaenkoSchTem85}
B.~Nicolaenko, B.~Scheurer, and R.~Temam.
\newblock Some global dynamical properties of the {K}uramoto--{S}ivashinsky
  equations: Nonlinear stability and attractors.
\newblock {\em Phys. D}, 16(2):155--183, 1985.
\newblock \href {http://dx.doi.org/10.1016/0167-2789(85)90056-9}
  {\path{doi:10.1016/0167-2789(85)90056-9}}.

\bibitem{OlsonTiti03}
E.~Olson and E.~S. Titi.
\newblock Determining modes for continuous data assimilation in {2D}
  turbulence.
\newblock {\em J. Stat. Phys.}, 113(5-6):799--840, 2003.
\newblock \href {http://dx.doi.org/10.1023/A:1027312703252}
  {\path{doi:10.1023/A:1027312703252}}.

\bibitem{Otto09}
F.~Otto.
\newblock Optimal bounds on the {K}uramoto--{S}ivashinsky equation.
\newblock {\em J. Funct. Anal.}, 257(7):2188--2245, 2009.
\newblock \href {http://dx.doi.org/10.1016/j.jfa.2009.01.034}
  {\path{doi:10.1016/j.jfa.2009.01.034}}.

\bibitem{RamdaniTucsnakValein16}
K.~Ramdani, M.~Tucsnak, and J.~Valein.
\newblock Detectability and state estimation for linear age-structured
  population diffusion models.
\newblock {\em ESAIM: M2AN}, 50(6):1731--1761, 2016.
\newblock \href {http://dx.doi.org/10.1051/m2an/2016002}
  {\path{doi:10.1051/m2an/2016002}}.

\bibitem{Rod20-eect}
S.~S. Rodrigues.
\newblock Semiglobal exponential stabilization of nonautonomous semilinear
  parabolic-like systems.
\newblock {\em Evol. Equ. Control Theory}, 9(3):635--672, 2020.
\newblock \href {http://dx.doi.org/10.3934/eect.2020027}
  {\path{doi:10.3934/eect.2020027}}.

\bibitem{Rod21-amo}
S.~S. Rodrigues.
\newblock Feedback boundary stabilization to trajectories for {3D}
  {N}avier--{S}tokes equations.
\newblock {\em Appl. Math. Optim.}, 84(2), 2021.
\newblock S1149--S1186.
\newblock \href {http://dx.doi.org/10.1007/s00245-017-9474-5}
  {\path{doi:10.1007/s00245-017-9474-5}}.

\bibitem{Rod21-sicon}
S.~S. Rodrigues.
\newblock Oblique projection exponential dynamical observer for nonautonomous
  linear parabolic-like equations.
\newblock {\em SIAM J. Control Optim.}, 59(1):464--488, 2021.
\newblock RICAM Report no.~2020-33.
\newblock \href {http://dx.doi.org//10.1137/19M1278934}
  {\path{doi:/10.1137/19M1278934}}.

\bibitem{Rod21-aut}
S.~S. Rodrigues.
\newblock Oblique projection output-based feedback exponential stabilization of
  nonautonomous parabolic equations.
\newblock {\em Automatica J. IFAC}, 129:109621, 2021.
\newblock \href {http://dx.doi.org/10.1016/j.automatica.2021.109621}
  {\path{doi:10.1016/j.automatica.2021.109621}}.

\bibitem{Rod21-jnls}
S.~S. Rodrigues.
\newblock Semiglobal oblique projection exponential dynamical observers for
  nonautonomous semilinear parabolic-like equations.
\newblock {\em J. Nonlin. Sci.}, 31:100, 2021.
\newblock \href {http://dx.doi.org/10.1007/s00332-021-09756-8}
  {\path{doi:10.1007/s00332-021-09756-8}}.

\bibitem{RodSeifu22-arx}
S.~S. Rodrigues and D.~Seifu.
\newblock Feedback semiglobal stabilization to trajectories for the
  {K}uramoto--{S}ivashinsky equation.
\newblock ArXiv:2205.13967v1 [math.OC], 2022.
\newblock \href {http://dx.doi.org/10.48550/arXiv.2205.13967}
  {\path{doi:10.48550/arXiv.2205.13967}}.

\bibitem{RodSturm20}
S.~S. Rodrigues and K.~Sturm.
\newblock On the explicit feedback stabilisation of one-dimensional linear
  nonautonomous parabolic equations via oblique projections.
\newblock {\em IMA J. Math. Control Inform.}, 37(1):175--207, 2020.
\newblock \href {http://dx.doi.org/10.1093/imamci/dny045}
  {\path{doi:10.1093/imamci/dny045}}.

\bibitem{SmyshlyaevKrstic05}
A.~Smyshlyaev and M.~Krstic.
\newblock Backstepping observers for a class of parabolic pdes.
\newblock {\em Systems Control Lett.}, 54(7):613--625, 2005.
\newblock \href {http://dx.doi.org/10.1016/j.sysconle.2004.11.001}
  {\path{doi:10.1016/j.sysconle.2004.11.001}}.

\bibitem{Temam01}
R.~Temam.
\newblock {\em {N}avier--{S}tokes Equations: Theory and Numerical Analysis}.
\newblock AMS Chelsea Publishing, Providence, RI, {reprint of the 1984}
  edition, 2001.
\newblock Date of access July 12, 2018.
\newblock URL: \url{https://bookstore.ams.org/chel-343-h}.

\bibitem{Wu74}
M.Y. Wu.
\newblock A note on stability of linear time-varying systems.
\newblock {\em IEEE Trans. Automat. Control}, 19(2):162, 1974.
\newblock \href {http://dx.doi.org/10.1109/TAC.1974.1100529}
  {\path{doi:10.1109/TAC.1974.1100529}}.

\end{thebibliography}

\end{document}